\theoremstyle{plain}
\newtheorem{prop}{Proposition}[section]
\newtheorem{thm}[prop]{Theorem}
\newtheorem{cor}[prop]{Corollary}
\newtheorem{lem}[prop]{Lemma}
\theoremstyle{definition}
\newtheorem{dfn}[prop]{Definition}
\newtheorem{example}[prop]{Example}
\newtheorem{lab}[prop]{}
\newcommand{\isoto}{\overset{\sim}{\to}}
\newcommand{\A}{{\mathbb{A}}}
\newcommand{\R}{{\mathbb{R}}}
\newcommand{\m}{{\mathfrak{m}}}
\newcommand{\calA}{{\mathcal{A}}}
\newcommand{\calI}{{\mathcal{I}}}
\newcommand{\calO}{{\mathcal{O}}}
\newcommand{\calT}{{\mathcal{T}}}
\newcommand{\scrD}{{\mathscr{D}}}
\newcommand{\scrO}{{\mathscr{O}}}
\DeclareMathOperator{\rk}{rk}
\DeclareMathOperator{\sosx}{sosx}
\DeclareMathOperator{\sxdeg}{sxdeg}
\DeclareMathOperator{\spn}{span}
\newcommand{\id}{\mathrm{id}}
\newcommand{\ord}{{\rm ord}}
\newcommand{\Sym}{\mathrm{Sym}}
\newcommand{\itLambda}{{\mathit{\Lambda}}}
\newcommand{\itOmega}{{\mathit{\Omega}}}
\newcommand{\bfmu}{{\boldsymbol{\mu}}}
\newcommand{\ula}{{\underline a}}
\newcommand{\ulb}{{\underline b}}
\newcommand{\ulm}{{\underline m}}
\newcommand{\ulp}{{\underline p}}
\newcommand{\ulq}{{\underline q}}
\newcommand{\ult}{{\underline t}}
\newcommand{\ulxi}{{\underline\xi}}
\newcommand{\uleta}{{\underline\eta}}
\newcommand{\comp}{\mathbin{\scriptstyle\circ}}
\renewcommand{\emptyset}{\varnothing}
\renewcommand{\setminus}{\smallsetminus}
\newcommand{\ol}{\overline}
\newcommand{\wh}[1]{\widehat{#1}}
\newcommand{\all}{\forall\,}
\newcommand{\ex}{\exists\,}
\newcommand{\To}{\Rightarrow}
\renewcommand{\subset}{\subseteq}
\renewcommand{\supset}{\supseteq}
\newcommand{\idl}[1]{\langle #1\rangle}
\newcommand{\sa}{semialgebraic}
\begin{document}

\title
{Convex hulls of curves in $\boldsymbol n$-space}

\author{Claus Scheiderer}
\address
  {Fachbereich Mathematik und Statistik \\
  Universit\"at Konstanz \\
  78457 Konstanz \\
  Germany}
\email
  {claus.scheiderer@uni-konstanz.de}

\begin{abstract}
Let $K\subset\R^n$ be a convex \sa\ set. The semidefinite extension
degree $\sxdeg(K)$ of $K$ is the smallest number $d$ such that $K$ is
a linear image of an intersection of finitely many spectrahedra, each
of which is described by a linear matrix inequality of size $\le d$.
This invariant can be considered to be a measure for the intrinsic
complexity of semidefinite optimization over the set $K$. For an
arbitrary \sa\ set $S\subset\R^n$ of dimension one, our main result
states that the closed convex hull $K$ of $S$ satisfies
$\sxdeg(K)\le1+\lfloor\frac n2\rfloor$. This bound is best possible
in several ways. Before, the result was known for $n=2$, and also for
general $n$ in the case when $S$ is a monomial curve.
\end{abstract}

\date\today
\maketitle

%-------------------------------------------------------------------%

\section*{Introduction}

Semidefinite programming (SDP) is the task of optimizing a linear
function over the solution set of a linear matrix inequality (LMI)
$$A_0+\sum_{i=1}^nx_iA_i\>\succeq\>0$$
where $A_0,\dots,A_n$ are real symmetric matrices of some size
and $A\succeq0$ means that $A$ is positive semidefinite. Under mild
conditions, semidefinite programs can be solved in polynomial time,
up to any prescribed accuracy. Thanks to its enormous expressive
power, semidefinite programming has numerous applications from a wide
range of areas. See \cite{av} for detailed background on SDP.

Solution sets of linear matrix inequalities are called spectrahedra.
The feasible sets of semidefinite programming are therefore
spectrahedra and, more generally, linear images of
spectrahedra (aka spectrahedral shadows). The performance of
numerical SDP solvers is known to be heavily influenced by the
matrix size of the LMI. On the other hand, it is often possible in
concrete examples to represent a given convex set $K$ by a
combination of several LMIs of small size~$d$. Practical
experience shows that this size $d$ is far more critical for the
running time than the number of the LMIs. Motivated by these
observations, Averkov \cite{av} introduced an invariant of convex
\sa\ sets $K\subset\R^n$ that captures this essential bit of the
intrinsic complexity of semidefinite optimization over $K$:
The \emph{semidefinite extension degree} of $K$, denoted
$\sxdeg(K)$, is the smallest number $d$ such that $K$ can be written
as a linear image of a finite intersection of spectrahedra, all of
which are described by LMIs of size $\le d$.
So $\sxdeg(K)\le d$ means that $K$ can be represented in the form
$$K\>=\>\bigl\{x\in\R^n\colon\ex y\in\R^m\ A(x,y)\succeq0\bigr\}$$
for some $m$, where $A(x,y)=A_0+\sum_{i=1}^nx_iA_i+\sum_{j=1}^m
y_jB_j$ is a symmetric linear matrix polynomial of block-diagonal
structure with all blocks of size at most~$d$. For example,
$\sxdeg(K)\le1$ if and only if $K$ is a polyhedron, and
$\sxdeg(K)\le2$ if and only if $K$ is second-order cone
representable. It is this invariant that we are going to study.

Let $n\ge1$, and let $S$ be an arbitrary \sa\ set in $\R^n$ of
dimension one. Our main result (Theorem \ref{mainthm}) states that
the closed convex hull $K$ of $S$ satisfies
$\sxdeg(K)\le1+\lfloor\frac n2\rfloor$. Before, this was known for
$n=2$ \cite{sch:sxdeg}, and also for general $n$ in the case where
$S$ is a curve parametrized by monomials \cite{avsch}. In the general
case, it was known from \cite{sch:curv} that $\sxdeg(K)$ is finite,
or in other words, that $K$ is a spectrahedral shadow.

The upper bound in Theorem \ref{mainthm} is best possible: If $n=2k$
is even and $K$ is the closed convex hull of the rational normal
curve $\{(t,t^2,\dots,t^n)\colon t\in\R\}$ of degree~$n$, then
$\sxdeg(K)=1+\frac n2$, as follows from \cite[Cor.~2.3]{av}. In this
particular case, $K$ happens to be a spectrahedron, and an explicit
representation of minimal matrix size $k+1$ is given by the LMI
$$\begin{pmatrix}1&x_1&\cdots&x_k\\x_1&x_2&\cdots&x_{k+1}\\
\vdots&\vdots&&\vdots\\x_k&x_{k+1}&\cdots&x_{2k}\end{pmatrix}
\ \succeq\ 0.$$
Our theorem cannot be extended to convex hulls of sets of dimension
greater than one. Indeed, such convex hulls need not be spectrahedral
shadows in general. Explicit examples in $\R^n$ are currently known
for $n\ge11$, and can be found in \cite{sch:hn} or in
\cite[Sect.~8.7]{Sch}.

For monomial curves, the proof of our main result in \cite{avsch}
leads to an explicit semidefinite representation of the convex hull.
This is not so in the general situation considered here. The
basic approach is the same as for monomial curves, but the technical
details are getting considerably more involved in the general case.
To establish the upper bound for $\sxdeg(K)$ we dualize, meaning
that, instead of the convex hull $K$, we study the convex cone of
linear polynomials that are non-negative on $S$. The main tool for
our proof is an algebraic characterization of $\sxdeg(K)$ that uses
the concept of \emph{tensor evaluation}. For precise details we
refer to \ref{tenseval} and Theorem \ref{critenseval} below.

In the (explicit) case of monomial curves, a key role was played in
\cite{avsch} by Schur polynomials and by Jacobi's bialternant
identity. We would like to point out that the same is true in the
general case considered here. Speaking very loosely, the problem can
be localized to some extent. Locally around a given point, Schur
polynomials provide a lowest degree approximation of the problem.
A~large part of the effort then consists in showing that this
approximation dominates the picture in a sufficiently small
neighborhood.

The paper is organized as follows. After stating the main result and
its dual version in Section~\ref{mainreslts}, we apply a number of
reduction steps in Section \ref{firstreducts}. Here we also recall
the concept of tensor evaluation. Sections~\ref{schurprep} and
\ref{extprep} are of preparatory character, dealing respectively with
Schur polynomials and with extreme rays in cones of non-negative
polynomials. A very rough general outline of the proof is given in
Section~\ref{outline}. Sections \ref{repdet1} and \ref{repdet2} set
up the technical machinery needed for the proof proper, while the
actual proof is given in Section~\ref{sectpfmainthm}.

%-------------------------------------------------------------------%

\section{Main result}\label{mainreslts}%

\begin{lab}\label{basixconv}%
Before stating the main theorem we briefly recall a few basic notions
from convexity. Let $C$ be a convex cone in a finite-dimensional
$\R$-vector space $V$, meaning that $C\ne\emptyset$, $C+C\subset C$
and $aC\subset C$ for every real number $a\ge0$. The cone $C$ is said
to be pointed if $C\cap(-C)=\{0\}$. A convex subset $F\ne\emptyset$
of $C$ is a \emph{face} of $C$ if $x,\,y\in C$ and $x+y\in F$ imply
$x,\,y\in F$. If $0\ne x\in C$ is such that the half-line
$F=\{ax\colon a\ge0\}$ is a face of $C$ then $F$ is called an
\emph{extreme ray} of~$C$. Every closed and pointed convex cone is
the Minkowski sum of its extreme rays.
For every $x\in C$ there is a unique smallest face of $C$, denoted
$F_x$, that contains $x$. We call $F_x$ the \emph{supporting face}
of $x$ (in $C$). See \cite[Sect.~8.1]{Sch} for these facts and for
more general background.
\end{lab}

\begin{dfn}\label{dfnsxdeg}%
(See \cite{av})
Let $K$ be a convex \sa\ set in $\R^n$. The \emph{semidefinite
extension degree} $\sxdeg(K)$ of $K$ is the smallest integer $d\ge0$
such that $K$ has a representation
$$K\>=\>\Bigl\{x\in\R^n\colon\ex y\in\R^m\ \all\nu=1,\dots,r\
A_\nu+\sum_{i=1}^nx_iB_{\nu i}+\sum_{j=1}^my_jC_{\nu j}\succeq0
\Bigr\}$$
with $r,\,m\ge1$ and with $A_\nu$, $B_{\nu i}$, $C_{\nu i}$ real
symmetric matrices of the same size $d_\nu\le d$, for all
$\nu=1,\dots,r$. We put $\sxdeg(K)=\infty$ if there is no such
integer $d\ge0$.
\end{dfn}

If $K$ is an affine space then $\sxdeg(K)=0$ (this may be considered
part of the definition), otherwise $\sxdeg(K)\ge1$. It follows
directly that $\sxdeg(K)\le1$ if and only if $K$ is a polyhedron.
Moreover it is not hard to see (e.g.\ \cite[Example 1.5]{sch:sxdeg})
that $\sxdeg(K)\le2$ if and only if $K$ is second-order cone
representable. The main result of this paper is the following:

\begin{thm}\label{mainthm}%
Let $S\subset\R^n$ be a \sa\ set of dimension one, let
$K\subset\R^n$ be the closed convex hull of $S$. Then
$\sxdeg(K)\le1+\lfloor\frac n2\rfloor$.
\end{thm}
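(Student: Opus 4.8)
The plan is to reduce the problem to a dual statement about cones of nonnegative linear polynomials and then to verify that dual statement locally, using Schur polynomials to control the local picture. First I would apply a sequence of standard reductions: replace $S$ by its Zariski closure (a real algebraic curve), then normalize and pass to the smooth projective model, so that $S$ is effectively covered by finitely many smooth one-parameter pieces; bound the behaviour at infinity separately. Since $\sxdeg$ behaves well under taking finite intersections, linear images and preimages, it suffices to treat each local branch. Dualizing, the closed convex hull $K$ corresponds to the cone $C=\{\ell\colon \ell|_S\ge0\}$ of affine-linear polynomials nonnegative on $S$; by the tensor-evaluation criterion (Theorem \ref{critenseval}), $\sxdeg(K)\le d$ follows once one produces, for each extreme ray $\R_{\ge0}\ell$ of $C$, a suitable factorization of $\ell$ through a symmetric tensor of bounded rank that is compatible with the supporting face $F_\ell$.

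Next I would analyze the extreme rays. An extreme linear polynomial $\ell$ nonnegative on $S$ vanishes on $S$ to even order at the points of its zero set; after localizing at such a zero point $p$ and choosing a local parameter $t$, the function $\ell$ looks like $c\,t^{2k}+\text{higher order}$ on the branch through $p$. The key algebraic input, exactly as in \cite{avsch}, is Jacobi's bialternant identity for Schur polynomials: a Schur polynomial $s_\lambda$ in the appropriate variables furnishes, as a quotient of Vandermonde-type determinants, a universal lowest-degree sum-of-squares/Hankel representation of the model monomial situation $t\mapsto(t,t^2,\dots,t^n)$ truncated at the relevant order. One shows that the Hankel (moment) matrix of size $1+\lfloor n/2\rfloor$ built from the coordinate functions, evaluated along the branch, has the right rank and signature so that its Schur complement reproduces $\ell$ up to lower-order terms. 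The matrix size $1+\lfloor n/2\rfloor$ is forced because a generic point on a curve in $\R^n$ imposes, via its osculating flag, roughly $n/2$ independent second-order conditions, matching the Hankel matrix $(x_{i+j})_{0\le i,j\le\lfloor n/2\rfloor}$.

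I would then have to upgrade this local/lowest-order statement to a genuine representation valid on a full neighborhood of $p$ in $S$. This is the step I expect to be the main obstacle: the Schur-polynomial model only dominates the leading term, and one must show that the perturbation coming from higher-order terms and from the global geometry of $S$ can be absorbed without increasing the matrix size. Concretely, one writes the candidate block-diagonal LMI $A(x,y)=A_0+\sum x_iB_i+\sum y_jC_j$ with each block of size $\le 1+\lfloor n/2\rfloor$, where one block is the Hankel block detecting the osculating behaviour and the remaining blocks handle the finitely many exceptional points and the part of $S$ away from $p$; then one verifies that the projection of its feasible set is exactly $K$ by checking that every supporting hyperplane of $K$ arises from a psd certificate through the tensor-evaluation mechanism. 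Patching the finitely many local representations into one global representation (taking an intersection over the finite cover, and a linear image to eliminate the auxiliary coordinates) completes the argument. The degenerate possibilities — components of $S$ that are affine lines or lie in a proper affine subspace, and contributions at infinity — are handled by induction on $n$, since there $K$ lives in a lower-dimensional space and $\lfloor n/2\rfloor$ only drops.
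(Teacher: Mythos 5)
Your high-level plan — dualize to the cone of nonnegative linear forms, cover $S$ by small intervals after normalizing the curve, invoke the tensor-evaluation criterion, and use Schur polynomials via Jacobi's bialternant identity as the lowest-order model — coincides with the paper's strategy. But there are two substantive problems. First, your third paragraph proposes to write down an explicit block-diagonal LMI (a Hankel block for the osculating behaviour plus blocks for exceptional points) and verify that its shadow is $K$. The paper explicitly does \emph{not} do this, and states that unlike the monomial case no explicit representation comes out of the argument; the whole point of the tensor-evaluation criterion is that one never exhibits an LMI. One only has to bound $\sosx f^\otimes(\eta)$ in $R\otimes R$ for every extreme $f\in(P_{V,S})_R$ over every real closed $R\supset\R$ — note the quantification over arbitrary real closed extensions, which your proposal ignores and which rules out the naive ``choose a local parameter $t$ and write $\ell=ct^{2k}+\cdots$'' picture: as the paper points out, the required homomorphism from (Nash) power series into $R\otimes R$ simply does not exist.

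Second, the step you yourself flag as ``the main obstacle'' — upgrading the leading-order Schur model to a statement valid on a whole neighborhood — is the entire content of the proof, and your proposal supplies no mechanism for it. The paper's mechanism has two essential ingredients you are missing. (i) A structural fact about extreme rays: on a sufficiently small interval $S$ on the positive side of $\xi$, an extreme $f$ with $f(\xi),f(\xi')>0$ has \emph{exactly} $n$ zeros in $S_R$, all of even multiplicity (so $n$ is even), proved by a nonvanishing-determinant/uniqueness argument; consequently $f$ is, up to a scalar, the determinant $\det Z_\ulb(\ulp,\ulxi)$ of an explicit confluent Vandermonde-type matrix. (ii) An identity in the $(n+1)$-fold tensor product $A^{\otimes(n+1)}$: the determinant $\det(\varphi_i(p_j))$ is written as $\frac1{|\itLambda|}\sum_{w,\alpha}t^\alpha(1+g_{w,\alpha})\delta_w$ over generators $\delta_w$ of the (non-principal) ideal of the generalized diagonal, with the coefficients congruent to the Schur polynomial $\sigma_{\ulm}(t_0,\dots,t_n)$ modulo higher-order terms; after the ``Taylor process'' and specialization to $R\otimes R$, each summand of $f^\otimes(\xi_0)$ becomes a positive unit times the square of a tensor of rank at most $1+\frac n2$, the critical factor being $\prod_j(w(\xi_0)\otimes1-1\otimes w(\xi_j))^{b_j}$ with all $b_j$ even. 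Without the exact zero count, the determinantal formula for $f$, and the tensor-product identity controlling the error terms uniformly on a small interval, ``absorbing the perturbation without increasing the matrix size'' remains an unfulfilled promise rather than an argument.
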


Here, of course, $\lfloor\frac n2\rfloor$ denotes the largest
integer $\le\frac n2$. In particular:

\begin{cor}\label{dim3socr}%
The closed convex hull of any one-dimensional \sa\ set in $\R^3$
is second-order cone representable.
\qed
\end{cor}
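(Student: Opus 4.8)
The plan is to derive the corollary directly from Theorem \ref{mainthm} by specializing to $n=3$ and combining it with the characterization of $\sxdeg\le2$ recalled just before the theorem. First I would let $S\subset\R^3$ be an arbitrary \sa\ set of dimension one and let $K$ be its closed convex hull. Applying Theorem \ref{mainthm} with $n=3$ yields $\sxdeg(K)\le 1+\lfloor\frac32\rfloor=1+1=2$. Then I would invoke the equivalence already stated in the paragraph preceding Theorem \ref{mainthm} (with reference to \cite[Example 1.5]{sch:sxdeg}): a convex \sa\ set has semidefinite extension degree at most~$2$ if and only if it is second-order cone representable. Since $K$ is a convex \sa\ set --- being the closed convex hull of a \sa\ set, hence itself \sa\ by the Tarski--Seidenberg principle --- this equivalence applies and gives that $K$ is second-order cone representable.

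The only point requiring a word of care is that $K$ is genuinely a convex \sa\ set, so that Definition \ref{dfnsxdeg} and the $\sxdeg\le2$ characterization are applicable to it; this is immediate from quantifier elimination over the reals, and in any case it is implicitly part of the hypotheses under which Theorem \ref{mainthm} is stated. I do not anticipate any real obstacle here: the corollary is a direct numerical specialization of the main theorem together with a previously recorded fact, which is precisely why the statement in the excerpt is followed by \qed rather than a proof. The entire argument is the computation $1+\lfloor 3/2\rfloor=2$ plus the citation.
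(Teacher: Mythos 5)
Your argument is exactly the one the paper intends: Theorem \ref{mainthm} with $n=3$ gives $\sxdeg(K)\le1+\lfloor\tfrac32\rfloor=2$, and the previously recorded equivalence $\sxdeg(K)\le2\iff K$ is second-order cone representable finishes it, which is why the corollary carries only a \qed. Your side remark that $K$ is \sa\ by Tarski--Seidenberg is a correct and harmless addition.
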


Let $S\subset\R^n$ be a subset, let $K$ be its closed convex hull and
let
$$P_S\>=\>\{f\in\R[x]_{\le1}\colon f|_S\ge0\},$$
the set of linear polynomials in $x=(x_1,\dots,x_n)$ that are
non-negative on $S$. Recall that the homogenization $K^h$ of $K$ is
the closure of $\{(t,tu)\colon u\in K$, $t\ge0\}$ in $\R\times\R^n$
\cite[Prop.\ 8.1.19]{Sch}. The set $P_S$ is a closed convex cone in
$\R[x]_{\le1}$, and is linearly isomorphic to the dual convex cone of
$K^h$ in a natural way \cite[Example 8.1.23]{Sch}.
By \cite[Cor.~1.9]{sch:sxdeg} we have $\sxdeg(K)=\sxdeg(P_S)$.
So Theorem \ref{mainthm} follows from the following theorem:

\begin{thm}\label{mainthm2}%
Let $C$ be an affine algebraic curve over $\R$, let $S\subset C(\R)$
be a \sa\ set, and let $V$ be a linear subspace of the affine
coordinate ring $\R[C]$ of $C$ with $\dim(V)=n+1$. Then the closed
convex cone
$$P_{V,S}\>:=\>\{f\in V\colon f|_S\ge0\}$$
in $V$ satisfies $\sxdeg(P_{V,S})\le1+\lfloor\frac n2\rfloor$.
\end{thm}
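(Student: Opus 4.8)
The plan is to reduce Theorem~\ref{mainthm2} to a statement about the supporting faces $F_f$ of the cone $P_{V,S}$ and then control, face by face, how a neighbourhood of $f$ inside $P_{V,S}$ can be carved out by small LMIs. By the characterization announced in Theorem~\ref{critenseval}, bounding $\sxdeg(P_{V,S})$ by $1+\lfloor n/2\rfloor$ amounts to producing, for each $f\in P_{V,S}$, a \emph{tensor evaluation} certificate of that size witnessing membership locally around $f$; since $\dim S\le 1$, the closed convex cone $P_{V,S}$ has only finitely many ``types'' of supporting faces up to the combinatorics of the zero set $Z(f)\cap S$, so finitely many LMIs of the required size will suffice to assemble a global representation. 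First I would run the reduction steps of Section~\ref{firstreducts}: normalize the curve $C$, replace $S$ by a finite union of (pieces of) branches, and reduce to the case where $V$ separates points, so that the problem becomes local at each real point $p$ of $\ol C$ (including points at infinity) lying in the closure of $S$.

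The heart of the argument is the local analysis at a single point $p$. Here I would pick a local parameter $t$ at $p$ on the normalization and expand every $g\in V$ as a power series in $t$; the evaluation map $V\to\R[[t]]$, composed with truncation, gives a filtration of $V$ whose associated graded pieces are controlled by the \emph{orders} that elements of $V$ can vanish to at $p$. The key combinatorial input — this is where Schur polynomials enter, exactly as in \cite{avsch} — is that among the $\binom{n+1}{2}$-or-so relevant pairs of orders, the lowest-degree obstruction to $f+\varepsilon g\in P_{V,S}$ is governed by a Hankel-type quadratic form of size $1+\lfloor n/2\rfloor$, and Jacobi's bialternant identity identifies its relevant minors with Schur polynomials in the parameters. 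Concretely: a linear form non-negative on a one-dimensional piece near $p$, vanishing to order $2k$ there, looks like a sum of squares of forms of order $\le k$ up to higher-order terms, and this ``sum of $\le1+\lfloor n/2\rfloor$ squares'' structure is what yields an LMI block of size $1+\lfloor n/2\rfloor$. I would make this precise by building, in Sections~\ref{repdet1}--\ref{repdet2}, an explicit matrix polynomial $A(f,\cdot)$ of the right block sizes whose feasibility encodes ``$f$ lies in the supporting face $F_f$ and a small neighbourhood of it inside $P_{V,S}$ is described correctly,'' verifying the tensor-evaluation criterion.

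The main obstacle I anticipate is precisely the step flagged in the introduction: showing that the Schur-polynomial approximation \emph{dominates} in a full neighbourhood, not just to lowest order. The lowest-order term of the obstruction form is a known non-degenerate Hankel form of the correct size, but the higher-order correction terms could, a priori, force a larger matrix size; one must prove a quantitative positivity statement showing the leading term controls the sign of $f+\varepsilon g$ for all small $\varepsilon$ and all $g$ in a complementary subspace. This requires a careful uniform estimate — essentially a Positivstellensatz-with-size-control — and handling the interaction of \emph{several} such local pictures (at the finitely many boundary points of $S$, and at points at infinity of $\ol C$) simultaneously, since the global face $F_f$ is the intersection of the local conditions. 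A secondary technical nuisance is the bookkeeping when $p$ is a singular point of $C$ or when several branches of $S$ pass through $p$: one then has a product of local rings $\prod_i\R[[t_i]]$ and must check that the order-statistics argument still delivers blocks of size $\le1+\lfloor n/2\rfloor$ rather than accumulating. Once the local certificates are in hand with uniform size, gluing them into a single block-diagonal LMI and invoking Theorem~\ref{critenseval} finishes the proof.
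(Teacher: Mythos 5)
Your outline reproduces the roadmap that the introduction already gives (dualize, use the tensor-evaluation criterion, localize at a point, let Schur polynomials control the lowest-order terms), but at every point where the actual proof has to do work, your plan either stops or heads in a direction the paper explicitly rules out. Three concrete gaps. First, you never isolate the structure of the extreme rays: the proof hinges on showing (via Proposition \ref{extstrahl}, Corollary \ref{kor2extstrahl} and Proposition \ref{genaunzeros}) that for a sufficiently small interval $S$ an extreme ray generator $f$ of $(P_{V,S})_R$ has \emph{exactly} $n$ zeros in $S_R$, all of even multiplicity, and is therefore (up to scaling) the determinant $\det Z_\ulb(\ulp,\ulxi)$ of Proposition \ref{simplema}. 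The evenness of the multiplicities $b_j$ with $\sum b_j=n$ is precisely where the bound $1+\frac n2$ comes from: the critical factor $\prod_j(w(\xi_0)\otimes1-1\otimes w(\xi_j))^{b_j}$ is the square of a single tensor of rank $\le1+\frac n2$. Your ``Hankel-type quadratic form of size $1+\lfloor n/2\rfloor$ obstructing $f+\varepsilon g$'' is not what is being bounded; the criterion of Theorem \ref{critenseval} asks for $\sosx f^\otimes(\eta)\le d$ for extreme rays over \emph{arbitrary} real closed extensions $R\supset\R$ and all $\eta\in S_R$ --- a statement about tensor rank in $R\otimes R$, not about perturbations of $f$ inside the cone, and the assembly into finitely many LMIs is already done by that theorem.

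Second, the local analysis you propose --- expand elements of $V$ in $\R[[t]]$ and control things by the order filtration --- is essentially the na\"ive approach that Section \ref{outline} explains \emph{cannot work}: the ring homomorphism from (Nash) power series in $x_0,\dots,x_r$ to $R\otimes R$ that such an argument would need does not exist. The paper's substitute is the machinery of Sections \ref{repdet1}--\ref{repdet2}: one works in $A^{\otimes(n+1)}$ with the invertible ideal $\calI$ of the generalized diagonal, proves the identity $\det M(\ulp)=\frac1{|\itLambda|}\sum_{w,\alpha}t^\alpha(1+g_{w,\alpha})\delta_w$ with $g_{w,\alpha}$ vanishing on the diagonal, and then applies the Taylor process to match the multiplicities $\ulb$. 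Your proposal contains no substitute for this step. Third, your ``domination'' problem is not resolved by a quantitative positivity estimate: it is resolved by choosing $S$ so small that $1+g_{w,\alpha}>0$ on $S^{n+1}$ and so that $t$ and the $w'$ are non-negative on $S$ (conditions (S0)--(S3)), after which \emph{every} summand of \eqref{identsummazing} evaluates to a product of rank-one non-negative tensors times the one square of rank $\le1+\frac n2$. Finally, the singular points and points at infinity you worry about are eliminated at the outset by the reductions of Section \ref{firstreducts} (normalization, and passing to a different affine model to compactify a half-line), so they never enter the local analysis.
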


\noindent
\emph{Proof} of \ref{mainthm2} $\To$ \ref{mainthm}:
Assuming Theorem \ref{mainthm2}, let $S\subset\R^n$ be a \sa\ set
of dimension one. We may assume that $S$ is not contained in an
affine hyperplane of $\R^n$. Let $C$ be the Zariski closure of $S$
and let $V\subset\R[C]$ be the image of $\R[x_1,\dots,x_n]_{\le1}$
in $\R[C]$. Then $\dim(V)=n+1$ and $P_{V,S}=P_S$, and so
$\sxdeg(K)=\sxdeg(P_S)=\sxdeg(P_{V,S})$.
\qed

%-------------------------------------------------------------------%

\section{First reduction steps}\label{firstreducts}%

We start by making a couple of first reductions towards a proof of
Theorem \ref{mainthm2}. On one hand they concern the curve $C$ and
the set $S$. On the other we are going to reformulate the task of
bounding $\sxdeg(K)$ in a way that is more abstract and algebraic,
but also more manageable.

\begin{lab}\label{1streducts}%
The set $S$ can be
assumed to be closed.
It is enough to cover $S$ by finitely many \sa\ sets $S_i$ and to
prove the theorem for each of them. Indeed, since
$P_{V,S}=\bigcap_iP_{V,S_i}$ we have $\sxdeg(P_{V,S})\le\max_i
\sxdeg(P_{V,S_i})$ \cite[Lemma 1.4(d)]{sch:sxdeg}. In this way
we may assume that the Zariski closure $C\subset\A^n$ of $S$ is an
irreducible curve, and that $S$ does not have any isolated point.
We may in fact assume that the curve $C$ is non-singular, since
otherwise we replace $C$ by its normalization $C'$ and
$S\subset C(\R)$ by its preimage in $C'(\R)$. Since $S$ does not
have isolated points, $S$ is contained in the image of the natural
map $C'(\R)\to C(\R)$.
\end{lab}

\begin{lab}\label{reductnonsing}%
So let the curve $C$ be irreducible and non-singular. By the previous
discussion we assume that $S$ is closed in $C(\R)$, and is
homeomorphic either to a compact non-degenerate interval or to a
closed half-line.
In fact it is enough to consider compact intervals. Indeed,
assuming that $S$ is homeomorphic to $\left[0,\infty\right[$,
let $\ol C$ be the non-singular projective model of $C$ and let
$u\in\ol C(\R)\setminus C(\R)$ be the point in the closure of $S$.
Choose a point $v\in C(\R)\setminus S$ and let
$C'=\ol C\setminus\{v\}$. Then $C'$ is an affine curve,
and the
subset $S':=S\cup\{u\}$ of $C'(\R)$ is homeomorphic to $[0,1]$. If
$0\ne p\in\R[C']$ is chosen such that $p$ vanishes in all (real and
nonreal) points of $\ol C\setminus C$, and if $k\ge1$ is large
enough, the subspace $V':=p^{2k}V$ of the function field
$\R(C)=\R(C')$ is contained in $\R[C']$.
Multiplication by $p^{2k}$ is a vector space isomorphism $V\to V'$
under which the cone $P_{V,S}$ gets identified with the cone
$P_{V',S'}$.
So it suffices to consider the latter cone.
\end{lab}

Summarizing the reduction steps discussed so far, we have:

\begin{lem}\label{reductpart1}%
To prove Theorem \ref{mainthm2}, it suffices to consider the case
where the curve $C$ is irreducible and non-singular and where the set
$S\subset C(\R)$ is homeomorphic to $[0,1]$.
\qed
\end{lem}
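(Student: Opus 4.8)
Since the reductions needed for Lemma \ref{reductpart1} have essentially been carried out already in \ref{1streducts} and \ref{reductnonsing}, the proof is a matter of assembling them in the right order and checking that no step loses information. The plan is as follows. First I would reduce to $S$ closed: a linear function non-negative on $S$ is, by continuity, non-negative on the closure $\ol S$ of $S$ in $C(\R)$, so $P_{V,S}=P_{V,\ol S}$ and we may replace $S$ by $\ol S$.

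Next I would invoke the localization principle from \ref{1streducts}: for any finite cover $S=\bigcup_iS_i$ by \sa\ sets one has $P_{V,S}=\bigcap_iP_{V,S_i}$, hence $\sxdeg(P_{V,S})\le\max_i\sxdeg(P_{V,S_i})$ by \cite[Lemma 1.4(d)]{sch:sxdeg}; so it suffices to bound $\sxdeg$ for each piece. A closed one-dimensional \sa\ subset of $C(\R)$ is a finite union of points and arcs. The point-pieces yield polyhedral cones (a single linear inequality each), so their $\sxdeg$ is $\le1\le1+\lfloor\frac n2\rfloor$ and they may be ignored; for each arc-piece we may assume its Zariski closure $C$ is an irreducible curve and that the piece has no isolated point. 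Replacing $C$ by its normalization $C'$ --- under which $V\subset\R[C]\subset\R[C']$, while the preimage $S'\subset C'(\R)$ of $S$ satisfies $P_{V,S}=P_{V,S'}$ because $S$, having no isolated points, stays dense in $S$ even after deleting the finitely many non-normal points --- we may assume $C$ non-singular.

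At this stage the structure of one-dimensional \sa\ subsets of a non-singular real curve gives that $S$ is homeomorphic to a compact non-degenerate interval or to a closed half-line, and only the half-line case remains. Here I would follow \ref{reductnonsing}: pass to the non-singular projective model $\ol C$ of $C$, let $u\in\ol C(\R)\setminus C(\R)$ be the ideal endpoint of $S$, choose $v\in C(\R)\setminus S$ and set $C'=\ol C\setminus\{v\}$ (an affine curve) so that $S':=S\cup\{u\}$ is a compact arc in $C'(\R)$, pick $0\ne p\in\R[C']$ vanishing at every point of $\ol C\setminus C$, and take $k\gg0$ with $p^{2k}V\subset\R[C']$; then multiplication by $p^{2k}$ is an $\R$-linear isomorphism $V\to V':=p^{2k}V$ carrying $P_{V,S}$ onto $P_{V',S'}$. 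Combining everything we are reduced to $C$ irreducible and non-singular with $S\cong[0,1]$, as claimed.

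The lemma itself is routine; the only genuinely delicate point in the chain is the half-line reduction. One must check that for $k$ large enough $p^{2k}V$ really lands inside the coordinate ring $\R[C']$ --- i.e.\ that $p$ vanishes to high enough order at all (real and nonreal) points of $\ol C\setminus C$ to cancel the poles there of the elements of $V$, noting that $p$ is regular at $v$ so no new pole appears --- and that the map $V\to V'$ identifies $P_{V,S}$ with $P_{V',S'}$ exactly: since $p^{2k}$ is a square that is nonvanishing on $S$ (as $p$ vanishes only off $C\supset S$) one has $f|_S\ge0\iff(p^{2k}f)|_S\ge0$, and adjoining the boundary point $u$ changes nothing because $p^{2k}f$ is continuous on $C'(\R)$ and $u$ lies in the closure of $S$. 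The covering step deserves a similar moment of care: arranging $S$ to have no isolated point before normalizing is exactly what guarantees that passing to irreducible Zariski closures and then to the normalization loses no point of $S$.
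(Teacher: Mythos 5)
Your proposal is correct and follows essentially the same route as the paper's own reductions in \ref{1streducts} and \ref{reductnonsing}: pass to the closure, use a finite covering together with $\sxdeg(P_{V,S})\le\max_i\sxdeg(P_{V,S_i})$, normalize (using the absence of isolated points to see that no point of $S$ is lost), and apply the $p^{2k}$ trick to compactify the half-line case. The only small overstatement is your parenthetical claim that $p$ vanishes only on $\ol C\setminus C$ --- the construction does not guarantee that $p$ has no further zeros --- but the identification of $P_{V,S}$ with $P_{V',S'}$ survives because $p^{2k}\ge0$ everywhere and the zero set of $p$ meets $S$ in at most finitely many points, so the density and continuity argument you already use elsewhere gives the reverse implication.
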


\begin{lab}\label{erklinterval}%
Let $\xi\in C(\R)$ be an $\R$-point. The order of vanishing of a
rational function $f\in\R(C)$ in the point $\xi$ will be denoted
$\ord_\xi(f)$. A \emph{local orientation} of $C(\R)$ at $\xi$ is an
equivalence class of local uniformizers at $\xi$ (elements
$u\in\R[C]$ with $\ord_\xi(u)=1$), where $u_1$ and $u_2$ are said to
be equivalent if $\frac{u_1}{u_2}(\xi)>0$. Let such a local
orientation at $\xi$ be given. By a \emph{closed interval on the
positive side} of $\xi$, we mean a compact connected set
$J\subset C(\R)$ with $\xi\in J$ and $|J|>1$ such that $u|_J\ge0$ for
some local uniformizer $u$ at $\xi$ that represents the given
orientation. Such $J$ has boundary points $\xi$ and $\xi'\ne\xi$, and
we'll simply write $J=[\xi,\xi']$ (tacitly assuming the given local
orientation to be understood).
\end{lab}

With this terminology we may reduce the task of proving Theorem
\ref{mainthm2} still a bit further:

\begin{lem}\label{reductevenmore}%
Let $C$ be an affine curve over $\R$, irreducible and non-singular,
and let $V\subset\R[C]$ be a linear subspace with $\dim(V)=n+1$.
Suppose that, for every $\xi\in C(\R)$ and every local orientation
of $C(\R)$ at~$\xi$, some closed interval $J\subset C(\R)$ on the
positive side of $\xi$ can be found such that
$\sxdeg(P_{V,J_1})\le1+\lfloor\frac n2\rfloor$ holds for every closed
interval $J_1=[\xi,\xi_1]$ contained in $J$. Then
$\sxdeg(P_{V,S})\le1+\lfloor\frac n2\rfloor$ holds for every \sa\
set $S\subset C(\R)$.
\end{lem}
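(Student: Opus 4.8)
The plan is to cover $S$ by finitely many ``small'' intervals, each controlled by the hypothesis, and then use the sub-additivity $\sxdeg\bigl(\bigcap_i P_i\bigr)\le\max_i\sxdeg(P_i)$ from \cite[Lemma 1.4(d)]{sch:sxdeg}, as in \ref{1streducts}. First I would reduce to a single arc. Since $P_{V,S}$ depends only on the closure of $S$ in $C(\R)$, we may assume $S$ is closed in $C(\R)$. As $C$ is non-singular, $C(\R)$ is a disjoint union of smooth one-manifolds, each homeomorphic to $\R$ or to $S^1$, so $S$ is a finite union of points and of closed connected subsets, the latter each homeomorphic to a compact interval, a closed half-line, or an entire $\R$- or $S^1$-component. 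An entire $S^1$-component is a union of two compact arcs and an entire $\R$-component is a union of two closed half-lines; hence, $\sxdeg$ of an intersection being at most the maximum, it suffices to bound $\sxdeg(P_{V,S})$ when $S$ is a point, a compact interval, or a closed half-line. A point $\xi$ gives the halfspace (or all of $V$) $P_{V,\{\xi\}}=\{f\in V\colon f(\xi)\ge0\}$, so there $\sxdeg\le1\le1+\lfloor\frac n2\rfloor$.

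For a compact interval $S$ with endpoints $\alpha,\beta$ I would produce a finite good cover by chaining along $S$. At an interior point $\xi$, applying the hypothesis to the two local orientations gives closed intervals $J^-_\xi,J^+_\xi\subset S$ on the two sides of $\xi$ (one may shrink them into $S$, which preserves their property) such that $\sxdeg(P_{V,[\xi,\xi_1]})\le1+\lfloor\frac n2\rfloor$ for every subinterval $[\xi,\xi_1]$ of $J^\pm_\xi$; at $\xi\in\{\alpha,\beta\}$ the hypothesis with the inward orientation gives one such $J_\xi$. Call $[\xi,\xi_1]\subset S$ \emph{admissible} if it arises in this way, so $\sxdeg(P_{V,I})\le1+\lfloor\frac n2\rfloor$ for every admissible $I$. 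Now let $q^*$ be the supremum, in the order running from $\alpha$ to $\beta$, of the $q\in S$ for which $[\alpha,q]$ is a finite union of admissible intervals. If $\alpha<q^*$, then choosing $q$ just below $q^*$ inside $J^-_{q^*}$ (where $[\alpha,q]$ is already such a union) and adjoining the admissible interval $[q^*,q]$ shows the supremum is attained; if moreover $q^*<\beta$, adjoining an admissible $[q^*,q_1]\subset J^+_{q^*}$ extends the cover past $q^*$, a contradiction. Hence $q^*=\beta$ and $S=S_1\cup\dots\cup S_r$ with all $S_i$ admissible, whence $\sxdeg(P_{V,S})\le\max_i\sxdeg(P_{V,S_i})\le1+\lfloor\frac n2\rfloor$.

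The case of a closed half-line $S$ is the one I expect to be the real obstacle, since such an $S$ is never a finite union of admissible intervals: a fixed neighbourhood of its end $u\in\ol C(\R)\setminus C(\R)$ is not a finite union of intervals with endpoints in $C(\R)$, so the chaining yields only covers of $[b_0,q]$ with $q<u$ and never exhausts $S$. Writing $S=[b_0,b_1]\cup[b_1,u)$ and disposing of the compact part as above, one must bound $\sxdeg(P_{V,[b_1,u)})$ for $b_1$ close to $u$. Near $u$ along $S$ the sign of $f\in V$ equals that of its leading coefficient in a local parameter $t$ at $u$ chosen with $t|_S>0$, so $P_{V,[b_1,u)}$ is a perturbation of the cone belonging to the leading-order subspace $\spn(t^{j_0},\dots,t^{j_n})$, where $j_0<\dots<j_n$ are the orders of vanishing occurring in $V$ at $u$; this model cone is of monomial-curve type and hence has $\sxdeg\le1+\lfloor\frac n2\rfloor$ by \cite{avsch}. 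Turning this comparison into an identity of $\sxdeg$ for $b_1$ small enough is exactly the ``localization'' step that also underlies the main proof, and is where I expect the genuine effort to lie; one might alternatively try to avoid half-lines from the outset by replacing $C$ with an affine open subset of $\ol C$ whose complement consists of non-real points, so that $C(\R)$ becomes compact, provided the hypothesis is inherited under this replacement.
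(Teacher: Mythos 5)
Your treatment of the compact case is correct and complete: the chaining/supremum argument is a valid (and considerably more detailed) version of what the paper dispatches in one line, namely that a set homeomorphic to $[0,1]$ is covered by finitely many intervals $J_i=[\xi_i,\xi_i']$ as in the statement, after which $\sxdeg(P_{V,S})\le\max_i\sxdeg(P_{V,J_i})$ by subadditivity under finite intersections. The preliminary reduction to points, compact arcs and closed half-lines is also fine. But the half-line case, which you correctly single out as the obstacle and then leave open, is a genuine gap: neither of your two sketches is carried out, and the first one (comparing with the monomial-curve model at the end $u$) is not a proof and would cost roughly as much effort as the main theorem itself.

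The paper closes exactly this gap by the second route you gesture at, in a slightly different form: it does not try to make the complement of $C$ in $\ol C$ non-real, but replaces $C$ by $C'=\ol C\setminus\{v\}$ for a \emph{real} point $v\in C(\R)\setminus S$ (so $C'$ is affine and the end $u\in\ol C(\R)\setminus C(\R)$ of the half-line becomes a point of $C'(\R)$), and replaces $V$ by $V'=p^{2k}V\subset\R[C']$ for suitable $p$ vanishing on $\ol C\setminus C$ and $k\gg0$; multiplication by $p^{2k}$ is a linear isomorphism identifying $P_{V,S}$ with $P_{V',S'}$, where $S'=S\cup\{u\}$ is compact. Your worry about whether ``the hypothesis is inherited under this replacement'' is precisely the right one, and it is resolved not by an inheritance argument but by the way the lemma is deployed: the hypothesis is verified later in the paper for \emph{every} irreducible non-singular affine curve and every $V$, in particular for $(C',V')$, which is why the paper's own proof can simply invoke the earlier reduction and assume $S\cong[0,1]$. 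Read literally for a single fixed pair $(C,V)$, the half-line case is not reached by finite covering --- as you observe, $P_{V,S}$ is then only an infinite intersection of the controlled cones --- so some such change of model is unavoidable, and without it your proposal establishes the lemma only for compact $S$.
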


\begin{proof}
By Lemma \ref{reductpart1} we may assume that $S$ is homeomorphic to
$[0,1]$. Then $S$ is covered by finitely many closed intervals
$J_i=[\xi_i,\xi'_i]$ as in the statement, and so
$$\sxdeg(P_{V,S})\>\le\>\max_i\,\sxdeg(P_{V,J_i})\>\le\>
1+\lfloor\frac n2\rfloor$$
by the remarks in \ref{1streducts}.
\end{proof}

\begin{lab}\label{addlasspts2}%
Throughout the paper we'll denote the affine coordinate ring of the
curve $C$ by $A=\R[C]$. In view of Lemma \ref{reductevenmore}, fix a
point $\xi\in C(\R)$ together with one of the two local orientations
at~$\xi$. To verify the condition in this lemma we may replace the
curve $C$ by an arbitrary Zariski-open neighborhood of $\xi$ in $C$.
In this way we may assume that there exists an element $t$ in $A$
such that the $A$-module $\Omega_{A/\R}$ of K\"ahler differentials is
freely generated by $dt$.
After replacing $t$ with $t-t(\xi)$, the element $t$ will be a local
uniformizer at $\xi$, and after multiplication with $-1$ if
necessary, $t$ will represent the given local orientation at~$\xi$.
Both $\xi$ and $t$ will be fixed for the rest of the paper. By
shrinking $C$ further around $\xi$ if necessary, we can in addition
assume that the maximal ideal $\m_\xi=\{f\in A\colon f(\xi)=0\}$ in
$A$ is principal, generated by~$t$.
\end{lab}

\begin{lab}\label{tenseval}%
The second reduction step is more abstract in nature. First we need
to recall some facts from \cite[Sect.~3]{sch:sxdeg} about tensor
evaluation. Let $X$ be an affine $\R$-variety and let $R\supset\R$ be
a real closed field. Write $R[X]=\R[X]\otimes R$ (tensor product
over~$\R$) and let $f\in R[X]$. If $\eta\in X(R)$ is an $R$-rational
point of $X$ (i.e., a homomorphism $\eta\colon\R[X]\to R$), let the
\emph{tensor evaluation} $f^\otimes(\eta)$ (of $f$ at~$\eta$) be the
image of $f$ under the ring homomorphism
$$R[X]\>=\>\R[X]\otimes R\xrightarrow{\eta\otimes\id}R\otimes R.$$
Note that $f(\eta)\in R$ (the usual evaluation of $f$ at $a$) is the
result of applying the product map $R\otimes R\to R$ to
$f^\otimes(a)$.

For $M$ a \sa\ subset of $X(\R)$, let $M_R$ denote the base field
extension of $M$ to $R$ (see \cite[Sect.~4.1]{Sch}). This is the
subset of $X(R)$ that is described by the same finite system of
polynomial inequalities as~$M$. Given $\theta\in R\otimes R$, let
$\rk(\theta)$ (the tensor rank of $\theta$) be the minimal number
$r\ge0$ such that $\theta=\sum_{i=1}^ra_i\otimes b_i$ for suitable
$a_i,\,b_i\in R$. If $\theta$ is a sum of squares in $R\otimes R$,
let $\sosx(\theta)$ be the smallest integer $d\ge0$ such that there
is an identity $\theta=\sum_i\theta_i^2$ with $\rk(\theta_i)\le d$
for all~$i$.
If $\theta$ is not a sum of squares we put $\sosx(\theta)=\infty$.
To prove our main theorem we are going to apply the following
criterion. It is a particular case of \cite[Thm.~3.10]{sch:sxdeg}:
\end{lab}

\begin{thm}\label{critenseval}%
Let $C,\,V,\,S$ and $P=P_{V,S}$ be as in Theorem \ref{mainthm2}, and
let $E$ denote the set of elements in $P$ that span an extreme ray
of $P$. Let $d\ge1$ be an integer. Then $\sxdeg(P)\le d$ holds if and
only if $\sosx f^\otimes(\eta)\le d$ for every real closed field
$R\supset\R$, every $f\in E_R$ and every $\eta\in S_R$.
\end{thm}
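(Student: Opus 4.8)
The plan is to obtain Theorem \ref{critenseval} as the special case of the general tensor-evaluation criterion \cite[Thm.~3.10]{sch:sxdeg} that applies to cones of the shape $P_{V,S}$; accordingly the work consists in matching our setup to the hypotheses of that theorem and in identifying the set of extreme-ray generators after base field extension. First I would fix a basis $e_0,\dots,e_n$ of $V\subset A=\R[C]$, which identifies $V$ with $\R^{n+1}$ and turns the evaluation pairing $(f,\xi)\mapsto f(\xi)$ into a semialgebraic family of linear forms on $V$ parametrized by the points of the affine variety $C$. In this guise $P=P_{V,S}$ is precisely the cone of those $f\in V$ whose associated linear form is non-negative on the \sa\ set $S\subset C(\R)$, which is exactly the shape of cone to which \cite[Thm.~3.10]{sch:sxdeg} applies. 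Moreover, for $f\in V\otimes R=:V_R$ and $\eta\in C(R)$, the tensor evaluation $f^\otimes(\eta)\in R\otimes R$ of \ref{tenseval}, computed through $A\otimes R\xrightarrow{\eta\otimes\id}R\otimes R$, agrees with the one used in \cite{sch:sxdeg}, since both are assembled coordinatewise from the elements $e_i(\eta)\in R$.

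Next I would check the semialgebraicity inputs. The set $S$ is \sa\ by assumption, so the base extension $S_R\subset C(R)$ is defined, and $P_{V,S}$ is a closed \sa\ convex cone in $V$; by quantifier elimination one has $(P_{V,S})_R=P_{V_R,S_R}$. The set $E$ of generators of extreme rays of $P=P_{V,S}$ is \sa\ as well, this being a standard consequence of the Tarski--Seidenberg principle applied to the first-order description ``$f\ne0$, and $g,\,h\in P$ with $g+h=f$ imply $g\in\{af\colon a\ge0\}$''. Consequently $E_R$ makes sense, and the same transfer principle shows that the elements of $E_R$ are exactly the generators of extreme rays of the cone $P_{V_R,S_R}$ over the real closed field $R$.

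With these identifications in hand, Theorem \ref{critenseval} becomes a verbatim instance of \cite[Thm.~3.10]{sch:sxdeg}: $\sxdeg(P)\le d$ holds if and only if $\sosx f^\otimes(\eta)\le d$ for every real closed $R\supset\R$, every generator $f$ of an extreme ray of $P_R=P_{V_R,S_R}$, and every $\eta\in S_R$, and by the previous paragraph these generators are precisely the points of $E_R$. I expect the only point needing genuine care to be this transfer statement for extreme rays, namely that ``spanning an extreme ray'' commutes with real closed base field extension for \sa\ convex cones, since the identification of $P$ with a cone of non-negative linear forms and the matching of the two tensor-evaluation conventions are essentially bookkeeping.
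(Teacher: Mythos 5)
Your proposal is correct and matches the paper, which gives no independent argument for Theorem \ref{critenseval} but simply records it as a particular case of \cite[Thm.~3.10]{sch:sxdeg}; your additional remarks on matching the tensor-evaluation conventions and on the transfer of extreme-ray generators under real closed base extension are exactly the routine verifications implicit in that citation.
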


\begin{lab}\label{reduct2sosx}%
Summarizing, we have reduced the task of proving Theorem
\ref{mainthm2} to the following. Let $C$ be an irreducible and
non-singular affine curve over $\R$, let $V\subset A=\R[C]$ be a
linear subspace of dimension $n+1$ and let a point $\xi\in C(\R)$ be
given, together with a local orientation of $C(\R)$ at $\xi$. We
need to find a closed interval $S\subset C(\R)$ on the positive side
of $\xi$ such that the following holds: Whenever $R\supset\R$ is real
closed, $\eta\in S_R$ is a point and $f\in V_R=V\otimes R$ is
non-negative on $S_R$ and generates an extreme ray in
$(P_{V,S})_R=\{g\in V_R\colon g|_{S_R}\ge0\}$, there is an identity
$f^\otimes(\eta)=\sum_i\theta_i^2$ in $R\otimes R$ where each
$\theta_i$ has the form $\theta_i=\sum_{j=1}^ka_{ij}\otimes b_{ij}$
with $k=1+\lfloor\frac n2\rfloor$ and suitable
$a_{ij},\,b_{ij}\in R$.
\end{lab}

%-------------------------------------------------------------------%

\section{Complements on Schur polynomials}\label{schurprep}%

\begin{lab}\label{schurpolnot}%
Let $n\ge0$, let $x=(x_0,\dots,x_n)$ be a tuple of $n+1$ variables,
and let $\ula=(a_0,\dots,a_n)$ be a sequence of non-negative integers
that is strictly decreasing, i.e.\ satisfies
$a_0>\cdots>a_n\ge0$. By $\sigma_\ula=\sigma_\ula(x)$ we denote
the polynomial in the variables $x$ that is defined by the identity
\begin{equation}\label{bialtfml}%
\sigma_\ula(x_0,\dots,x_n)\cdot\prod_{0\le i<j\le n}(x_i-x_j)\>=\>
\det\Bigl(\bigl(x_i^{a_j}\bigr)_{i,j=0,\dots,d}\Bigr).
\end{equation}
The polynomial $\sigma_\ula(x)$ is the Schur polynomial associated
with the partition $\lambda=\ula-\delta_n$ where
$\delta_n=(n,\dots,1,0)$, and identity \eqref{bialtfml} is known as
Jacobi's bialternant formula (see \cite[Sect.~7.15]{St2}). Standard
notation would be $s_\lambda(x)$, rather than $\sigma_\ula(x)$, and
would use variables $x_1,\dots,x_n$ instead of $x_0,\dots,x_n$. In
this paper, however, it is the sequence $\ula$ of length $n+1$,
rather than the partition $\lambda$, that plays the main role.
Therefore we prefer the alternative notation used here.
\end{lab}

\begin{lab}\label{combinatschur}%
The polynomial $\sigma_\ula(x)$ is symmetric, and its coefficients
are non-negative integers that allow a combinatorial
characterization. Let us briefly recall this. Let $Y_\ula$ denote the
Young diagram of shape $\lambda=\ula-\delta_n$. So $Y_\ula$ consists
of $n+1$ rows that contain $a_0-n,a_1-n+1,\dots,a_n$ many entries,
respectively, and that are aligned to the left. An admissible filling
$T$ of $Y_\ula$ assigns to each position in $Y_\ula$ an integer from
$\{0,\dots,n\}$, in such a way that the entries are weakly increasing
in each row (from left to right) and strictly increasing in each
column (from top to bottom). If the entry $i$ is assigned to exactly
$\alpha(i)$ positions by the filling $T$, write
$x^T=x_0^{\alpha(0)}\cdots x_n^{\alpha(n)}$. With this notation one
has
$$\sigma_\ula(x)\>=\>\sum_Tx^T,$$
sum over all admissible fillings $T$ of $Y_\ula$,
see \cite[Sect.~7.10]{St2}.
\end{lab}

\begin{lem}\label{schurlem1a}%
Let $\ula\ne\ulb$ be strictly decreasing sequences of non-negative
integers, both of length $n+1$, and assume that $b_i\ge a_i$ for
$i=0,\dots,n$. Then every monomial of $\sigma_\ulb(x)$ is properly
divisible by some monomial of $\sigma_\ula(x)$.
\end{lem}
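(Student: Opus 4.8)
The plan is to argue combinatorially, using the tableau description of $\sigma_\ula$ recalled in \ref{combinatschur}. Write $\lambda=\ula-\delta_n$ and $\mu=\ulb-\delta_n$. Since $\ula$ and $\ulb$ are strictly decreasing sequences of integers, $\lambda$ and $\mu$ are genuine partitions: indeed $\lambda_i-\lambda_{i+1}=a_i-a_{i+1}-1\ge0$, and likewise for $\mu$. The hypothesis $b_i\ge a_i$ for all $i$ then reads $\mu_i\ge\lambda_i$ for all $i$, so the Young diagram $Y_\ula$, viewed as a set of cells, is contained in $Y_\ulb$; since both diagrams are left-justified, $Y_\ula$ occupies precisely the first $\lambda_i$ cells of row $i$ of $Y_\ulb$, for each $i$.

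Now let $m$ be any monomial of $\sigma_\ulb$. By \ref{combinatschur} we may write $m=x^T$ for some admissible filling $T$ of $Y_\ulb$ with entries in $\{0,\dots,n\}$. Let $T'$ be the restriction of $T$ to the subdiagram $Y_\ula\subset Y_\ulb$. Each row of $T'$ is an initial segment of the corresponding row of $T$, hence is weakly increasing; and since $\lambda$ is a partition, each column of $T'$ is an initial (top) segment of the corresponding column of $T$, hence is strictly increasing. Thus $T'$ is an admissible filling of $Y_\ula$, and therefore $x^{T'}$ is a monomial of $\sigma_\ula$.

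It remains to check that $x^{T'}$ divides $m$ properly. The multiset of entries of $T'$ is a sub-multiset of that of $T$, so for every $i$ the exponent of $x_i$ in $x^{T'}$ is at most its exponent in $x^T=m$; hence $x^{T'}\mid m$. Moreover $\deg x^{T'}=|\lambda|$ while $\deg m=|\mu|$, and $|\mu|-|\lambda|=\sum_i(b_i-a_i)>0$ because $b_i\ge a_i$ for all $i$ while $\ula\ne\ulb$; so the division is proper, and $x^{T'}$ is the required monomial of $\sigma_\ula$. I do not expect a genuine obstacle here: the only points needing a little care are the verification that $Y_\ula\subset Y_\ulb$ and that passing to a left-justified subdiagram preserves admissibility of a filling, and both are immediate once the partition inequalities $\lambda_i\le\mu_i$ are in place.
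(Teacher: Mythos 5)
Your proof is correct and follows essentially the same idea as the paper's: restrict an admissible filling of $Y_\ulb$ to the subdiagram $Y_\ula$ and observe that admissibility is preserved. The only difference is cosmetic — the paper first reduces to the case where $\ulb$ exceeds $\ula$ in a single entry by $1$ and deletes one box at a time, whereas you perform the restriction in one step; both verifications (rows are initial segments of rows, columns are top segments of columns) are sound.
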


\begin{proof}
It suffices to check this in the case when there is an index $k$
with $b_k=a_k+1$ and $b_i=a_i$ for all $i\ne k$.
Given an admissible
filling $T$ of $Y_\ulb$, corresponding to a monomial $x^\beta$
of $\sigma_\ulb(x)$, we may simply delete the last entry of $T$ in
row~$k$. This gives an admissible filling of $Y_\ula$ whose
corresponding monomial divides $x^\beta$.
\end{proof}

\begin{lem}\label{schurlem1}%
Let $\ula=(a_0,\dots,a_n)$ be a strictly decreasing sequence of
non-negative integers,
and let $\ulb=(b_0,\dots,b_r)$ be a subsequence of $\ula$ (i.e.,
there are $0\le i(0)<\cdots<i(r)\le n$ with $b_\nu=a_{i(\nu)}$ for
$\nu=0,\dots,r$). Then every monomial of
$\sigma_\ulb(x_0,\dots,x_r)$ is divisible by a monomial of
$\sigma_\ula(x_0,\dots,x_r,1,\dots,1)$.
\end{lem}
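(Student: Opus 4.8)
The plan is to expand $\sigma_\ula(x_0,\dots,x_r,1,\dots,1)$ into Schur polynomials in the variables $x_0,\dots,x_r$ using the combinatorial description of \ref{combinatschur}, and then to compare one carefully chosen summand with $\sigma_\ulb(x_0,\dots,x_r)$ by invoking Lemma \ref{schurlem1a}. Throughout, write $\lambda=\ula-\delta_n$, and let $i(0)<\cdots<i(r)$ be the indices with $b_\nu=a_{i(\nu)}$.

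First I would decompose each admissible filling $T$ of $Y_\ula$ (entries in $\{0,\dots,n\}$) into the sub-filling carrying entries $\le r$ and the sub-filling carrying entries $>r$. Since entries weakly increase along rows and strictly increase down columns, the boxes with entries $\le r$ occupy, in every row, an initial segment of columns whose length does not grow as we descend; hence they form a Young subdiagram $Y_\mu$ for a partition $\mu\subseteq\lambda$, carrying an admissible filling with entries in $\{0,\dots,r\}$, while the remaining boxes (the skew shape $\lambda/\mu$, filled weakly increasing in rows and strictly increasing in columns) carry an admissible filling with entries in $\{r+1,\dots,n\}$, and conversely any such pair glues back to an admissible filling of $Y_\ula$. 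Setting $x_{r+1}=\cdots=x_n=1$ and summing over $T$, this yields
$$\sigma_\ula(x_0,\dots,x_r,1,\dots,1)=\sum_{\mu\subseteq\lambda}N_{\lambda\mu}\,\sigma_{\mu+\delta_r}(x_0,\dots,x_r),$$
where $N_{\lambda\mu}$ is the number of admissible fillings of $\lambda/\mu$ with entries in $\{r+1,\dots,n\}$, so $N_{\lambda\mu}\ge1$ exactly when every column of $\lambda/\mu$ contains at most $n-r$ boxes. Because every $\sigma_{\mu+\delta_r}$ has non-negative coefficients, no cancellation occurs, so a monomial appears in $\sigma_\ula(x_0,\dots,x_r,1,\dots,1)$ as soon as it appears in $\sigma_{\mu+\delta_r}(x_0,\dots,x_r)$ for a single $\mu$ with $N_{\lambda\mu}\ge1$.

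Next I would take $\mu:=(\lambda_{i(0)},\dots,\lambda_{i(r)})$, the subsequence of the rows of $\lambda$ at the positions $i(0),\dots,i(r)$. This is a partition (as $\lambda$ is weakly decreasing); it satisfies $\mu\subseteq\lambda$ since $i(\nu)\ge\nu$; and for every $c$ the height of column $c$ of $\lambda/\mu$ equals the number of indices $j\notin\{i(0),\dots,i(r)\}$ with $\lambda_j\ge c$, hence is at most $n-r$, so $N_{\lambda\mu}\ge1$. Putting $\underline c:=\mu+\delta_r=(c_0,\dots,c_r)$, this $\underline c$ is a strictly decreasing sequence of non-negative integers with
$$c_\nu=\lambda_{i(\nu)}+(r-\nu)=a_{i(\nu)}-(n-i(\nu))+(r-\nu)\le a_{i(\nu)}=b_\nu,$$
the inequality because $n-i(\nu)\ge r-\nu$ (the integers $i(\nu+1)<\cdots<i(r)$ give $r-\nu$ distinct values strictly between $i(\nu)$ and $n+1$).

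Finally I would apply Lemma \ref{schurlem1a} to the pair $\underline c,\ulb$: when $\underline c\ne\ulb$ it gives that every monomial of $\sigma_\ulb(x_0,\dots,x_r)$ is properly divisible by a monomial of $\sigma_{\underline c}(x_0,\dots,x_r)$, and when $\underline c=\ulb$ this is trivial. Since $\sigma_{\underline c}=\sigma_{\mu+\delta_r}$ and the latter's monomials all occur in $\sigma_\ula(x_0,\dots,x_r,1,\dots,1)$ by the first step, the lemma follows. I expect the only delicate point to be the bookkeeping in the middle step: checking that the chosen $\mu$ lies in the support (no column of $\lambda/\mu$ is too tall) and correctly tracking the two different shifts $\delta_n$ and $\delta_r$ when passing between sequences of length $n+1$ and of length $r+1$.
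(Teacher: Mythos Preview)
Your proof is correct but takes a genuinely different route from the paper's. The paper reduces to the case $r=n-1$ (removing a single term $a_k$ from $\ula$) and handles that case by direct tableau surgery: given an admissible filling $T$ of $Y_\ulb$, it deletes the rightmost entry in each of the top $k$ rows, copies what remains row by row into $Y_\ula$, and fills all leftover boxes with the new largest entry~$n$; substituting $x_n=1$ in the resulting monomial then yields a divisor of the original one. No appeal to Lemma~\ref{schurlem1a} is made.

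Your argument instead expands $\sigma_\ula(x_0,\dots,x_r,1,\dots,1)$ via the branching decomposition into Schur polynomials in $x_0,\dots,x_r$, singles out the summand $\sigma_{\underline c}$ with $c_\nu=\lambda_{i(\nu)}+(r-\nu)$, checks $c_\nu\le b_\nu$, and finishes with Lemma~\ref{schurlem1a}. This is more structural and handles all $r$ at once; it also makes the relationship with Lemma~\ref{schurlem1a} explicit. The price is that you rely on the (standard but not proved here) fact that a skew shape admits a semistandard filling with $n-r$ letters as soon as every column has at most $n-r$ boxes. The paper's argument, by contrast, is entirely self-contained at the level of explicit fillings.
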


\begin{proof}
It suffices to prove this in the case $r=n-1$. Write
$x=(x_0,\dots,x_n)$ and $x'=(x_0,\dots,x_{n-1})$.
Let $Y_\ula$ and $Y_\ulb$ be the Young diagrams of $\ula$ and $\ulb$,
respectively, and let $k\in\{0,\dots,n\}$ be the index for which
$a_k$ is missing in $\ulb$. The bottom $n-k$ rows of $Y_\ula$ and of
$Y_\ulb$ have the same lengths, while the top $k$ rows of $Y_\ula$
are each shorter by one than the corrsponding rows of $Y_\ulb$.
Given an admissible filling $T$ of $Y_\ulb$, corresponding to a
monomial $x'^\beta$ of $\sigma_\ulb(x')$, construct an admissible
filling of $Y_\ula$ in the following way. First delete the last entry
in each of the top $k$ rows of $T$. Then, for $i$ from $0$ to
$n-1$, put the remaining entries of row~$i$ of $T$ into row~$i$ of
$Y_\ula$, starting from the left in each row.
Finally, put the entry~$n$ into those boxes of $Y_\ula$ that are
still unfilled.
The filling of $Y_\ula$ constructed in this way is admissible and
corresponds to a monomial $x^\alpha$ of $\sigma_\ula(x)$ which, after
substitution $x_n=1$, divides $x'^\beta$.
\end{proof}

The following lemma will be needed in Section~\ref{repdet1}:

\begin{lem}\label{detlem2}%
Let $B$ be a commutative ring, let $p_0,\dots,p_n\in B[[t]]$ be
formal power series and let $0\le r\le n$. Let $M$ be a matrix of
size $(n+1)\times(n+1)$ with coefficients in $B[[x_0,\dots,x_r]]$
whose $i$-th row is
$$\bigl(p_0(x_i),\,\dots,\,p_n(x_i)\bigr)$$
for $i=0,\dots,r$, and whose lower $n-r$ rows have coefficients
in~$B$.
\begin{itemize}
\item[(a)]
There is a (unique) power series $g\in B[[x_0,\dots,x_r]]$ such that
$$\det(M)\>=\>g\cdot\prod_{0\le i<j\le r}(x_i-x_j).$$
\item[(b)]
If the vanishing orders $m_i=\ord_t(p_i)$ ($i=0,\dots,n$) satisfy
$m_0>\cdots>m_n$, the series $g$ lies in the ideal of
$B[[x_0,\dots,x_r]]$ that is generated by the monomials of
$\sigma_\ulm(x_0,\dots,x_r,1,\dots,1)$, where $\ulm=(m_0,\dots,m_n)$.
\end{itemize}
\end{lem}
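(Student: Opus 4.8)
The plan is to expand $\det(M)$ explicitly and then feed the output into Lemmas \ref{schurlem1a} and \ref{schurlem1}.

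For (a): regard the top $r+1$ rows of $M$ as power series in the respective variables, writing the $i$-th row ($0\le i\le r$) as $\sum_{k\ge0}x_i^k\mathbf v_k$, where $\mathbf v_k\in B^{n+1}$ collects the $t^k$-coefficients of $p_0,\dots,p_n$. Expanding the determinant by multilinearity in these rows gives
\[\det(M)\>=\>\sum_{k_0,\dots,k_r\ge0}x_0^{k_0}\cdots x_r^{k_r}\,D(k_0,\dots,k_r),\]
where $D(k_0,\dots,k_r)\in B$ is the determinant of the matrix whose top $r+1$ rows are $\mathbf v_{k_0},\dots,\mathbf v_{k_r}$ and whose bottom $n-r$ rows are those of $M$. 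The coefficient $D$ is alternating in its arguments and vanishes whenever two of them coincide, so collecting monomials according to the underlying strictly decreasing rearrangement $\ulb=(b_0>\cdots>b_r)$ and antisymmetrizing turns the inner sum into $\det\bigl((x_i^{b_j})_{i,j=0}^r\bigr)$, which by Jacobi's bialternant formula \eqref{bialtfml} (with $r$ in place of $n$) equals $\sigma_\ulb(x_0,\dots,x_r)\cdot\prod_{0\le i<j\le r}(x_i-x_j)$. Hence $\det(M)=g\cdot\prod_{i<j}(x_i-x_j)$ with $g=\sum_\ulb D(\ulb)\,\sigma_\ulb(x_0,\dots,x_r)$; this is a genuine power series because $\sigma_\ulb$ is homogeneous of degree $\sum_ib_i-\binom{r+1}{2}$, so only finitely many $\ulb$ contribute in each degree. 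Uniqueness of $g$ holds because $\prod_{i<j}(x_i-x_j)$ is a non-zero-divisor in $B[[x_0,\dots,x_r]]$: each factor $x_i-x_j$ is a monic polynomial of positive degree in $x_i$ over $B[[(x_l)_{l\ne i}]]$, hence a non-zero-divisor, and a product of non-zero-divisors is one.

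For (b), by the formula for $g$ — and since the monomials generating the target ideal $I$ are finite in number while distinct $\ulb$ occur in distinct degrees — it suffices to show $\sigma_\ulb(x_0,\dots,x_r)\in I$ for every $\ulb$ with $D(\ulb)\ne0$. Fix such a $\ulb$. A Laplace expansion of $D(\ulb)$ along its top $r+1$ rows produces a column set $J=\{j_0<\cdots<j_r\}\subset\{0,\dots,n\}$ for which the $(r+1)\times(r+1)$ minor on rows $0,\dots,r$ and columns $J$ is nonzero; the $(\nu,\mu)$-entry of that minor is the $t^{b_\nu}$-coefficient of $p_{j_\mu}$, which vanishes once $b_\nu<\ord_t(p_{j_\mu})=m_{j_\mu}$. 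A nonzero determinant over a commutative ring has a nonzero term in its permutation expansion, and in such a term every factor is nonzero, so there is a permutation $\pi$ with $b_\nu\ge m_{j_{\pi(\nu)}}$ for all $\nu$. Since $(b_\nu)_\nu$ and $(m_{j_\mu})_\mu$ are both strictly decreasing, this upgrades to the term-by-term inequality $b_\nu\ge m_{j_\nu}$ for all $\nu$: for each fixed $\nu$, among the $r-\nu+1$ indices $\{\nu,\dots,r\}$ at least one, say $\nu''$, satisfies $\pi(\nu'')\le\nu$ (they cannot all land in the $(r-\nu)$-element set $\{\nu+1,\dots,r\}$), whence $b_\nu\ge b_{\nu''}\ge m_{j_{\pi(\nu'')}}\ge m_{j_\nu}$.

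It remains to feed $b_\nu\ge m_{j_\nu}$ into the Schur lemmas. The sequences $(b_0,\dots,b_r)$ and $(m_{j_0},\dots,m_{j_r})$ are strictly decreasing of equal length with $b_\nu\ge m_{j_\nu}$, so Lemma \ref{schurlem1a} (with $r$ in place of $n$; the case of coinciding sequences being trivial) shows every monomial of $\sigma_\ulb(x_0,\dots,x_r)$ is divisible by a monomial of $\sigma_{(m_{j_0},\dots,m_{j_r})}(x_0,\dots,x_r)$. Since $(m_{j_0},\dots,m_{j_r})$ is a subsequence of $\ulm$, Lemma \ref{schurlem1} shows every monomial of $\sigma_{(m_{j_0},\dots,m_{j_r})}(x_0,\dots,x_r)$ is divisible by a monomial of $\sigma_\ulm(x_0,\dots,x_r,1,\dots,1)$. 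Composing these divisibilities gives $\sigma_\ulb(x_0,\dots,x_r)\in I$, and summing over $\ulb$ degree by degree yields $g\in I$. I expect the genuine difficulty to be the middle step — extracting the term-by-term domination $b_\nu\ge m_{j_\nu}$ from the bare non-vanishing of a subdeterminant; after that, Lemmas \ref{schurlem1a} and \ref{schurlem1} carry the combinatorial load, and the convergence of the series defining $g$ is routine once one invokes the degree grading.
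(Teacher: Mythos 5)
Your proof is correct and follows essentially the same route as the paper's: expand the determinant into generalized Vandermonde alternants, apply Jacobi's bialternant formula, and feed the resulting strictly decreasing exponent sequences (which dominate a subsequence of $\ulm$ termwise) into Lemmas \ref{schurlem1a} and \ref{schurlem1}. The only differences are organizational — you expand by multilinearity in the top rows and extract the domination $b_\nu\ge m_{j_\nu}$ from a nonvanishing permutation term, while the paper first reduces to monomial $p_i$ and Laplace-expands along the bottom rows — and both arguments rely in the same way on the target ideal being a finitely generated monomial ideal to handle the infinite sum.
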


\begin{proof}
The factor $g$ is unique since $x_i-x_j$ is not a zero divisor in the
power series ring, for $i<j$. Assertion (a) is clear since the
determinant vanishes after substitution $x_j:=x_i$.
Using the Leibniz formula, it suffices for (b) to consider the case
where each $p_i$ is a monomial $p_i=t^{a_i}$ with $a_i\ge m_i$.
So assume $p_i=t^{a_i}$ with $a_i\ge m_i$ for all~$i$. Expanding the
determinant by its lower $n-r$ rows, we see that $\det(M)$ is a
$B$-linear combination (usually infinite) of
$(r+1)\times(r+1)$-determinants
\begin{equation}\label{subdetindetlem}%
\det\begin{pmatrix}x_0^{b_0}&\cdots&x_0^{b_r}\\\vdots&&\vdots\\
x_r^{b_0}&\cdots&x_r^{b_r}\end{pmatrix}
\end{equation}
where $\ulb=(b_0,\dots,b_r)$ is a subsequence of
$\ula=(a_0,\dots,a_n)$. Say $b_i=a_{\nu(i)}$ with
$0\le\nu(0)<\cdots<\nu(r)\le n$,
let $\ulm'=(m_{\nu(0)},\dots,m_{\nu(r)})$ and let
$\ulb'=(b'_0,\dots,b'_r)$ be the descending permutation of $\ulb$.
We may assume that $b_i\ne b_j$ for $i\ne j$,
then \eqref{subdetindetlem} is equal to
$\pm\sigma_{\ulb'}(x_0,\dots,x_r)\cdot
\prod_{0\le i<j\le r}(x_i-x_j)$. It is easily checked that
$b'_i\ge m'_i=m_{\nu(i)}$ holds for $i=0,\dots,r$.
Therefore every monomial in $\sigma_{\ulb'}(x_0,\dots,x_r)$ is
divisible by some monomial in $\sigma_{\ulm'}(x_0,\dots,x_r)$, by
Lemma \ref{schurlem1a}, and hence by a monomial in
$\sigma_\ulm(x_0,\dots,x_r,1,\dots,1)$ (Lemma \ref{schurlem1}). So
the proof is complete.
\end{proof}

%-------------------------------------------------------------------%

\section{Extreme rays}\label{extprep}%

\begin{lab}\label{asspts}%
Let $C$ be an affine algebraic curve over $\R$ that is irreducible
and non-singular. As before we write $A=\R[C]$ for the affine
coordinate ring of $C$. We assume that $C(\R)\ne\emptyset$, so
$C(\R)$ is topologically a union of finitely many loops ($1$-spheres)
and copies of the affine line~$\R$. Let $V$ be a linear subspace of
$A$ of dimension $n+1<\infty$, with basis $p_0,\dots,p_n$. For the
following fix a compact \sa\ subset $S\ne\emptyset$ of $C(\R)$
without isolated points.
We are going to study the convex cone $P=P_{V,S}$ in $V$ consisting
of all $f\in V$ that are non-negative on~$S$. Clearly, the cone $P$
is closed and pointed.
\end{lab}

\begin{dfn}\label{dfnvf}%
Given
an element $f\in V$, put
$$V_f\>=\>\{g\in V\colon\all\xi\in S\ \ord_\xi(g)\ge\ord_\xi(f)\}.$$
So $V_f$ is the linear space consisting of all $g\in V$ with at least
the same zeros in $S$ as $f$, taking multiplicities into account.
\end{dfn}

\begin{prop}\label{extstrahl}%
Let $S\ne\emptyset$ be a compact \sa\ set in $C(\R)$ without isolated
points, and let $0\ne f\in P=P_{V,S}$.
\begin{itemize}
\item[(a)]
$V_f=F_f-F_f$, the linear span of the supporting face $F_f$ of $f$
in $P$.
\item[(b)]
In particular, if $f$ spans an extreme ray of $P$ then $V_f=\R f$.
\end{itemize}
\end{prop}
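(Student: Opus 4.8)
The plan is to prove (a) first and then deduce (b) as an immediate consequence, since if $f$ spans an extreme ray then $F_f = \R_{\ge0}f$ and hence $F_f - F_f = \R f$. So the real content is the identity $V_f = F_f - F_f$.

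For the inclusion $F_f - F_f \subseteq V_f$: if $g \in F_f$, then since $F_f$ is the smallest face containing $f$, the point $f$ lies in the relative interior of $F_f$ (a standard fact about supporting faces: $F_x$ is characterized by $x \in \mathrm{relint}(F_x)$). Hence there is $\varepsilon > 0$ with $f - \varepsilon g \in F_f \subseteq P$, so $f - \varepsilon g \ge 0$ on $S$. Combined with $g \ge 0$ on $S$, at any $\xi \in S$ where $f$ vanishes to order $m = \ord_\xi(f)$, the inequality $0 \le g \le \varepsilon^{-1} f$ near $\xi$ forces $\ord_\xi(g) \ge m$ (a non-negative function bounded above by one vanishing to order $m$ must itself vanish to order $\ge m$ — here one uses that near a real point of a non-singular curve the local ring embeds in a formal/convergent power series ring in one variable, so "$0 \le g \le c\,f$ near $\xi$" is exactly the statement about the leading terms of the one-variable series). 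Taking $\R$-linear combinations of elements of $F_f$ gives $F_f - F_f \subseteq V_f$.

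For the reverse inclusion $V_f \subseteq F_f - F_f$: the key point is to show that for $g \in V_f$ one has $f + tg \in P$ for all sufficiently small $|t|$, i.e.\ $f \pm tg \ge 0$ on $S$ for small $t > 0$; then $f \pm tg \in P$ together with $(f+tg)+(f-tg) = 2f$ and the face property of $F_f$ gives $f \pm tg \in F_f$, whence $g = \frac{1}{2t}\big((f+tg)-(f-tg)\big) \in F_f - F_f$. To get $f \pm tg \ge 0$ on $S$ for small $t$, cover $S$ by finitely many pieces: away from the zero set $Z = \{\xi \in S : f(\xi) = 0\}$ of $f$ in $S$, the function $f$ is bounded below by a positive constant on the (compact) complement of small neighborhoods of $Z$, so a small perturbation stays positive there; and near each $\xi \in Z$, using $\ord_\xi(g) \ge \ord_\xi(f)$ and working in the local one-variable parametrization, $g$ is a bounded multiple of $f$ in a neighborhood, say $|g| \le c_\xi\, f$, so $f \pm tg \ge (1 - t c_\xi) f \ge 0$ once $t \le c_\xi^{-1}$. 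Here one crucially uses that $S$ is \sa{} (hence a finite union of points and arcs, so $Z$ is finite) and compact without isolated points, and that $C$ is non-singular so these local estimates make sense; the \L ojasiewicz inequality, or simply the explicit local power-series description, provides the bound $|g| \le c_\xi f$ locally.

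The main obstacle is the local analysis near a zero $\xi$ of $f$: one must pass from the order-of-vanishing hypothesis $\ord_\xi(g) \ge \ord_\xi(f)$ to a genuine pointwise inequality $|g| \le c_\xi\, f$ on a one-sided (or two-sided) neighborhood of $\xi$ in $S$. Since $S$ need not be a full neighborhood of $\xi$ in $C(\R)$ and $f$ need not change sign, one has to be careful that $f \ge 0$ on $S$ near $\xi$ (which holds since $f \in P$) and that the quotient $g/f$, a rational function with a pole of order $0$ at $\xi$ after the order comparison, is actually bounded on $S$ near $\xi$ — this follows because $g/f$ extends to a regular function at $\xi$ (both $f$ and $g$ being elements of the coordinate ring of the non-singular curve, with $\ord_\xi(g/f) \ge 0$), hence is continuous there and therefore bounded on a compact neighborhood. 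A mild subtlety is that $S$ may also be compact with finitely many "endpoints" where the one-sided version of these estimates is needed, but \sa{}ness reduces everything to finitely many such local checks.
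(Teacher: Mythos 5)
Your proof is correct and follows essentially the same route as the paper: both inclusions rest on the observation that $\ord_\xi(g)\ge\ord_\xi(f)$ for all $\xi\in S$ makes $g/f$ regular, hence bounded, on the compact set $S$, so that $f\pm tg\ge0$ on $S$ for small $t>0$. The paper merely packages your local patching argument globally, by working in the ring $B$ of rational functions without poles on $S$ and using the supremum norm $\|\cdot\|_S$ on $W=\{g\in B\colon fg\in V_f\}$.
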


\begin{proof}%[Proof of Proposition \ref{extstrahl}]
It suffices to prove (a). For this let $B$ be the ring of all
rational functions on $C$ that have no pole in any point of~$S$.
Considering $A$ as a subring of $B$ we have $V_f=V\cap fB$.
The supporting face of $f$ in $P$ is $F_f=\{p\in P\colon f-p\in P\}$.
If $p,\,q\in P$ satisfy $f=p+q$, then clearly $p,\,q\in V_f$, showing
that $F_f\subset V_f$.
To prove that $V_f$ is spanned by $F_f$, we show that there exists an
open neighborhood $\itOmega$ of the origin in $V_f$ such that
$f+\itOmega\subset F_f$.
Let $W=\{g\in B\colon fg\in V_f\}$, then
the linear map $W\to V_f$, $g\mapsto fg$ is an isomomorphism.
Let $||\cdot||_S$ denote the supremum norm on $S$.
Then $\itOmega'=\{g\in W\colon||g||_S<1\}$ is an open neighborhood of
the origin in $W$, and so $f\itOmega'$ is a neighborhood of $0$ in
$V_f$. Since for every $g\in\itOmega'$ we have $f(1\pm g)\in P$, it
follows that $f(1+\itOmega')\subset F_f$, completing the proof.
\end{proof}

As a consequence, note that every extreme ray of $P$ is determined
(inside $V$) by its zeros in $S$, listed with multiplicities. Since
prescribing one zero means one linear condition on elements of $V$,
we also conclude:

\begin{cor}\label{kor2extstrahl}%
If $f\in P$ spans an extreme ray of $P$, then $f$ has at least $n$
zeros in $S$, counted with multiplicities.
\qed
\end{cor}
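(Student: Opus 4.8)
The plan is to derive this directly from Proposition~\ref{extstrahl}(b) by a dimension count on $V_f$. First I would note that, $f$ being a nonzero regular function on the irreducible curve $C$, it has only finitely many zeros on $C(\R)$; let $\xi_1,\dots,\xi_s$ be those lying in $S$, with multiplicities $m_i=\ord_{\xi_i}(f)\ge1$. For $g\in V$ the condition $\ord_\xi(g)\ge\ord_\xi(f)$ is vacuous at every $\xi\in S$ that is not a zero of $f$, so Definition~\ref{dfnvf} gives
$$V_f\>=\>\{g\in V\colon g\in\m_{\xi_i}^{m_i}\text{ for }i=1,\dots,s\},$$
where $\m_{\xi_i}\subset A$ is the maximal ideal of $\xi_i$.

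Next I would use non-singularity of $C$: each local ring $A_{\m_{\xi_i}}$ is a discrete valuation ring, hence $\dim_\R A/\m_{\xi_i}^{m_i}=m_i$. Therefore the natural $\R$-linear map $V\to\bigoplus_{i=1}^sA/\m_{\xi_i}^{m_i}$ has kernel exactly $V_f$ and target of dimension $\sum_{i=1}^sm_i$, so that
$$\dim_\R V_f\>\ge\>\dim_\R V-\sum_{i=1}^sm_i\>=\>n+1-\sum_{i=1}^sm_i.$$
Finally, since $f$ spans an extreme ray of $P$, Proposition~\ref{extstrahl}(b) gives $V_f=\R f$, i.e.\ $\dim_\R V_f=1$; combined with the inequality above this yields $\sum_{i=1}^sm_i\ge n$, which says exactly that $f$ has at least $n$ zeros in $S$ counted with multiplicity.

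There is no real obstacle here: the argument is the elementary observation, flagged just before the statement, that prescribing a zero of multiplicity $m$ at a point of $S$ cuts $V$ down by at most $m$ linear conditions. The only points deserving a word of care are that $f\ne0$ has finitely many zeros on $C(\R)$, so that only finitely many linear conditions enter, and that vanishing to order $\ge m$ at a smooth point of $C$ is indeed exactly $m$ independent linear conditions on $A$ — both immediate from the non-singularity of $C$ assumed in \ref{asspts}.
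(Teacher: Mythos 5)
Your argument is correct and is exactly the paper's (the corollary is stated there as an immediate consequence of Proposition \ref{extstrahl}(b) plus the remark that each prescribed zero, with multiplicity, imposes one linear condition on $V$; you have merely written out that dimension count via the map $V\to\bigoplus_i A/\m_{\xi_i}^{m_i}$). No issues.
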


We remark that the results of this section remain true
\emph{verbatim} if the ground field is an arbitrary real closed field
$R$, instead of the field of real numbers. Of course one has to
replace ``convex'' by ``$R$-convex'', the corresponding notion
over~$R$ (cf.\ \cite[Cor.~1.6.18]{Sch}).

If $f$ spans an extreme ray of $P$, we'll later use Proposition
\ref{extstrahl} to obtain a determinantal identity for $f$, at least
in the case where $f$ has exactly $n$ zeros in~$S$. See Proposition
\ref{simplema} and Section~\ref{sectpfmainthm}.

%-------------------------------------------------------------------%

\section{Outline of the proof}\label{outline}%

We now give a very coarse outline for the proof of Theorem
\ref{mainthm2} (following the reductions made in \ref{reduct2sosx})
and for the rest of this paper.
Given the point $\xi\in C(\R)$, the linear system $V$ in $A=\R[C]$
has a basis $p_0,\dots,p_n$ such that the sequence
$\ulm=(m_0,\dots,m_n)$ of vanishing orders $m_i=\ord_\xi(p_i)$ is
strictly decreasing. If $S$ is a closed interval on the positive
side of $\xi$, and if $S$ is sufficiently small, a general extremal
member $f$ of $(P_{V,S})_R$ (for $R\supset\R$ any real closed field)
will have precisely $n$ zeros in $S_R$, and these zeros determine
$f$ up to scaling. If we fix the number $r$ of different zeros
$\xi_1,\dots,\xi_r$ and the tuple $\ulb=(b_1,\dots,b_r)$ of their
(even) multiplicities, there exists a Nash function (analytic and
algebraic) $G(x_0,x_1,\dots,x_r)$, defined on a neighborhood of
$(\xi,\dots,\xi)$ in $C(\R)^{r+1}$, such that $f$ is given as
\begin{equation}\label{prodecomp}%
f(x)\>=\>G(x,\xi_1,\dots,\xi_r)\cdot\prod_{j=1}^r(t(x)-t(\xi_j))
^{b_j}
\end{equation}
in a neighborhood of $x=\xi$.
Writing $G$ as a (convergent) power series in $(t_0,\dots,t_r)$, the
lowest degree terms of $G$ are given by a Schur polynomial
\begin{equation}\label{cofactorschur}%
\sigma_\ulm(t_0,t_1,\dots,t_1,\dots,t_r,\dots,t_r),
\end{equation}
up to a scalar factor. ``Morally'', the tensor evaluation
$f^\otimes(\eta)$ should be the image of
\begin{equation}\label{morally}%
G(\eta,\xi_1,\dots,\xi_r)\cdot\prod_{j=1}^r(t(\eta)-t(\xi_j))^{b_j}
\end{equation}
under a ring homomorphism $\R\langle x_0,\dots,x_r\rangle\to
R\otimes R$ that maps $x_0$ to $t(\eta)\otimes1$ and $x_j$ to
$1\otimes t(\xi_j)$ for $j=1,\dots,r$. (Here $\R\langle\cdots\rangle$
denotes the ring of algebraic (Nash) power series.) The desired
$\sosx$-invariant $k=1+\lfloor\frac n2\rfloor$ should then come out
from \eqref{morally} correctly, after some further discussion.

When $C$ is a monomial curve, parametrized by a tuple
$(x^{m_0},\dots,x^{m_n})$ of monomials, this approach works and was
essentially carried out in \cite{avsch}. In fact, the cofactor $G$
agrees with the Schur polynomial \eqref{cofactorschur} in this case.
So for monomial curves, there is no need to use power series.

In the general case, however, there is one major problem, apart from
a number of detail questions that have been suppressed: A ring
homomorphism $\R\langle x_0,\dots,x_r\rangle\to R\otimes R$ as above
\emph{simply does not exist}. The approach via (Nash) power series is
too coarse. Instead it is necessary to work in an $(n+1)$-fold tensor
product $A^{\otimes(n+1)}$ over $\R$, to have the algebraic
dependence between the arguments built into the setup.
The backdrop is that there won't be a product decomposition
\eqref{prodecomp} any more. The vanishing ideal in $A^{\otimes(n+1)}$
of the generalized diagonal is not principal unless the curve is
rational, and so one has to work with suitable ideal generators and
with a corresponding decomposition. Arriving at a proper substitute
for a product decomposition \eqref{morally} is a major technical
step that will be carried out in the next two sections. The actual
proof of the main theorem will then be given in
Section~\ref{sectpfmainthm}.

%-------------------------------------------------------------------%

\section{Representing the determinant, I}\label{repdet1}%

In this section and the next we are working on a non-singular affine
algebraic curve. Essentially, the base field won't play a role,
so we just assume that $k$ is a field of characteristic zero. Let $C$
be an integral affine curve over $k$ that is non-singular, with
affine coordinate ring $A=k[C]$. If $\xi\in C(k)$ is a $k$-rational
point, the maximal ideal of $A$ corresponding to $\xi$ is denoted
$\m_\xi$.

\begin{lab}\label{ikermu}%
We consider $A\otimes A=A\otimes_kA$ as an $A$-algebra via the
second tensor component, i.e.\ write $a(b\otimes c):=a\otimes(bc)$
for $a,\,b,\,c\in A$.
Let $\mu\colon A\otimes A\to A$ be the product map, let
$I=\ker(\mu)$. For $p\in A$ let $\delta(p):=p\otimes1-1\otimes p$,
an element of~$I$. The $A$-module $I/I^2$ is naturally isomorphic to
the module $\Omega_{A/k}$ of K\"ahler differentials of $A$ over $k$,
via $\Omega_{A/k}\isoto I/I^2$, $dp\mapsto\delta(p)+I^2$ ($p\in A$).
Note that $I$ is the vanishing ideal of the diagonal
$\Delta_C\subset C\times C$, and is an invertible prime ideal of
$A\otimes A$.
\end{lab}

\begin{lem}
The ideal $I$ of $A\otimes A$ is generated by the elements
$\delta(p)$ ($p\in A$).
\end{lem}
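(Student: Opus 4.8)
The statement asserts that the kernel $I$ of the product map $\mu\colon A\otimes A\to A$ is generated, as an ideal of $A\otimes A$, by the elements $\delta(p)=p\otimes1-1\otimes p$ for $p\in A$. The plan is to give the standard argument: let $J\subset A\otimes A$ be the ideal generated by all $\delta(p)$, $p\in A$; clearly $J\subset I$ since $\mu(\delta(p))=p-p=0$, so it remains to prove $I\subset J$. I would do this by a direct elementwise computation, exploiting the chosen $A$-algebra structure on $A\otimes A$ (multiplication on the second factor).

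First I would observe that every element $\theta\in A\otimes A$ can be written as $\theta=\sum_i a_i\otimes b_i$ with $a_i,b_i\in A$, and then, working modulo $J$, use the congruence $a_i\otimes b_i = a_i\otimes b_i - (a_ib_i)\otimes 1 + (a_ib_i)\otimes 1$. The key point is that $a_i\otimes b_i - (a_ib_i)\otimes 1 = -b_i\cdot\delta(a_i)$ in the notation where $b_i$ acts via the second component: indeed $b_i\cdot\delta(a_i) = b_i\cdot(a_i\otimes 1 - 1\otimes a_i) = (a_i\otimes b_i) - (1\otimes a_ib_i)$, and since $1\otimes a_ib_i$ and $(a_ib_i)\otimes 1$ differ by $\delta(a_ib_i)\in J$, we get $a_i\otimes b_i \equiv (a_ib_i)\otimes 1 \pmod J$. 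Summing, $\theta\equiv c\otimes 1\pmod J$ where $c=\sum_i a_ib_i=\mu(\theta)$. Hence if $\theta\in I$, i.e.\ $\mu(\theta)=0$, then $\theta\equiv 0\pmod J$, which gives $\theta\in J$ and completes the proof.

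This argument is entirely formal and works for any commutative $k$-algebra $A$; no curve-theoretic or non-singularity hypotheses are needed, and the only subtlety is bookkeeping with the two tensor slots and the chosen module structure. Accordingly, I expect no genuine obstacle here — the ``hard part'' is merely to be careful that the sign and the side of the module action are tracked correctly in the congruence $a\otimes b \equiv (ab)\otimes 1\pmod J$. One could alternatively phrase the whole thing via the splitting $A\otimes A = (A\otimes 1)\oplus I$ of $A$-modules (with $A\otimes 1\cong A$ via $\mu$), but the explicit computation above is the most transparent route and is what I would write out.
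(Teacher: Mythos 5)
Your proposal is correct and follows essentially the same route as the paper: a direct elementwise computation with $\delta$ using the module action on the second tensor slot. The paper compresses it to the exact identity $\alpha=\sum_i b_i\,\delta(a_i)$ for $\alpha=\sum_i a_i\otimes b_i\in I$ (which in addition shows $I$ is generated by the $\delta(p)$ even as an $A$-submodule), whereas you work modulo the ideal generated by the $\delta(p)$; the content is the same.
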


\begin{proof}
If $\alpha=\sum_ia_i\otimes b_i$ lies in $I$ then
$\alpha=\sum_ib_i\delta(a_i)$.
This proves the lemma (and shows that $I$ is generated by the
$\delta(p)$ even as an $A$-submodule of~$A\otimes A$).
\end{proof}

\begin{lem}\label{ikermugen}%
For $g\in A$ and $\eta\in C(k)$, the following are equivalent:
\begin{itemize}
\item[(i)]
$\ord_\eta(g-g(\eta))=1$;
\item[(ii)]
$dg$ generates the sheaf $\itOmega_{C/k}$ on $C$, locally at
$\eta$;
\item[(iii)]
$\delta(g)$ generates the ideal $I$ of $A\otimes A$, locally at
$(\eta,\eta)$.
\end{itemize}
\end{lem}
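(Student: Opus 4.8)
The plan is to prove the equivalence of (i), (ii) and (iii) in Lemma~\ref{ikermugen} by reducing everything to statements about modules/ideals localized at the relevant point. First I would observe that all three conditions are local at $\eta$ (resp.\ at $(\eta,\eta)$), so I may replace $A$ by its localization $A_{\m_\eta}$, a discrete valuation ring with uniformizer any element of $\m_\eta\setminus\m_\eta^2$. Condition (i) says exactly that $g-g(\eta)$ is a uniformizer of this DVR, i.e.\ $g-g(\eta)$ generates $\m_\eta$ locally; note $dg=d(g-g(\eta))$ so we may as well assume $g(\eta)=0$ and $g\in\m_\eta$.

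The implication (i)$\iff$(ii) is the standard fact that, on a nonsingular curve, $\itOmega_{C/k}$ is an invertible sheaf and $d\colon\m_\eta/\m_\eta^2\isoto\itOmega_{C/k}\otimes k(\eta)$ is an isomorphism (the cotangent space identification). Concretely, $dg$ generates $\itOmega_{C/k}$ locally at $\eta$ iff its image in the one-dimensional $k(\eta)$-vector space $\itOmega_{C/k}\otimes k(\eta)$ is nonzero, which by this isomorphism happens iff the class of $g$ in $\m_\eta/\m_\eta^2$ is nonzero, i.e.\ iff $\ord_\eta(g)=1$. This is essentially a citation of the theory of Kähler differentials on smooth curves, so I would keep it brief.

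For (ii)$\iff$(iii) I would use the identification recalled in \ref{ikermu}: $I$ is an invertible ideal of $A\otimes A$, and $\Omega_{A/k}\isoto I/I^2$ via $dp\mapsto\delta(p)+I^2$. Since $I$ is invertible, $\delta(g)$ generates $I$ locally at $(\eta,\eta)$ iff $\delta(g)$ generates $I/I^2$ locally at $(\eta,\eta)$ as a module over $(A\otimes A)/I\cong A$ — this is Nakayama together with invertibility: a section of an invertible sheaf generates it near a point iff it does not vanish there, iff its image in the fiber (which factors through $I/I^2$) is nonzero. Transporting along the isomorphism $\Omega_{A/k}\cong I/I^2$, this says $dg$ generates $\Omega_{A/k}$ locally at $\eta$, which is (ii). The one point requiring care is matching "locally at $(\eta,\eta)$ in $A\otimes A$" with "locally at $\eta$ in $A$": the diagonal map sends $\Delta_C$ isomorphically onto $C$, so the closed point $(\eta,\eta)$ of $V(I)$ corresponds to $\eta$, and localizing $(A\otimes A)/I=A$ at the maximal ideal of $(\eta,\eta)$ gives $A_{\m_\eta}$; with the $A$-module structure from \ref{ikermu} (via the second factor) this is compatible with the isomorphism $I/I^2\cong\Omega_{A/k}$.

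I expect the main obstacle to be bookkeeping rather than mathematics: carefully checking that "generates the ideal $I$ locally at $(\eta,\eta)$" is equivalent to "generates the fiber $I/I^2$ at $(\eta,\eta)$", which hinges on $I$ being invertible (so that Nakayama applies to a rank-one module and non-vanishing at a point suffices for local generation), and on correctly identifying the local ring and its residue field at $(\eta,\eta)$ with those at $\eta$. Once those identifications are pinned down, each equivalence is a one-line consequence of the dictionary in \ref{ikermu} and the cotangent-space description of $\Omega$ on a smooth curve.
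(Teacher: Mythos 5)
Your proposal is correct and follows essentially the same route as the paper: (i)$\iff$(ii) via the identification of the fiber of $\itOmega_{C/k}$ at $\eta$ with $\m_\eta/\m_\eta^2$, and (ii)$\iff$(iii) via the isomorphism $\Omega_{A/k}\cong I/I^2$ together with Nakayama at the maximal ideal of $(\eta,\eta)$. The only cosmetic difference is that for (ii)$\To$(iii) the paper writes $\delta(p)\in\frac{dp}{dg}\delta(g)+I^2$ for the explicit generators $\delta(p)$ of $I$ and applies Nakayama directly, whereas you package the same step as nonvanishing of a section of the invertible ideal $I$ in its fiber; both are valid.
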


\begin{proof}
(i) and (ii) are equivalent since the stalk of the sheaf
$\itOmega_{C/k}$ at $\eta$ is $\m_\eta/\m_\eta^2$,
and since $dg$ corresponds to the coset of $g-g(\eta)$ in
$\m_\eta/\m_\eta^2$ under this isomorphism.
Also (iii) $\To$ (ii) is clear since $\Omega_{A/k}\cong I/I^2$
via the isomorphism $dg\mapsto\delta(g)+I^2$.
Conversely assume (ii) that $\itOmega_{C/k}$ is generated by $dg$
locally at~$\eta$. Replacing $C$ by a suitable neighborhood of
$\eta$, we may assume that $dg$ generates the $A$-module
$\Omega_{A/k}$. Let $M$ be the maximal ideal of $A\otimes A$
corresponding to $(\eta,\eta)$. Then $I\subset M$, and
$\delta(p)\in\frac{dp}{dg}\delta(g)+I^2\subset\idl{\delta(g)}+MI$
holds for every $p\in A$. Since $I$ is generated by the elements
$\delta(p)$ ($p\in A$), it follows that $I=\idl{\delta(g)}+MI$.
So the Nakayama lemma implies
$I(A\otimes A)_M=\delta(g)(A\otimes A)_M$.
\end{proof}

\begin{lab}\label{lotofdef}%
Now we need quite a bit of notation. Fix an integer $n\ge1$ and put
$A_n=A^{\otimes(n+1)}=A\otimes\cdots\otimes A$, the $(n+1)$-fold
tensor product over~$k$. The tensor components of $A_n$ will be
labelled with $i=0,\dots,n$. For $0\le i\le n$ let
$\varphi_i\colon A\to A_n$ be the $i$-th canonical embedding, i.e.\
$\varphi_i(a)=1\otimes\cdots\otimes a\otimes\cdots\otimes1$ with $a$
at position~$i$. For $0\le i<j\le n$ and $a\in A$ write
$\delta_{ij}(a)=\varphi_i(a)-\varphi_j(a)$.
Moreover let $\mu_{ij}\colon A_n\to A_n$ be the homomorphism that
multiplies the $i$-th and the $j$-th tensor component and puts the
result at position $i$, while putting $1$ at position~$j$. In other
words,
$$\mu_{ij}(a_0\otimes\cdots\otimes a_n)\>=\>
b_0\otimes\cdots\otimes b_n$$
where $b_i=a_ia_j$, $b_j=1$ and $b_\nu=a_\nu$ for
$\nu\in\{0,\dots,n\}\setminus\{i,j\}$.
The ideal $I_{ij}=\ker(\mu_{ij})$ is an invertible prime ideal of
$A_n$.
Let the ideal $\calI$ in $A_n$ be defined by
\begin{equation}\label{dfncali}%
\calI\>:=\>\bigcap_{0\le i<j\le n}I_{ij}\>=\>
\prod_{0\le i<j\le n}I_{ij}
\end{equation}
Intersection and ideal product coincide since the $I_{ij}$ are
pairwise different invertible prime ideals, and since the local rings
of $A_n$ are factorial.
In particular we see that $\calI$ is again an invertible ideal
of $A_n$, which corresponds to the generalized diagonal
$\bigcup_{i<j}\{(x_0,\dots,x_n)\colon x_i=x_j\}$ in $C^{n+1}$.
\end{lab}

\begin{lem}\label{neuansatz}%
For $w\in A$ let $\delta_w\in A_n$ be defined by
$$\delta_w\>=\>\prod_{0\le i<j\le n}\delta_{ij}(w).$$
Fixing an arbitrary point $\xi\in C(k)$, the ideal $\calI$ of $A_n$
is generated by all elements $\delta_w$ where $w\in A$ satisfies
$\ord_\xi(w)=1$.
\end{lem}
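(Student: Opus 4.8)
The plan is to reduce the global statement to the local one established in Lemma \ref{ikermugen}, using that $\calI$ is an invertible ideal and that invertibility can be checked at each maximal ideal of $A_n$. First I would fix $\xi\in C(k)$ and let $\calJ\subset A_n$ denote the ideal generated by all $\delta_w$ with $\ord_\xi(w)=1$. Clearly $\delta_w\in\calI$ for any such $w$, since $\delta_{ij}(w)\in I_{ij}$ by definition of $\mu_{ij}$, so $\delta_w=\prod_{i<j}\delta_{ij}(w)\in\prod_{i<j}I_{ij}=\calI$; hence $\calJ\subset\calI$. The content of the lemma is the reverse inclusion, and since both ideals are finitely generated it suffices to show $\calJ_M=\calI_M$ for every maximal ideal $M$ of $A_n$, or equivalently (as $\calI$ is invertible, hence locally principal) that $\calJ_M$ contains a local generator of $\calI_M$ at every $M$.

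The key step is a case distinction on a maximal ideal $M=(\eta_0,\dots,\eta_n)$ of $A_n$ corresponding to a point $(\eta_0,\dots,\eta_n)\in C^{n+1}$ (after passing to the algebraic closure, or working with the residue field; localization behaves well enough that one may argue at closed points). If the point does \emph{not} lie on the generalized diagonal, i.e.\ $\eta_i\ne\eta_j$ for all $i<j$, then $\calI_M=A_n{}_M$ is the unit ideal, and we must produce some $\delta_w$ with $\ord_\xi(w)=1$ that is a unit at $M$. Pick any $w\in A$ with $\ord_\xi(w)=1$ and $w(\eta_i)\ne w(\eta_j)$ for all $i<j$; such a $w$ exists because $\m_\xi\ne\m_\xi^2$ gives many uniformizers and the conditions $w(\eta_i)\ne w(\eta_j)$ are nonempty Zariski-open conditions on the affine space of functions of bounded pole order, so a generic uniformizer satisfies them — one should phrase this carefully, e.g.\ start from one uniformizer $t$ and perturb by elements of $A$ vanishing to order $\ge 2$ at $\xi$, or simply note $\m_\xi^2$ is not contained in any of the finitely many hyperplanes $\{w:w(\eta_i)=w(\eta_j)\}$ unless $\eta_i=\eta_j$. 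Then $\delta_{ij}(w)=w(\eta_i)-w(\eta_j)\ne0$ in the residue field, so $\delta_w$ is a unit at $M$, whence $\calJ_M=A_n{}_M=\calI_M$.

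If instead the point lies on the diagonal, let $E=\{(i,j):i<j,\ \eta_i=\eta_j\}\ne\emptyset$; then $\calI_M=\prod_{(i,j)\in E}(I_{ij})_M$, the product of the local equations of the components through $M$. Here I would choose $w\in A$ with $\ord_\xi(w)=1$ and, for every $i<j$ with $\eta_i\ne\eta_j$, also $w(\eta_i)\ne w(\eta_j)$, arranged as in the previous paragraph; for the pairs $(i,j)\in E$ one additionally wants $\delta_{ij}(w)$ to be a \emph{local generator} of $(I_{ij})_M$. By Lemma \ref{ikermugen} (applied in the $i,j$ tensor slots, with $\eta:=\eta_i=\eta_j$), this amounts to $\ord_{\eta_i}(w-w(\eta_i))=1$, i.e.\ $w$ is a local uniformizer at $\eta_i$ as well. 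The main obstacle is exactly this simultaneous requirement: $w$ must be a uniformizer at $\xi$ \emph{and} at each of the finitely many points $\eta_i$ appearing on the diagonal through $M$, \emph{and} separate the non-coinciding $\eta$'s. That this can be done is again a genericity/transversality statement about the linear system of sections of a sufficiently positive line bundle on $\ol C$ (or, concretely, about $\m_\xi$ modulo the finitely many conditions ``vanishes to order $\ge2$ at $\eta_i$'' and ``$w(\eta_i)=w(\eta_j)$''): none of these conditions is implied by the others, so a generic $w\in\m_\xi$, or a generic uniformizer obtained by perturbing a fixed one by $\m_\xi^2$, works — one should spell out that the relevant affine subspace is not contained in the union of the finitely many proper closed subsets defined by the bad conditions. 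With such a $w$, for every $M$ we get that $\delta_w$ generates $\calI$ locally at $M$: at diagonal points $\delta_w=\bigl(\prod_{(i,j)\in E}\delta_{ij}(w)\bigr)\cdot(\text{unit})$ and each factor generates $(I_{ij})_M$ locally; at non-diagonal points $\delta_w$ is a unit. Hence $\calJ_M\supset\calI_M$ for all $M$, so $\calJ=\calI$, completing the proof.
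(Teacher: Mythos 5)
Your proposal is correct and follows essentially the same route as the paper: localize at each closed point of $C^{n+1}$, reduce via Lemma \ref{ikermugen} to finding, for the finitely many distinct coordinates of that point, a single $w$ with $\ord_\xi(w)=1$ that is also a uniformizer at each repeated coordinate and separates the distinct ones. The paper settles the existence of such a $w$ in one line by the Chinese remainder theorem (prescribing $1$-jets at finitely many points), which is the clean way to phrase the genericity/perturbation argument you sketch.
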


\begin{proof}
It suffices to prove the lemma locally, in a Zariski neighborhood of
$\uleta=(\eta_0,\dots,\eta_n)$ for any given tuple
$\uleta\in C(k)^{n+1}$.
Fix $\uleta$ and let
$(p_{ij})_{0\le i<j\le n}$ be a family of elements of $A$. Then
$\calI$ is generated by the product
$\prod_{0\le i<j\le n}\delta_{ij}(p_{ij})$ locally at $\uleta$,
provided that the following two conditions hold for each pair $i<j$
of indices:
\begin{itemize}
\item[(1)]
$p_{ij}(\eta_i)\ne p_{ij}(\eta_j)$ if $\eta_i\ne\eta_j$;
\item[(2)]
$\ord_{\eta_i}(p_{ij}-p_{ij}(\eta_i))=1$ if $\eta_i=\eta_j$.
\end{itemize}
This follows by applying Lemma \ref{ikermugen} to $I_{ij}$, for each
pair $i<j$.
To prove the lemma, it therefore suffices to show: Given a finite
number $\xi_0,\dots,\xi_r$ of pairwise different points in $C(k)$,
there exists $w\in A$ with $w(\xi_i)\ne w(\xi_j)$ for $i\ne j$,
with $\ord_{\xi_i}(w-w(\xi_i))=1$ for every $i$, and with
$\ord_\xi(w)=1$. Using the Chinese remainder theorem, it is clear
that this condition is satisfied.
\end{proof}

\begin{lab}\label{allgsetup}%
From now on we are going to impose some additional assumptions on the
curve $C$. Once and for all, we fix a $k$-point $\xi\in C(k)$ on
$C$ and let $\m_\xi$ denote the maximal ideal of $A$ corresponding
to~$\xi$. Moreover we assume that there is an element $t\in A$ such
that
\begin{enumerate}
\item[(A1)]
$\m_\xi=At$ (the principal ideal generated by $t$ in $A$),
\item[(A2)]
the $A$-module $\Omega_{A/k}$ is (freely) generated by $dt$,
\end{enumerate}
compare \ref{addlasspts2}. We'll also fix $t$ for the rest of the
section. If $p\in A$, let $p'=\frac{dp}{dt}\in A$ be defined by
$dp=p'\,dt$. This is well-defined by (A2).
\end{lab}

\begin{lab}\label{nmlzation}%
Since the ring $A_n$ is noetherian, Lemma \ref{neuansatz} implies
that there exists a finite subset $\itLambda$ of $A$ such that the
ideal $\calI$ is generated by the $\delta_w$ ($w\in\itLambda$), and
such that $\ord_\xi(w-t)>1$ (in particular, $\ord_\xi(w)=1$ and
$w'(\xi)=1$) holds for every $w\in\itLambda$.

\end{lab}

We need a few simple observations. The following lemma is easily
verified:

\begin{lem}\label{valtuple}%
Let $V$ be a linear subspace of $A$ with $\dim(V)=n+1$. Then the set
$\{\ord_\xi(p)\colon0\ne p\in V\}$ has cardinality $n+1$.
\qed
\end{lem}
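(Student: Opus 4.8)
The plan is to stratify $V$ by order of vanishing at $\xi$ and count dimensions. Recall from \ref{allgsetup} that $t\in A$ is a local uniformizer at $\xi$ generating $\m_\xi$; since $C$ is non-singular at $\xi$, the graded pieces $\m_\xi^m/\m_\xi^{m+1}$ are one-dimensional $\R$-vector spaces, spanned by the class of $t^m$, and every $0\ne p\in A$ has $\ord_\xi(p)$ a finite non-negative integer. For each integer $m\ge0$ put
$$V_m\>=\>\{p\in V\colon\ord_\xi(p)\ge m\}$$
(with the convention $\ord_\xi(0):=\infty$), a linear subspace of $V$; these form a descending chain $V=V_0\supseteq V_1\supseteq V_2\supseteq\cdots$ that stabilizes at $\{0\}$ because $V$ is finite-dimensional.

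First I would check that $\dim_\R(V_m/V_{m+1})\le1$ for every $m$: the map $V_m\to\m_\xi^m/\m_\xi^{m+1}$, $p\mapsto p+\m_\xi^{m+1}$, is $\R$-linear with kernel exactly $V_{m+1}$, and its target is one-dimensional. Consequently
$$n+1\>=\>\dim_\R V\>=\>\sum_{m\ge0}\dim_\R(V_m/V_{m+1})$$
is a sum of zeros and ones, so exactly $n+1$ of the indices $m$ satisfy $\dim_\R(V_m/V_{m+1})=1$, i.e.\ $V_m\ne V_{m+1}$. Finally, $V_m\ne V_{m+1}$ holds if and only if there is some $p\in V$ with $\ord_\xi(p)=m$ (take $p\in V_m\setminus V_{m+1}$). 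Hence $\{\ord_\xi(p)\colon0\ne p\in V\}$ is precisely the set of these $n+1$ indices, proving the lemma. Equivalently, one may exhibit a basis $q_0,\dots,q_n$ of $V$ with strictly increasing orders $\ord_\xi(q_0)<\cdots<\ord_\xi(q_n)$ by successively clearing leading $t$-adic terms, and then observe that $\ord_\xi(\sum_ic_iq_i)=\ord_\xi(q_j)$ where $j$ is least with $c_j\ne0$.

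There is no serious obstacle here; the only point demanding care is the one-dimensionality of the quotients $\m_\xi^m/\m_\xi^{m+1}$, which is exactly the non-singularity of $C$ at $\xi$ already assumed in this section (equivalently, that $\ord_\xi$ is a discrete rank-one valuation with value group $\mathbb Z$).
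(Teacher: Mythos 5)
Your argument is correct: the filtration $V_m=\{p\in V\colon\ord_\xi(p)\ge m\}$ has graded pieces of dimension at most one because $\ord_\xi$ is a discrete valuation (equivalently, $\m_\xi^m/\m_\xi^{m+1}$ is one-dimensional at the non-singular point $\xi$), and counting dimensions gives exactly $n+1$ distinct orders. The paper omits the proof as "easily verified," and this is precisely the standard argument it has in mind.
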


Given a subspace $V\subset A$ of dimension $n+1$ we write
$\ulm_\xi(V)=(m_0,\dots,m_n)$ if $m_0>\cdots>m_n$ and there
exist elements $p_i\in V$ with $\ord_\xi(p_i)=m_i$ ($i=0,\dots,n$).
Note that any such tuple $\ulp=(p_0,\dots,p_n)$ is a $k$-basis
of~$V$. Given tuples $\ula=(a_0,\dots,a_n)$, $\ulb=(b_0,\dots,b_n)$
of integers, write $\ula\ge\ulb$ if $a_i\ge b_i$ for $i=0,\dots,n$,
and write $\ula>\ulb$ if $\ula\ge\ulb$ and $\ula\ne\ulb$.

\begin{lem}\label{ordxisubspace}%
Let $q_0,\dots,q_n$ be a $k$-basis of $V$, write $a_i=\ord_\xi(q_i)$
($i=0,\dots,n$) and assume $a_0\ge\cdots\ge a_n$. Then
$\ulm_\xi(V)\ge(a_0,\dots,a_n)$.
\end{lem}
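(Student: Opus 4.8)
\emph{Proof plan.}
The plan is to recover both $\ulm_\xi(V)=(m_0,\dots,m_n)$ and the tuple $(a_0,\dots,a_n)$ from a single object, the filtration of $V$ by order of vanishing at $\xi$. For an integer $m$ set
$$V_{\ge m}\>=\>\{p\in V\colon\ord_\xi(p)\ge m\},$$
a $k$-subspace of $V$, with the convention $\ord_\xi(0)=\infty$. The one input that needs a word of justification is a ``no cancellation'' property: since $C$ is non-singular at $\xi$, the function $\ord_\xi$ is the valuation of a discrete valuation ring (equivalently, $\m_\xi=At$ is principal, so a nonzero $p\in A$ can be written $p=t^{\ord_\xi(p)}u$ with $u(\xi)\ne0$), hence $\ord_\xi(p+q)\ge\min\{\ord_\xi(p),\ord_\xi(q)\}$ with equality when $\ord_\xi(p)\ne\ord_\xi(q)$. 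Consequently, if $p_0,\dots,p_s\in A$ have pairwise distinct orders at $\xi$ and $p=\sum_ic_ip_i$ with $c_i\in k$, then $\ord_\xi(p)=\min\{\ord_\xi(p_i)\colon c_i\ne0\}$; in particular such $p_0,\dots,p_s$ are $k$-linearly independent.

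Next I would compute $\dim_kV_{\ge m}$. By definition of $\ulm_\xi(V)$, with $m_0>\cdots>m_n$, there are $p_i\in V$ with $\ord_\xi(p_i)=m_i$; by the previous paragraph these are linearly independent, hence a $k$-basis of $V$ (using $\dim_kV=n+1$, and Lemma \ref{valtuple} to guarantee the $n+1$ distinct values $m_i$ exist). For $p=\sum_ic_ip_i\in V$ one has $p\in V_{\ge m}$ if and only if $m_i\ge m$ for every $i$ with $c_i\ne0$, so $V_{\ge m}=\spn\{p_i\colon m_i\ge m\}$ and
$$\dim_kV_{\ge m}\>=\>\#\{i\colon m_i\ge m\}.$$

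Finally, fix $j\in\{0,\dots,n\}$. Since $a_0\ge\cdots\ge a_n$, the basis elements $q_0,\dots,q_j$ all lie in $V_{\ge a_j}$ and are linearly independent, so $\dim_kV_{\ge a_j}\ge j+1$. By the displayed formula, $\#\{i\colon m_i\ge a_j\}\ge j+1$; as $m_0>\cdots>m_n$, the set of $i$ with $m_i\ge a_j$ is an initial segment of $\{0,\dots,n\}$ of size $\ge j+1$, hence contains $j$, i.e.\ $m_j\ge a_j$. Since $j$ was arbitrary, $\ulm_\xi(V)\ge(a_0,\dots,a_n)$.

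I do not expect any genuine obstacle here: the argument is routine linear algebra over the valuation $\ord_\xi$, and the only substantive point is the non-cancellation property of orders on linear combinations, which is immediate from the non-singularity of $C$ at $\xi$.
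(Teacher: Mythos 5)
Your proof is correct, but it takes a genuinely different route from the paper's. The paper argues by a constructive elimination step: it picks the largest index $i$ with $a_i=a_{i+1}$, replaces $q_i$ by $q_i+cq_{i+1}$ for the unique $c\in k^*$ that strictly increases the vanishing order, observes that the sorted order tuple can only go up, and iterates (termination being guaranteed by Lemma \ref{valtuple}, which bounds the possible orders). You instead read off both tuples from the filtration $V_{\ge m}=\{p\in V\colon\ord_\xi(p)\ge m\}$: the identity $\dim_kV_{\ge m}=\#\{i\colon m_i\ge m\}$ (a consequence of the non-cancellation property of the valuation $\ord_\xi$ at the non-singular point $\xi$, plus Lemma \ref{valtuple}) combined with the trivial lower bound $\dim_kV_{\ge a_j}\ge j+1$ yields $m_j\ge a_j$ for every $j$ in one stroke. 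The two arguments rest on the same underlying fact — that $\ord_\xi$ is a discrete valuation, so elements of distinct orders are independent and a linear combination attains the minimum order — but yours is non-inductive and arguably more transparent, exhibiting $\ulm_\xi(V)$ as the jump sequence of the filtration; the paper's is more algorithmic and mirrors the basis-normalization it needs elsewhere (e.g.\ in \ref{2ndstep}). Both are complete; no gaps.
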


\begin{proof}
If the $a_i$ are pairwise different there is nothing to show.
Otherwise let $i$ be the largest index with $a_i=a_{i+1}$. There is
(unique) $c\in k^*$ with $\ord_\xi(q_i+cq_{i+1})>\ord_\xi(q_i)$. Put
$a'_i=\ord_\xi(q_i+cq_{i+1})$ and $a'_\nu=a_\nu$ for $\nu\ne i$. If
$(\tilde a_0,\dots,\tilde a_n)$ is the weakly descending permutation
of $(a'_0,\dots,a'_n)$ then
$(\tilde a_0,\dots,\tilde a_n)\ge(a_0,\dots,a_n)$ holds.
So the lemma follows by descending induction on~$i$.
\end{proof}

\begin{lem}\label{ordxisubspace2}%
Let $\ulm_\xi(V)=(m_0,\dots,m_n)$, and let $\ulq=(q_0,\dots,q_n)$ be
a linearly independent sequence in $A$ such that
$\ord_\xi(q_i)\ge m_i$ for $i=0,\dots,n$, with strict inequality for
at least one index~$i$. Then $W=\spn(\ulq)$ satisfies
$\ulm_\xi(W)>\ulm_\xi(V)$.
\end{lem}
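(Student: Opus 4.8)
The plan is to reduce to Lemma \ref{ordxisubspace} by first sorting the sequence $\ulq$ by its vanishing orders at $\xi$, and then comparing the resulting weakly decreasing vector with $\ulm_\xi(V)$ in the componentwise partial order.

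First I would set $a_i=\ord_\xi(q_i)$ for $i=0,\dots,n$; these are finite since $\ulq$ is linearly independent, so each $q_i\ne0$. Let $(\tilde a_0,\dots,\tilde a_n)$ be the weakly decreasing rearrangement of $(a_0,\dots,a_n)$. Rearranging $\ulq$ in the same way produces a $k$-basis of $W=\spn(\ulq)$ whose vanishing orders at $\xi$ are $\tilde a_0\ge\cdots\ge\tilde a_n$, so Lemma \ref{ordxisubspace} applies and gives $\ulm_\xi(W)\ge(\tilde a_0,\dots,\tilde a_n)$.

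The key step is to establish $(\tilde a_0,\dots,\tilde a_n)>\ulm_\xi(V)=(m_0,\dots,m_n)$. For the weak inequality $\tilde a_i\ge m_i$: since $m_0>\cdots>m_n$, the hypothesis $a_j\ge m_j$ yields $a_j\ge m_j\ge m_i$ for every $j\le i$, so at least $i+1$ of the numbers $a_0,\dots,a_n$ are $\ge m_i$; hence their $(i+1)$-st largest, which is $\tilde a_i$, satisfies $\tilde a_i\ge m_i$. For strictness: $\sum_i\tilde a_i=\sum_i a_i>\sum_i m_i$ because $a_i\ge m_i$ for all $i$ with strict inequality for at least one index, and this together with $\tilde a_i\ge m_i$ for every $i$ forces $\tilde a_i>m_i$ for at least one $i$. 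Thus $(\tilde a_0,\dots,\tilde a_n)>(m_0,\dots,m_n)$.

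Finally I would chain the two comparisons: from $\ulm_\xi(W)\ge(\tilde a_0,\dots,\tilde a_n)$ and $(\tilde a_0,\dots,\tilde a_n)>\ulm_\xi(V)$, componentwise transitivity (a vector that dominates one strictly larger than $\ulm_\xi(V)$ is itself strictly larger) gives $\ulm_\xi(W)>\ulm_\xi(V)$, as desired. The only point requiring a little care is the elementary rearrangement bookkeeping in the key step; I do not expect any genuine obstacle here.
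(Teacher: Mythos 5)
Your proof is correct and follows essentially the same route as the paper: both arguments reduce to Lemma \ref{ordxisubspace} by sorting the vanishing orders into weakly decreasing form and checking that the sorted vector still strictly dominates $\ulm_\xi(V)$. The only difference is cosmetic --- the paper sorts by successive transpositions (verifying the hypothesis is preserved at each swap), while you sort in one step and verify the componentwise domination directly by a counting and sum argument.
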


\begin{proof}
Let $a_i=\ord_\xi(q_i)$. The assertion is clear from Lemma
\ref{ordxisubspace} if $\ula=(a_0,\dots,a_n)$ is weakly decreasing.
Otherwise there are indices $i<j$ with $a_i<a_j$. Swap $a_i$ and
$a_j$ in $\ula$, then the new sequence $\ula'=(a'_0,\dots,a'_n)$
satisfies the hypothesis in the lemma as well.
After finitely many steps we have therefore reduced to Lemma
\ref{ordxisubspace}.
\end{proof}

\begin{lab}\label{grundlageF}%
For the discussion to follow, fix a linear subspace $V\subset A$ of
dimension $n+1$. Write $\ulm=\ulm_\xi(V)=(m_0,\dots,m_n)$ and let
$\ulp=(p_0,\dots,p_n)$ be a basis of $V$ with $\ord_\xi(p_i)=m_i$
($i=0,\dots,n$). Consider the matrix
\begin{equation}\label{dfnmp}%
M\>=\>M(\ulp)\>=\>\bigl(\varphi_i(p_j)\bigr)_{0\le i,j\le n}
\end{equation}
of size $(n+1)\times(n+1)$ with entries in $A_n$, together with its
determinant
\begin{equation}\label{dfnfp}%
F\>=\>F(\ulp)\>=\>\det M(\ulp)\in A_n.
\end{equation}
Note that $\mu_{ij}(F)=0$ for any pair of indices $0\le i<j\le n$,
since rows $i$ and $j$ of the matrix $\mu_{ij}(M)$ coincide.
Therefore $F$ lies in the ideal $\calI=\bigcap_{i<j}I_{ij}$. Note
also that, up to a nonzero scalar factor in $k$, the determinant
$F=F(\ulp)$ depends only on $V$, and not on the choice of the
basis~$\ulp$.
\end{lab}

\begin{lab}\label{dfnjjm}%
For $0\le i\le n$ write $t_i=\varphi_i(t)$ where $t\in A$ is the
element fixed in \ref{allgsetup}. Let $J=\idl{t_0,\dots,t_n}$, the
ideal generated by $t_0,\dots,t_n$ in $A_n$. By hypothesis (A1), this
is the maximal ideal of $A_n$ corresponding to the point
$(\xi,\dots,\xi)$ on the diagonal. From \ref{schurpolnot} recall the
definition of the Schur polynomial $\sigma_\ulm(x_0,\dots,x_n)$. By
$J(\ulm)$ we denote the ideal in $A_n$ that is generated by those
monomials $\ult^\alpha=t_0^{\alpha_0}\cdots t_n^{\alpha_n}$ that
occur in $\sigma_\ulm(t_0,\dots,t_n)$ with a nonzero coefficient. Our
goal in this section is to arrive at a specific identity for
$F=\det(M)$ (Corollary \ref{cor2nachtrick2}). The first step is to
show:
\end{lab}

\begin{prop}\label{allejm}%
$F=F(\ulp)$ is contained in the ideal product $\calI\cdot J(\ulm)$
(taken in the ring $A_n$).
\end{prop}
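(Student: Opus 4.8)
The plan is to show that $F = \det M(\ulp)$ lies in $\calI$ (this is already noted in \ref{grundlageF}) and, separately, that $F \in J(\ulm)$ — the proposition will then follow as long as the ``combined'' membership can be upgraded to membership in the \emph{product} $\calI\cdot J(\ulm)$, using the invertibility of $\calI$. First I would work locally: since $A_n$ is noetherian and $\calI$ is an invertible ideal, it suffices to check $F \in \calI \cdot J(\ulm)$ after localizing at each maximal ideal, and by Lemma \ref{neuansatz} we may pick a local generator $\delta_w$ of $\calI$ with $\ord_\xi(w-t)>1$. So the real task becomes: write $F = \delta_w \cdot g$ for a unique $g \in (A_n)_{\text{loc}}$, and show $g \in J(\ulm)$ locally at $(\xi,\dots,\xi)$.

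Next I would pass to the complete local ring. Working in the maximal ideal $J = \idl{t_0,\dots,t_n}$ corresponding to $(\xi,\dots,\xi)$, the completion of $A_n$ is a power series ring and each $p_j$, viewed through $\varphi_i$, becomes a power series $p_j(t_i)$ in $t_i$ with $\ord_{t_i} p_j(t_i) = m_j$ (by choice of the basis $\ulp$ with $\ord_\xi(p_j) = m_j$, and using (A1)). Then $M(\ulp)$ is precisely a matrix of the type handled by Lemma \ref{detlem2} — with $r = n$, the power series $p_0,\dots,p_n$ playing the role of the $p_i$ there, and vanishing orders $m_0 > \cdots > m_n$. Lemma \ref{detlem2}(a) gives $\det M = g \cdot \prod_{0\le i<j\le n}(t_i - t_j)$, and part (b) says the cofactor $g$ lies in the ideal generated by the monomials of $\sigma_\ulm(t_0,\dots,t_n)$ — which is exactly $J(\ulm)$. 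Since $\prod_{i<j}(t_i-t_j)$ and $\delta_w = \prod_{i<j}\delta_{ij}(w)$ are both local generators of the invertible ideal $\calI$ at $(\xi,\dots,\xi)$ (the latter by Lemma \ref{neuansatz}, the former because $t$ itself is a local uniformizer and $\delta_{ij}(t) = t_i - t_j$), they differ by a local unit, so we get $F \in \delta_w \cdot J(\ulm)$ locally, hence $F \in \calI \cdot J(\ulm)$ after that localization.

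I expect the main obstacle to be the bookkeeping that reconciles the ``monomials in $t_i$'' picture of Lemma \ref{detlem2} with the ideal $J(\ulm)$ as defined in \ref{dfnjjm}, and more seriously, checking that the local membership ``$F \in \delta_w \cdot J(\ulm)$'' assembles into a \emph{global} statement $F \in \calI \cdot J(\ulm)$ in $A_n$. For the latter: $\calI \cdot J(\ulm)$ is a finitely generated ideal of $A_n$, and membership in a finitely generated ideal can be checked locally at all maximal ideals; at maximal ideals not lying over $(\xi,\dots,\xi)$ one must separately argue that $J(\ulm)$ is the unit ideal there (since the $t_i$ generate $\m_\xi$ only at $\xi$, and some $t_i - t_j$ or some generator of $\calI$ becomes a unit away from the generalized diagonal) — so away from the diagonal $F \in \calI$ suffices, and on the diagonal but away from $(\xi,\dots,\xi)$ one uses that $\sigma_\ulm$ has a nonzero monomial that becomes a unit. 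A clean way to avoid case analysis is to note that $\calI$ is invertible, so $\calI \cdot J(\ulm) = \calI \cap (\calI \cdot J(\ulm) : \calI)$-type manipulations reduce everything to checking that the global section $F/\delta_w$ (well-defined as a rational function since $\calI$ is invertible with local generator $\delta_w$) actually lies in $J(\ulm) \cdot (\text{something})$ — but the honest approach is just the local check combined with the observation that $J(\ulm)$ localizes to the unit ideal everywhere except at $(\xi,\dots,\xi)$, where Lemma \ref{detlem2}(b) applies directly.
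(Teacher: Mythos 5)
Your treatment of the maximal ideal at the diagonal point $(\xi,\dots,\xi)$ is correct and matches the paper: pass to the completion (the paper makes the faithful-flatness step explicit, which you should too), apply Lemma \ref{detlem2} with $r=n$ to get $\det M=g\cdot\prod_{i<j}(t_i-t_j)$ with $g\in J(\ulm)$, and note that $\prod_{i<j}(t_i-t_j)$ generates $\calI$ there. The gap is in your handling of all the \emph{other} points of $C^{n+1}$. Your claim that $J(\ulm)$ localizes to the unit ideal away from $(\xi,\dots,\xi)$ is false: at a point such as $\uleta=(\xi,\dots,\xi,\omega,\dots,\omega)$ with $\omega\ne\xi$, the variables $t_i$ with $\eta_i=\omega$ become units but the others do not, so $J(\ulm)$ localizes to the ideal generated by the monomials of $\sigma_\ulm(t_0,\dots,t_r,1,\dots,1)$, which is proper whenever the partition $\ulm-\delta_n$ has more than one row (the paper's own example with $\ulm=(5,4,3,2,0)$ at $(\xi,\xi,\xi,\omega,\omega)$ gives $J(\ulm)$ locally generated by $t_0t_1,t_0t_2,t_1t_2$). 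Likewise, your fallback ``$\sigma_\ulm$ has a nonzero monomial that becomes a unit'' fails in general: if $m_n>0$, every monomial of $\sigma_\ulm$ involves every variable (the first column of any admissible filling is forced to be $0,1,\dots,n$), and nothing in Section \ref{repdet1} assumes $m_n=0$.

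What is actually needed at a mixed point --- say $\eta_0=\cdots=\eta_r=\xi$ and the remaining components grouped by coincidence elsewhere --- is the argument the paper gives: apply Lemma \ref{detlem2} to the block of rows at $\xi$ to extract $\prod_{0\le i<j\le r}\delta_{ij}(t)$ together with a cofactor lying in the localized $J(\ulm)$; then apply Lemma \ref{detlem2} again to each remaining block of coinciding components to extract the other local generators of $\calI$; and finally observe that these extra factors $t_i-t_j$ (with $i,j>r$) are non-zero-divisors modulo $J(\ulm)$, because locally $J(\ulm)$ is generated by monomials in $t_0,\dots,t_r$ only, so that dividing them out keeps the cofactor inside $J(\ulm)$. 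This last cancellation step is essential and is entirely absent from your plan; without it the local memberships at the mixed points do not follow, and the global statement cannot be assembled.
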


\begin{proof}%\label{allejmproof}%
It suffices to prove the proposition after extension of the base
field.
We therefore assume that $k$ is algebraically closed (this is just to
simplify language and notation).
It suffices to argue in the local ring of any given tuple
$\uleta=(\eta_0,\dots,\eta_n)\in C(k)^{n+1}$. Let
$\scrO_\uleta=\scrO_{C^{n+1},\uleta}$ be the local ring at $\uleta$
and let $\wh\scrO_\uleta$ be its completion. Since the ring extension
$\scrO_\uleta\subset\wh\scrO_\uleta$ is faithfully flat
\cite[\S8]{Mat}, it suffices to show
$F\in\calI J(\ulm)\wh\scrO_\uleta$.

After a suitable permutation of the components, we may assume that
$\eta_0=\cdots=\eta_r=\xi$ (the point on $C$ fixed in
\ref{allgsetup}) and $\eta_i\ne\xi$ for $i=r+1,\dots,n$. Locally at
$\uleta$, the ideal $\calI$ is generated by
$$\prod_{i<j\text{ with }\eta_i=\eta_j}\delta_{ij}(t),$$
according to Lemma \ref{ikermugen} (this uses hypothesis (A2) from
\ref{allgsetup}).
On the other hand, the element $t\in A$ is a unit in
$\scrO_{C,\xi'}$ for every $\xi'\ne\xi$ in $C(k)$, by hypothesis (A1)
in \ref{allgsetup}.
Locally at~$\uleta$, this implies that the ideal $J(\ulm)$ is
generated by all monomials $t_0^{e_0}\cdots t_r^{e_r}$ that occur in
$\sigma_\ulm(t_0,\dots,t_r,1,\dots,1)$.
Proposition \ref{allejm} therefore follows from a repeated
application of Lemma \ref{detlem2}: First apply the lemma to rows
$0,\dots,r$ of $M=M(\ulp)$. Thereby we extract the factor
$\prod_{0\le i<j\le r}\delta_{ij}(t)$ and get a cofactor that is
contained in $J(\ulm)$. Then apply \ref{detlem2} successively to the
groups of rows whose indices lie in $\{i\colon\eta_i=\omega\}$, for
each of the remaining components $\omega$ of $\uleta$. This completes
the proof of the lemma: If $i<j$ are indices with $i,\,j>r$, the
factor $\delta_{ij}(t)=t_i-t_j$ is not a zero divisor modulo
$J(\ulm)$ since $J(\ulm)$ is generated by monomials in
$t_0,\dots,t_r$. Therefore $\delta_{ij}(t)G\in J(\ulm)$ implies
$G\in J(\ulm)$ for $G\in\scrO$.
\end{proof}

In more explicit terms, Proposition \ref{allejm} states:

\begin{cor}\label{cor2allejm}%
Under the assumptions made in \ref{grundlageF}, there exists an identity
\begin{equation}\label{sumgede}%
F(\ulp)\>=\>\sum_{w\in\itLambda}g_w\delta_w
\end{equation}
in $A_n$ with $g_w\in J(\ulm)$ for every $w\in\itLambda$.
\qed
\end{cor}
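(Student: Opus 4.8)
The plan is to read off the statement directly from Proposition \ref{allejm} together with the choice of the finite set $\itLambda$ made in \ref{nmlzation}; no new ideas are needed, since all the substance is already in Proposition \ref{allejm}. First, Proposition \ref{allejm} tells us that $F=F(\ulp)$ lies in the ideal product $\calI\cdot J(\ulm)$ in $A_n$. Second, by the construction in \ref{nmlzation} the ideal $\calI$ is generated (even as an $A_n$-module, by the lemma preceding \ref{neuansatz}) by the finitely many elements $\delta_w$ with $w\in\itLambda$, i.e.\ $\calI=\sum_{w\in\itLambda}A_n\delta_w$.

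The only thing to invoke then is the elementary identity $\calI\cdot J(\ulm)=\sum_{w\in\itLambda}\delta_w\,J(\ulm)$: each generator $\delta_w g$ of the left-hand side with $g\in J(\ulm)$ obviously lies in the right-hand side, and conversely any element $a\cdot g$ with $a=\sum_{w}c_w\delta_w\in\calI$ and $g\in J(\ulm)$ equals $\sum_w\delta_w(c_wg)$ with $c_wg\in J(\ulm)$. Hence $F$ lies in $\sum_{w\in\itLambda}\delta_w\,J(\ulm)$, which is precisely the set of elements expressible as $\sum_{w\in\itLambda}g_w\delta_w$ with $g_w\in J(\ulm)$ for each $w$. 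This gives the asserted identity \eqref{sumgede}, and there is no real obstacle: the corollary is merely a restatement of the membership $F\in\calI J(\ulm)$ relative to the fixed finite generating family $\{\delta_w:w\in\itLambda\}$ of $\calI$.
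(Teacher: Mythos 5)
Your proof is correct and is exactly the reasoning the paper leaves implicit: the corollary is introduced with ``In more explicit terms, Proposition \ref{allejm} states:'' and carries a \qed, precisely because $F\in\calI\cdot J(\ulm)$ combined with the generating set $\{\delta_w\colon w\in\itLambda\}$ of $\calI$ from \ref{nmlzation} immediately yields $\calI\cdot J(\ulm)=\sum_{w\in\itLambda}\delta_w\,J(\ulm)$, using that $J(\ulm)$ is an ideal. No issues.
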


\begin{example}
To illustrate the proof of Proposition \ref{allejm} with a concrete
example, let $n=4$ and $\ulm=(5,4,3,2,0)$. So $\ulp=(p_0,\dots,p_4)$
is a tuple in $A$ with $\ord_\xi(p_i)=m_i$ for $i=0,\dots,4$.
The Schur polynomial is
$$\sigma_\ulm(x)\>=\>x_0x_1x_2x_3+x_0x_1x_2x_4+x_0x_1x_3x_4+
x_0x_2x_3x_4+x_1x_2x_3x_4,$$
the fourth elementary symmetric polynomial in $x_0,\dots,x_4$. Let
$\omega\ne\xi$ be a point in $C(k)$. By way of example, let us show
that $F=F(\ulp)$ lies in $\calI J(\ulm)$ locally at
$\uleta=(\xi,\xi,\xi,\omega,\omega)\in C(k)^5$. Let $u=t-t(\omega)$,
a local parameter at $\omega$, let $\scrO=\scrO_{C^5,\uleta}$ and let
$\wh\scrO$ be the completion of $\scrO$. Then $\wh\scrO$ is
identified with the formal power series ring
$k[[t_0,t_1,t_2,u_3,u_4]]$. In $\scrO$, the ideal $J(\ulm)$ is
generated by $t_0t_1,t_0t_2,t_1t_2$ since $t_3,t_4$ are units at
$\uleta$. Since $\calI$ is generated by
$\delta_{01}(t)\delta_{02}(t)\delta_{12}(t)\delta_{34}(t)$ at
$\uleta$, we have to show
$$F\>\in\>(t_0-t_1)(t_0-t_2)(t_1-t_2)(t_3-t_4)\cdot
\idl{t_0,t_1,t_2}$$
in $\wh\scrO$.
By Lemma \ref{detlem2}(b), $F$ is divisible by
$\delta_{01}(t)\delta_{02}(t)\delta_{12}(t)$, and the cofactor $G$ is
a power series, each of whose monomials is divisible by a monomial of
$\sigma_\ulm(t_0,t_1,t_2,1,1)$.
In other words $G\in J(\ulm)$. On the other hand, $F$ is
divisible by $\delta_{34}(u)=t_3-t_4$ as well (again by
\ref{detlem2}), say $F=(t_3-t_4)H$. And
$G\in J(\ulm)=\idl{t_0t_1,t_0t_2,t_1t_2}$ implies $H\in J(\ulm)$ as
well,
which completes what we wanted to prove.
\end{example}

\begin{lab}\label{2ndstep}%
In a second step we are going to refine identity \eqref{sumgede}.
As before, consider a $k$-basis $\ulp=(p_0,\dots,p_n)$ of $V$ with
$\ord_\xi(p_i)=m_i$, where $(m_0,\dots,m_n)=\ulm_\xi(V)=\ulm$. To get
rid of inessential constants we scale the $p_i$ in such a way that
$\ord_\xi(p_i-t^{m_i})>m_i$ for $i=0,\dots,n$.
By the normalization made in \ref{nmlzation}, this also implies
$\ord_\xi(p_i-w^{m_i})>m_i$ ($i=0,\dots,n$) for every
$w\in\itLambda$.
\end{lab}

\begin{lem}\label{jmjmm}%
Let $\ulq=(q_0,\dots,q_n)$ be a second tuple in $A$, and assume that
$\ord_\xi(q_i)\ge m_i$ holds for all $i$, with strict inequality for
at least one index~$i$. Then $F(\ulq)\in JJ(\ulm)\cdot\calI$.
\end{lem}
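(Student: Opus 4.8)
The plan is to reduce the claim to Proposition \ref{allejm} applied to the subspace $W=\spn(\ulq)$, using the combinatorial comparison of Schur polynomials in Lemma \ref{schurlem1a} to upgrade the resulting containment from $\calI\cdot J(\ulm')$ to $\calI\cdot J\cdot J(\ulm)$. First I would dispose of the degenerate case: if $q_0,\dots,q_n$ are linearly dependent over $k$, say $\sum_jc_jq_j=0$ with $(c_0,\dots,c_n)\ne0$, then applying the $k$-algebra homomorphism $\varphi_i$ gives $\sum_jc_j\varphi_i(q_j)=0$ for every $i$, so the columns of $M(\ulq)$ are $k$-linearly dependent and hence $F(\ulq)=\det M(\ulq)=0$; then there is nothing to prove. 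So I may assume the $q_i$ are linearly independent and put $W=\spn(\ulq)$, a subspace of $A$ of dimension $n+1$.

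Next, by Lemma \ref{ordxisubspace2} the hypothesis that $\ord_\xi(q_i)\ge m_i$ for all $i$, with strict inequality for at least one index, yields $\ulm':=\ulm_\xi(W)>\ulm=\ulm_\xi(V)$, i.e.\ $\ulm'\ge\ulm$ componentwise and $\ulm'\ne\ulm$. Both $\ulm$ and $\ulm'$ are strictly decreasing sequences of non-negative integers of length $n+1$, so Lemma \ref{schurlem1a} applies with $\ula=\ulm$ and $\ulb=\ulm'$: every monomial occurring in $\sigma_{\ulm'}(x)$ is properly divisible by a monomial occurring in $\sigma_\ulm(x)$. Reading this in the variables $t_0,\dots,t_n$, each generator $\ult^\beta$ of $J(\ulm')$ factors as $\ult^\beta=\ult^\alpha\cdot\ult^\gamma$ where $\ult^\alpha$ is a generator of $J(\ulm)$ and $|\gamma|\ge1$, hence $\ult^\gamma\in J=\idl{t_0,\dots,t_n}$. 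Therefore $J(\ulm')\subset J\cdot J(\ulm)$.

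Finally, for any basis $\ulq'$ of $W$ one has $M(\ulq')=M(\ulq)\cdot T$ with $T\in GL_{n+1}(k)$ the change-of-basis matrix, so $F(\ulq)$ differs only by a nonzero scalar in $k$ from $F$ computed on a basis of $W$ adapted to $\ulm'$; thus Proposition \ref{allejm}, applied with $W$ in place of $V$ (its proof uses nothing about $V$ beyond its dimension and the adapted basis, together with the fixed data $\xi$, $t$, $\itLambda$), gives $F(\ulq)\in\calI\cdot J(\ulm')\subset\calI\cdot J\cdot J(\ulm)=JJ(\ulm)\cdot\calI$, which is the assertion. The only real content here is the inclusion $J(\ulm')\subset JJ(\ulm)$, which is exactly where the strictness of the inequality in the hypothesis is used and where Lemma \ref{schurlem1a} does the work; I do not expect a genuine obstacle, since the two inputs (Proposition \ref{allejm} and Lemma \ref{schurlem1a}) are already in place and everything else is bookkeeping.
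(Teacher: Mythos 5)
Your proposal is correct and follows essentially the same route as the paper: dispose of the linearly dependent case, apply Lemma \ref{ordxisubspace2} to get $\ulm_\xi(W)>\ulm$, deduce $J(\ulm_\xi(W))\subset JJ(\ulm)$ from Lemma \ref{schurlem1a}, and conclude via Proposition \ref{allejm} applied to $W$. Your extra remarks (why the determinant vanishes in the dependent case, and the change-of-basis scalar) only make explicit what the paper leaves implicit.
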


\begin{proof}
If $\ulq$ is $k$-linearly dependent there is nothing to show.
Otherwise let $W=\spn(\ulq)$ and put $\ula=\ulm_\xi(W)$. By Lemma
\ref{ordxisubspace2} we have $\ula>\ulm$ and therefore
$J(\ula)\subset JJ(\ulm)$ (cf.\ Lemma \ref{schurlem1a}). Applying
Proposition \ref{allejm} to $W$ and the sequence $\ulq$ gives
$F(\ulq)\in J(\ula)\calI\subset JJ(\ulm)\calI$, thereby proving the
assertion.
\end{proof}

\begin{lem}\label{vorhbemerk}%
For each $w\in\itLambda$ we may refine \eqref{sumgede} to an
identity
$$F(\ulp)\>=\>\sigma_\ulm(t_0,\dots,t_n)\delta_w+\sum_{v\in\itLambda}
g_v\delta_v,$$
where the $g_v$ are elements lying in $JJ(\ulm)$ for every
$v\in\itLambda$.
\end{lem}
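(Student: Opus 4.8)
The plan is to fix an element $w\in\itLambda$ and to re-derive the decomposition of $F(\ulp)$ directly from the definition $F(\ulp)=\det\bigl(\varphi_i(p_j)\bigr)_{0\le i,j\le n}$, this time keeping track of the leading term. By the normalization carried out in \ref{2ndstep} we have $\ord_\xi(p_j-w^{m_j})>m_j$ for $j=0,\dots,n$; put $q_j:=p_j-w^{m_j}$. The $j$-th column of the matrix $M(\ulp)$ is the sum of the column $\bigl(\varphi_i(w^{m_j})\bigr)_{0\le i\le n}$ and the column $\bigl(\varphi_i(q_j)\bigr)_{0\le i\le n}$, so expanding the determinant multilinearly in its columns gives
$$F(\ulp)=\sum_{S\subseteq\{0,\dots,n\}}F\bigl(r^{(S)}_0,\dots,r^{(S)}_n\bigr),$$
where for $S\subseteq\{0,\dots,n\}$ the tuple $(r^{(S)}_0,\dots,r^{(S)}_n)$ is defined by $r^{(S)}_j=w^{m_j}$ for $j\notin S$ and $r^{(S)}_j=q_j$ for $j\in S$.

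Next I would treat the two kinds of summands separately. For $S=\emptyset$, using $\varphi_i(w^{m_j})=\varphi_i(w)^{m_j}$ and Jacobi's bialternant identity \eqref{bialtfml} with the substitution $x_i\mapsto\varphi_i(w)$ and $\ula=\ulm$, the corresponding summand equals $\sigma_\ulm\bigl(\varphi_0(w),\dots,\varphi_n(w)\bigr)\cdot\delta_w$. For $S\ne\emptyset$, the tuple $(r^{(S)}_0,\dots,r^{(S)}_n)$ satisfies $\ord_\xi(r^{(S)}_j)\ge m_j$ for all $j$ (with equality for $j\notin S$, since $\ord_\xi(w)=1$) and strict inequality for every $j\in S$; hence Lemma \ref{jmjmm} applies and gives $F\bigl(r^{(S)}_0,\dots,r^{(S)}_n\bigr)\in JJ(\ulm)\cdot\calI$. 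Since $\calI$ is generated by the elements $\delta_v$ ($v\in\itLambda$) according to \ref{nmlzation} (and each $(\delta_v)$ is principal), the ideal $JJ(\ulm)\cdot\calI$ consists exactly of the sums $\sum_{v\in\itLambda}g_v\delta_v$ with $g_v\in JJ(\ulm)$, so every summand with $S\ne\emptyset$ already has the required form.

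It remains to replace $\sigma_\ulm\bigl(\varphi_0(w),\dots,\varphi_n(w)\bigr)$ by $\sigma_\ulm(t_0,\dots,t_n)$ in the term $S=\emptyset$, modulo $JJ(\ulm)$, and this last estimate is the one point that really needs care. Here I would use that $\ord_\xi(w-t)>1$ (part of the normalization in \ref{nmlzation}) together with hypothesis (A1): then $w-t\in\m_\xi^2=At^2$, say $w-t=t^2s$ with $s\in A$, so that $\varphi_i(w)-t_i=t_i^2\varphi_i(s)\in t_iJ$. For each monomial $\ult^\alpha$ occurring in $\sigma_\ulm(t_0,\dots,t_n)$ one gets
$$\prod_{i=0}^n\varphi_i(w)^{\alpha_i}-\ult^\alpha=\ult^\alpha\Bigl(\prod_{i=0}^n\bigl(1+t_i\varphi_i(s)\bigr)^{\alpha_i}-1\Bigr)\in\ult^\alpha J,$$
because the bracketed factor lies in $J$ (it maps to $0$ in $A_n/J=k$). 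Since $\ult^\alpha\in J(\ulm)$ by the definition of $J(\ulm)$ in \ref{dfnjjm}, summing over the monomials of $\sigma_\ulm$ yields $\sigma_\ulm\bigl(\varphi_0(w),\dots,\varphi_n(w)\bigr)-\sigma_\ulm(t_0,\dots,t_n)\in J(\ulm)J=JJ(\ulm)$. Multiplying this membership by $\delta_w\in\calI$ and combining with the contributions of the summands $S\ne\emptyset$, we obtain
$$F(\ulp)=\sigma_\ulm(t_0,\dots,t_n)\,\delta_w+\sum_{v\in\itLambda}g_v\delta_v$$
with all $g_v\in JJ(\ulm)$, which is the asserted identity. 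The main obstacle is the comparison just described between $\sigma_\ulm$ evaluated at the arguments $\varphi_i(w)$ and at the arguments $t_i$; everything else reduces to multilinearity of the determinant together with direct applications of Lemma \ref{jmjmm} and of the bialternant identity.
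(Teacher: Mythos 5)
Your proof is correct and follows essentially the same route as the paper's: multilinear expansion of the determinant in the columns via $p_j=w^{m_j}+q_j$, Lemma \ref{jmjmm} for every term involving some $q_j$, and the bialternant identity for the leading term. The only difference is that you verify the congruence $\sigma_\ulm\bigl(\varphi_0(w),\dots,\varphi_n(w)\bigr)\equiv\sigma_\ulm(t_0,\dots,t_n)\pmod{JJ(\ulm)}$ in detail, using $\ord_\xi(w-t)>1$ and (A1), whereas the paper simply asserts it as a consequence of the normalization in \ref{nmlzation}; your verification is correct.
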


\begin{proof}
Fix $w\in\itLambda$. Then $p_i=w^{m_i}+q_i$ holds with
$\ord_\xi(q_i)>m_i$ ($i=0,\dots,n$), by the normalization made in
\ref{2ndstep}. By multilinearity of the determinant, this implies
$$F(\ulp)\>=\>F(w^{m_0},\dots,w^{m_n})+\sum_\nu
F(\tilde q_{\nu0},\dots,\tilde q_{\nu n})$$
(sum over $2^{n+1}-1$ indices~$\nu$) where
$\tilde q_{\nu i}\in\bigl\{w^{m_i},\,q_i\bigr\}$, and where for each
index $\nu$ there exists at least one index $i$ with
$\tilde q_{\nu i}=q_i$. Lemma \ref{jmjmm} shows that
$F(\tilde q_{\nu0},\dots,\tilde q_{\nu n})\in JJ(\ulm)\calI$ for
each~$\nu$. This proves the lemma since
$$F(w_0^{m_0},\dots,w_n^{m_n})\>=\>\sigma_\ulm(w_0,\dots,w_n)\cdot
\delta_0$$
(bialternant formula \eqref{bialtfml}) and since
$$\sigma_\ulm(w_0,\dots,w_n)\>\equiv\>\sigma_\ulm(t_0,\dots,t_n)\
(\text{mod }JJ(\ulm))$$
by the assumption made in \ref{nmlzation}.
\end{proof}

\begin{prop}\label{neutrick}%
There is an identity $F(\ulp)=\sum_{w\in\itLambda}g_w\delta_w$ such
that $g_w$ lies in
$\frac1{|\itLambda|}\sigma_\ulm(t_0,\dots,t_n)+JJ(\ulm)$ for every
$w\in\itLambda$.
\end{prop}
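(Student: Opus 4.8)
The plan is to average the $|\itLambda|$ identities furnished by Lemma \ref{vorhbemerk}, one for each choice of $w\in\itLambda$, and then check that the averaged identity has the asserted form. Concretely, for every $w\in\itLambda$ we write, using Lemma \ref{vorhbemerk},
\begin{equation*}
F(\ulp)\>=\>\sigma_\ulm(t_0,\dots,t_n)\delta_w+\sum_{v\in\itLambda}g_v^{(w)}\delta_v
\end{equation*}
with all $g_v^{(w)}\in JJ(\ulm)$. Summing over $w\in\itLambda$ and dividing by $|\itLambda|$ gives
\begin{equation*}
F(\ulp)\>=\>\sum_{w\in\itLambda}\Bigl(\tfrac1{|\itLambda|}\sigma_\ulm(t_0,\dots,t_n)+\tfrac1{|\itLambda|}\sum_{v\in\itLambda}g_w^{(v)}\Bigr)\delta_w,
\end{equation*}
so setting $g_w:=\tfrac1{|\itLambda|}\sigma_\ulm(t_0,\dots,t_n)+\tfrac1{|\itLambda|}\sum_{v\in\itLambda}g_w^{(v)}$ we obtain an identity of the required shape: the coefficient of $\delta_w$ differs from $\tfrac1{|\itLambda|}\sigma_\ulm(t_0,\dots,t_n)$ by an element of $JJ(\ulm)$, since $JJ(\ulm)$ is an ideal and hence closed under sums and under the $k$-linear operation of dividing by the integer $|\itLambda|$ (recall $k$ has characteristic zero, so $|\itLambda|$ is invertible in $k$).

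The only point that needs a word of care is the bookkeeping: Lemma \ref{vorhbemerk} produces, \emph{for each fixed} $w$, a full expansion $F(\ulp)=\sigma_\ulm(t_0,\dots,t_n)\delta_w+\sum_{v}g_v^{(w)}\delta_v$ in which the summation index $v$ again ranges over all of $\itLambda$ (including $v=w$). When we sum these $|\itLambda|$ identities, the contribution to the coefficient of a given $\delta_w$ comes from the distinguished term $\sigma_\ulm(t_0,\dots,t_n)\delta_w$ of the $w$-th identity, plus the term $g_w^{(v)}\delta_w$ coming from the $v$-th identity for each $v\in\itLambda$; after dividing by $|\itLambda|$ this is exactly the $g_w$ written above. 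No cancellation or rearrangement beyond linearity is involved, and no genuine obstacle arises here — the work was already done in establishing Lemmas \ref{jmjmm} and \ref{vorhbemerk}, and in particular in the normalization \ref{nmlzation} that makes $\sigma_\ulm(w_0,\dots,w_n)\equiv\sigma_\ulm(t_0,\dots,t_n)\pmod{JJ(\ulm)}$ for every $w\in\itLambda$. Thus the averaging trick is purely formal, and this is the step I expect to be routine rather than the main obstacle; the substance of the argument lies in the preceding results.
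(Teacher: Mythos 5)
Your proposal is correct and is exactly the paper's argument: average the $|\itLambda|$ identities from Lemma \ref{vorhbemerk} and divide by $|\itLambda|$, noting that $JJ(\ulm)$ absorbs the sum of the error terms. The extra bookkeeping you supply is accurate but the paper treats it as immediate.
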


\begin{proof}
For each $w\in\itLambda$ fix an identity
$$F(\ulp)\>=\>\sigma_\ulm(t_0,\dots,t_n)\delta_w+\sum_{v\in\itLambda}
g_{wv}\delta_v$$
with $g_{wv}\in JJ(\ulm)$ for all $v\in\itLambda$, see Lemma
\ref{vorhbemerk}. Adding all these identities and dividing by
$|\itLambda|$ gives an identity as asserted.
\end{proof}

Alternatively we may phrase Proposition \ref{neutrick} as follows:

\begin{cor}\label{cor2nachtrick2}%
There is an identity
\begin{equation}\label{cor2nachtrick2eq}%
F(\ulp)\>=\>\frac1{|\itLambda|}\sum_{w\in\itLambda}\sum_\alpha
t_0^{\alpha_0}\cdots t_n^{\alpha_n}(1+g_{w,\alpha})\cdot\delta_w
\end{equation}
where the inner sum is taken over all monomials $x^\alpha$ occuring
in $\sigma_\ulm(x)$, and where $g_{w,\alpha}\in J$ for all $w$
and~$\alpha$.
\qed
\end{cor}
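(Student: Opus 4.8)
The plan is to derive the identity directly from Proposition \ref{neutrick}, essentially by expanding everything monomial by monomial. By that proposition we have an identity $F(\ulp)=\sum_{w\in\itLambda}g_w\delta_w$ in $A_n$ in which each $g_w$ lies in the coset $\frac1{|\itLambda|}\sigma_\ulm(t_0,\dots,t_n)+JJ(\ulm)$; write $g_w=\frac1{|\itLambda|}\sigma_\ulm(t_0,\dots,t_n)+h_w$ with $h_w\in JJ(\ulm)=J\cdot J(\ulm)$. So the whole task is to rewrite each $g_w$ in the shape required by \eqref{cor2nachtrick2eq}.

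First I would expand the Schur term using the combinatorial description recalled in \ref{combinatschur}: $\sigma_\ulm(t_0,\dots,t_n)=\sum_\alpha t_0^{\alpha_0}\cdots t_n^{\alpha_n}$, where the sum runs over the admissible fillings of $Y_\ulm$, so that a given monomial $t^\alpha$ is counted with the multiplicity with which it occurs in $\sigma_\ulm$. (This is how the phrase ``all monomials $x^\alpha$ occurring in $\sigma_\ulm(x)$'' in the statement is to be read.) Next I would use that, by definition, $J(\ulm)$ is the ideal of $A_n$ generated by precisely these monomials $t^\alpha$: hence any element of the product ideal $JJ(\ulm)$ can be written as $\sum_\alpha t^\alpha b_\alpha$ with $b_\alpha\in J$. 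Indeed, a generator of $J\cdot J(\ulm)$ has the form $x\cdot y$ with $x\in J$ and $y=\sum_\alpha t^\alpha a_\alpha\in J(\ulm)$ (with $a_\alpha\in A_n$), and then $xy=\sum_\alpha t^\alpha(xa_\alpha)$ with $xa_\alpha\in J$; a finite sum of such expressions is again of this form.

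Applying this to $h_w$, I would fix an expression $h_w=\sum_\alpha t^\alpha b_{w,\alpha}$ with $b_{w,\alpha}\in J$, and then combine it with the expansion of the Schur term:
$$g_w\>=\>\frac1{|\itLambda|}\sum_\alpha t^\alpha+\sum_\alpha t^\alpha b_{w,\alpha}\>=\>\frac1{|\itLambda|}\sum_\alpha t_0^{\alpha_0}\cdots t_n^{\alpha_n}\bigl(1+g_{w,\alpha}\bigr),$$
where $g_{w,\alpha}:=|\itLambda|\,b_{w,\alpha}\in J$ (the ideal $J$ being closed under multiplication by the integer $|\itLambda|$). Plugging these expressions back into $F(\ulp)=\sum_{w\in\itLambda}g_w\delta_w$ produces exactly identity \eqref{cor2nachtrick2eq}.

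I do not expect a genuine obstacle here, since the statement is only a reformulation of Proposition \ref{neutrick}. The single point that requires a little care is the bookkeeping of the coefficients of the Schur polynomial: to obtain the uniform shape $1+g_{w,\alpha}$ with constant term $1$ rather than $c_\alpha+g_{w,\alpha}$ with an arbitrary positive integer $c_\alpha$, the inner sum in \eqref{cor2nachtrick2eq} must be understood as ranging over the admissible fillings of $Y_\ulm$ (that is, over monomials counted with multiplicity), not over the distinct monomials of $\sigma_\ulm$.
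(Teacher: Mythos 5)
Your argument is correct and coincides with what the paper leaves implicit: Corollary \ref{cor2nachtrick2} is presented as a direct rephrasing of Proposition \ref{neutrick}, obtained exactly as you do by expanding $\sigma_\ulm(t_0,\dots,t_n)$ into its monomials and writing the $JJ(\ulm)$-remainder as $\sum_\alpha t^\alpha b_\alpha$ with $b_\alpha\in J$, using that $J(\ulm)$ is by definition generated by those monomials. Your closing remark is also the right reading of the statement: the inner sum must be taken over monomials counted with multiplicity (i.e.\ over admissible fillings of $Y_\ulm$), since otherwise a coefficient $c_\alpha>1$ in $\sigma_\ulm$ could not be absorbed into a factor of the form $1+g_{w,\alpha}$ with $g_{w,\alpha}\in J$.
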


%-------------------------------------------------------------------%

\section{Representing the determinant, II}\label{repdet2}%

We keep the hypotheses made in \ref{allgsetup}. So $C$ is an
irreducible and non-singular affine curve over a field $k$ of
characteristic~$0$, and $\xi\in C(k)$ is a fixed point. The element
$t\in A=k[C]$ is a local uniformizer at $\xi$, and the $A$-module
$\Omega_{C/k}$ is freely generated by~$dt$. For $p\in A$ we keep
the notation $p'=\frac{dp}{dt}$, and write
$p^{(i)}=\frac d{dt}p^{(i-1)}$ ($i\ge1$) for the iterated
$\frac d{dt}$-derivatives.

Recall that $A\otimes A$ is regarded as an $A$-module via the
second tensor component, and that $I\subset A\otimes A$ denotes the
kernel of the product map $\mu\colon A\otimes A\to A$. The ideal $I$
is generated by the $\delta(p)=p\otimes1-1\otimes p$ ($p\in A$). For
$r\ge1$ let $I^r$ denote the $r$-th ideal power of $I$. Note that
$\alpha\beta\equiv\mu(\alpha)\beta$ (mod~$I^{r+1}$) holds for
$\alpha\in A\otimes A$ and $\beta\in I^r$.

\begin{lem}\label{taylorlem}%
For $f\in A$ and every integer $r\ge1$, the congruence
\begin{equation}\label{taylorcong}%
\delta(f)\>\equiv\>f'\cdot\delta(t)+\frac1{2!}f''\cdot\delta(t)^2+
\cdots+\frac1{r!}f^{(r)}\cdot\delta(t)^r\ {\rm(mod\ }I^{r+1})
\end{equation}
holds in $A\otimes A$.
\end{lem}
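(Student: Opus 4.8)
The plan is to prove the congruence by induction on $r$. The base case $r=1$ is the assertion that $\delta(f)\equiv f'\,\delta(t)\pmod{I^2}$. This is exactly the statement that the isomorphism $\Omega_{A/k}\isoto I/I^2$, $dp\mapsto\delta(p)+I^2$, carries $df=f'\,dt$ to $f'\,\delta(t)+I^2$; equivalently, it follows from the general identity $\delta(gh)=\delta(g)h+g\delta(h)$ (viewing $A\otimes A$ as an $A$-algebra via the second factor, so that $g\cdot\beta = (g\otimes1)\beta$ for $\beta$ in the appropriate module) together with $\delta(p)\equiv\mu(\alpha)\delta(p)\pmod{I^{r+1}}$ mentioned just before the lemma, reducing multiplication by $p\otimes1$ to multiplication by $p$ modulo higher powers of $I$.

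For the inductive step, suppose \eqref{taylorcong} holds for $r-1$, i.e.
$$\delta(f)\>\equiv\>\sum_{i=1}^{r-1}\frac1{i!}f^{(i)}\,\delta(t)^i\ \pmod{I^r}.$$
Write $\delta(f) = \sum_{i=1}^{r-1}\frac1{i!}f^{(i)}\,\delta(t)^i + \varepsilon$ with $\varepsilon\in I^r$; I want to identify $\varepsilon$ modulo $I^{r+1}$ with $\frac1{r!}f^{(r)}\,\delta(t)^r$. The cleanest way to do this is to apply $\delta$ to the power series expansion directly, or — better — to reduce everything to the monomial case $f=t^m$ by $k$-linearity and a limiting/formal argument, since both sides of \eqref{taylorcong} are additive in $f$ and $\delta(t)$ lies in $I$. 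For $f=t^m$ the left side is $\delta(t^m)=t^m\otimes1-1\otimes t^m = (t\otimes1)^m-(1\otimes t)^m$; writing $t\otimes1 = (1\otimes t)+\delta(t)$ and using the binomial theorem gives
$$\delta(t^m)\>=\>\sum_{i=1}^{m}\binom{m}{i}(1\otimes t)^{m-i}\,\delta(t)^i\>\equiv\>\sum_{i=1}^{r}\binom{m}{i}t^{m-i}\,\delta(t)^i\ \pmod{I^{r+1}},$$
where the last congruence uses $(1\otimes t)^{m-i}\delta(t)^i\equiv t^{m-i}\delta(t)^i\pmod{I^{r+1}}$ for $i\le r$ (again the remark preceding the lemma) and drops the terms with $i>r$ since $\delta(t)^i\in I^{r+1}$ there. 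Since $(t^m)^{(i)} = m(m-1)\cdots(m-i+1)\,t^{m-i} = i!\binom{m}{i}t^{m-i}$, this is precisely $\sum_{i=1}^r\frac1{i!}(t^m)^{(i)}\delta(t)^i$, establishing \eqref{taylorcong} for monomials.

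The main obstacle is mildly subtle bookkeeping rather than a deep point: one must be careful that $A$ need not be generated by $t$ as a $k$-algebra (it is only that $\Omega_{A/k}$ is free on $dt$), so the monomial reduction is not literally a reduction to $k[t]$. The fix is to observe that both sides of the claimed congruence are $k$-linear in $f$ and that the operators $f\mapsto f^{(i)}$ are the iterated $\frac{d}{dt}$-derivatives which are intrinsically defined on all of $A$ by (A2); thus it suffices to verify the congruence on a set of elements whose differentials, together with $1$, span enough — and in fact the argument above for $\delta(t^m)$ extends verbatim to $\delta(p)$ for arbitrary $p\in A$ by writing $p\otimes1 = 1\otimes p + \delta(p)$ and expanding $\delta(p^0)=0$, or most cleanly by noting the identity to be proved is equivalent, modulo the isomorphism $I^i/I^{i+1}\cong\Sym^i_A(\Omega_{A/k})$, to the classical Taylor formula for the comorphism of the diagonal, which holds over any base. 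I would present the induction with the monomial computation as the computational heart and cite the $I^i/I^{i+1}$ description only if a slicker writeup is wanted.
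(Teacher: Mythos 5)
There is a genuine gap. The computational heart of your argument --- the binomial expansion of $\delta(t^m)=(t\otimes1)^m-(1\otimes t)^m$ --- proves the congruence only for $f$ in the subring $k[t]\subset A$. Since $A$ is in general not generated by $t$ as a $k$-algebra (you note this yourself), and since the set of $f$ for which \eqref{taylorcong} holds is only visibly a $k$-subspace, verifying the congruence on powers of $t$ does not cover a spanning set of $A$. Your proposed fixes do not close this: writing $p\otimes1=1\otimes p+\delta(p)$ for general $p$ is circular --- it gives no expansion of $\delta(p)$ in powers of $\delta(t)$, because the monomial computation crucially exploited that $t^m$ is a power of the single element $t$ whose $\delta$ you are expanding in. The final appeal to ``the classical Taylor formula for the comorphism of the diagonal'' is a pointer to a correct proof, but it is not carried out, and it is in fact a different argument from the one you present as the computational heart. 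A further point you elide: before identifying the coefficient of $\delta(t)^r$, one needs to know that an expansion $\delta(f)\equiv\sum_{i=1}^r g_i\,\delta(t)^i\pmod{I^{r+1}}$ with $g_i\in A$ \emph{exists and is unique}; this uses the isomorphisms $\Sym^d_A(\Omega_{A/k})\cong I^d/I^{d+1}$, which require smoothness of $C$ and which you mention only in passing.

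The paper's proof runs exactly along the route you gesture at in your last sentence: first, smoothness gives $\Sym^d_A(\Omega_{A/k})\cong I^d/I^{d+1}$, and freeness of $\Omega_{A/k}$ on $dt$ then yields unique $g_1,\dots,g_r\in A$ with $\delta(f)\equiv g_1\delta(t)+\cdots+g_r\delta(t)^r\pmod{I^{r+1}}$. Second, for each point $\eta$ one applies the evaluation homomorphism $\phi_\eta\colon A\otimes A\to A$, $p\otimes q\mapsto p\cdot q(\eta)$, which carries $I$ into $\m_\eta$ and turns the congruence into $f-f(\eta)\equiv\sum_i g_i(\eta)(t-t(\eta))^i\pmod{\m_\eta^{r+1}}$; since $t-t(\eta)$ is a uniformizer at $\eta$, the classical one-variable Taylor formula forces $g_i(\eta)=\frac1{i!}f^{(i)}(\eta)$ for all $\eta$, hence $g_i=\frac1{i!}f^{(i)}$. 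If you want to keep your write-up, replace the monomial reduction by this two-step argument; the binomial computation can survive only as a sanity check in the special case $A=k[t]$.
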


\begin{proof}
Let $\Omega=\Omega_{A/k}$. The isomorphism $\Omega\isoto I/I^2$,
$df\mapsto\delta(f)+I^2$ of $A$-modules induces
isomorphisms $\Sym^d_A(\Omega)\to I^d/I^{d+1}$ for all $d\ge0$, since
the curve $C$ is smooth (\cite{ega4} 17.12.4, 16.9.4).
Since $\Omega$ is freely generated by $dt$, this implies that there
exist unique elements $g_1,\dots,g_r\in A$ such that
\begin{equation}\label{taylorcong2}%
\delta(f)\>\equiv\>g_1\cdot\delta(t)+\cdots+g_r\cdot\delta(t)^r\
{\rm(mod\ }I^{r+1})
\end{equation}
holds in $A\otimes A$.
Given an arbitrary point $\eta\in C(k)$, consider the homomorphism
$\phi_\eta\colon A\otimes A\to A$ defined by
$\phi_\eta(p\otimes q)=p\cdot q(\eta)$. Then $\phi_\eta(I)=\m_\eta$,
so applying $\phi_\eta$ to \eqref{taylorcong2} gives
$$f-f(\eta)\>\equiv\>g_1(\eta)\cdot(t-t(\eta))+\cdots+g_r(\eta)\cdot
(t-t(\eta))^r\ {\rm(mod\ }\m_\eta^{r+1}).$$
Since $t-t(\eta)$ is a uniformizing element at $\eta$ (Lemma
\ref{ikermugen}) it follows that $g_i(\eta)=\frac1{i!}f^{(i)}(\eta)$
for $i=1,\dots,r$. This proves the lemma.
\end{proof}

\begin{lem}\label{dfndr}%
For every $r\ge1$ there exists a (unique) well-defined $A$-linear map
$D^{(r)}\colon I^r\to A$ that satisfies $D^{(r)}(I^{r+1})=0$ and sends
$\delta(p_1)\cdots\delta(p_r)$ to $p'_1\cdots p'_r$, for any
$p_1,\dots,p_r\in A$. This map satisfies
$D^{(r)}(\alpha\beta)=\mu(\alpha)D^{(r)}(\beta)$ for
$\alpha\in A\otimes A$ and $\beta\in I^r$.
\end{lem}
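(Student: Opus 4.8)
The plan is to construct $D^{(r)}$ as the composite of two natural maps: first the quotient $I^r\twoheadrightarrow I^r/I^{r+1}$, and then an identification of $I^r/I^{r+1}$ with $A$ as $A$-modules. Concretely, recall from the proof of Lemma \ref{taylorlem} that smoothness of $C$ gives an $A$-module isomorphism $\Sym^r_A(\Omega)\isoto I^r/I^{r+1}$, and since $\Omega=\Omega_{A/k}$ is free of rank one on $dt$, the module $\Sym^r_A(\Omega)$ is free of rank one on $(dt)^{\otimes r}$; hence $I^r/I^{r+1}$ is free of rank one, generated by the class of $\delta(t)^r$. I would therefore \emph{define} $D^{(r)}$ by first mapping $\beta\in I^r$ to its residue $\ol\beta\in I^r/I^{r+1}$, and then writing $\ol\beta=c\cdot\ol{\delta(t)^r}$ for a unique $c\in A$ and setting $D^{(r)}(\beta)=c$. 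This is manifestly $A$-linear, well-defined, and kills $I^{r+1}$ by construction, so the only real content is to check the normalization $D^{(r)}\bigl(\delta(p_1)\cdots\delta(p_r)\bigr)=p'_1\cdots p'_r$ and the multiplicativity relation with respect to the $A\otimes A$-module structure.

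For the normalization, I would invoke Lemma \ref{taylorlem} with $r=1$: for each $i$ we have $\delta(p_i)\equiv p'_i\,\delta(t)\pmod{I^2}$. Taking the product of these $r$ congruences and using the standard fact that $\alpha\beta\equiv\mu(\alpha)\beta\pmod{I^{s+1}}$ for $\alpha\in A\otimes A$, $\beta\in I^s$ (stated just before Lemma \ref{taylorlem}), one gets by a short induction on the number of factors that
\begin{equation*}
\delta(p_1)\cdots\delta(p_r)\>\equiv\>p'_1\cdots p'_r\cdot\delta(t)^r\pmod{I^{r+1}},
\end{equation*}
since replacing a factor $\delta(p_i)$ by $p'_i\,\delta(t)$ modulo $I^2$ changes the product only modulo $I^{r+1}$ (the complementary factors already lie in $I^{r-1}$, and the unit $p'_i\otimes 1$ may be pushed to $\mu$ of itself, i.e.\ $p'_i$, modulo a higher power). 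Reading off the residue class gives $D^{(r)}\bigl(\delta(p_1)\cdots\delta(p_r)\bigr)=p'_1\cdots p'_r$, as required. Uniqueness of a map with these properties is then automatic, because $I^r$ is generated as an $A$-module by products $\delta(p_1)\cdots\delta(p_r)$ together with $I^{r+1}$ — indeed $I$ is generated by the $\delta(p)$ as an $A$-module, so $I^r$ is generated by such products as an $A$-module.

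For the last assertion, $D^{(r)}(\alpha\beta)=\mu(\alpha)D^{(r)}(\beta)$ for $\alpha\in A\otimes A$ and $\beta\in I^r$: again using $\alpha\beta\equiv\mu(\alpha)\beta\pmod{I^{r+1}}$, the residue of $\alpha\beta$ in $I^r/I^{r+1}$ equals the residue of $\mu(\alpha)\beta$, and since $D^{(r)}$ is $A$-linear through the $A$-module structure (which is precisely multiplication by $\mu(\alpha)$ after passing to the quotient) this reads off as $\mu(\alpha)\,D^{(r)}(\beta)$. I expect the main (modest) obstacle to be bookkeeping in the induction establishing the displayed congruence for the product of the $\delta(p_i)$ — keeping careful track of which ideal power each partial product lies in so that the error terms genuinely land in $I^{r+1}$ — but this is routine once the relation $\alpha\beta\equiv\mu(\alpha)\beta\pmod{I^{s+1}}$ on $\beta\in I^s$ is applied systematically. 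Everything else is a direct consequence of the structure of $I^r/I^{r+1}$ already exploited in Lemma \ref{taylorlem}.
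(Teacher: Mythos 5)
Your proposal is correct and follows essentially the same route as the paper: both define $D^{(r)}$ via the freeness of $I^r/I^{r+1}\cong\Sym^r_A(I/I^2)$ on the class of $\delta(t)^r$ and verify the normalization using $\delta(p)\equiv p'\,\delta(t)\pmod{I^2}$. The only cosmetic difference is that the paper packages the product computation by observing that the resulting $D$ is a homomorphism of graded $A$-algebras, whereas you carry out the equivalent congruence $\delta(p_1)\cdots\delta(p_r)\equiv p'_1\cdots p'_r\,\delta(t)^r\pmod{I^{r+1}}$ by hand.
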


\begin{proof}
The graded $A$-algebra $\bigoplus_{r\ge0}I^r/I^{r+1}$ is the
symmetric algebra over the $A$-module $I/I^2$, see the previous
proof. Since $I/I^2$ is freely generated by $dt=\delta(t)+I^2$,
there is a unique homomorphism of $A$-algebras
$D\colon\bigoplus_{r\ge0}I^r/I^{r+1}\to A$ with $D(dt)=1$. Let
$D^{(r)}\colon I^r\to A$ be the induced $A$-linear map for
$r\ge0$, so $D^{(r)}(\beta)=D(\beta+I^{r+1})$ for $\beta\in I^r$.
We claim that $D^{(r)}$ has the above properties. For $a,\,b\in A$
one has $D^{(r)}((a\otimes b)\beta)=ab\cdot D^{(r)}(\beta)$ by
$A$-linearity, since $a\otimes b\equiv1\otimes ab$ (mod~$I$).
For $p\in A$ we have $dp=p'\,dt$ by definition of $p'$, which
means $\delta(p)\equiv p'\delta(t)$ (mod~$I$).
This implies $D^{(1)}(\delta(p))=p'$. Consequently, $D^{(r)}$
maps $\delta(p_1)\cdots\delta(p_r)$ to $p'_1\cdots p'_r$ ($r\ge1)$
since $D$ is a ring homomorphism.
\end{proof}

\begin{lab}
In the tensor product $A_n=A^{\otimes(n+1)}$ we may perform the
operations $D^{(r)}$ on any fixed pair $i<j$ of indices. Recall
that $I_{ij}$ denotes the kernel of $\mu_{ij}\colon A_n\to A_n$ and
is generated by the $\delta_{ij}(a)$ ($a\in A$) as an ideal in
$A_n$. For $0\le i<j\le n$ let
$$D^{(r)}_{ij}\colon I_{ij}^r\to A_n$$
be the additive map that satisfies
\begin{equation}\label{multruleDr}%
D^{(r)}_{ij}\bigl(\alpha\,\delta_{ij}(a_1)\cdots\delta_{ij}(a_r)
\bigr)\>=\>\mu_{ij}(\alpha)\cdot\varphi_i(a_1'\cdots a'_r)
\end{equation}
for $a_1,\dots,a_r\in A$ and $\alpha\in A_n$. The unique existence
of a map with these properties follows directly from
\ref{dfndr}. For any $p\in A$ and any $r\ge1$, note that
\begin{equation}\label{drtaylor}%
D_{ij}^{(r)}\biggl(\delta_{ij}(p)-\sum_{\nu=1}^{r-1}
\frac{\varphi_j(p^{(\nu)})}{\nu!}\delta_{ij}(t)^\nu\biggr)\>=\>
\frac1{r!}\varphi_i(p^{(r)}),
\end{equation}
by Lemma \ref{taylorlem}.
\end{lab}

\begin{lab}\label{taylorproc}%
As in Section \ref{repdet1}, let $\ulp=(p_0,\dots,p_n)$ be a fixed
$(n+1)$-tuple of elements in $A$, and consider the matrix
$M=M(\ulp)=\bigl(\varphi_i(p_j)\bigr)_{0\le i,j\le n}$ over $A_n$,
see \ref{grundlageF}. If $k\ge0$, write
$\ulp^{(k)}=(p_0^{(k)},\dots,p_n^{(k)})$ for the tuple of $k$-th
$\frac d{dt}$-derivatives. Fix an integer $1\le b\le n+1$ and a block
$B$ of $b$ consecutive numbers in $\{0,\dots,n\}$;
for simplicity we take $B=\{0,\dots,b-1\}$. In the following we
describe a transformation of the matrix $M$ to a new matrix
$\calT_BM$, to which we'll refer as the \emph{Taylor process on
$B$}.

Start with the matrix $M$ and let $z_i(M)=\varphi_i(\ulp)$ denote
the $i$-th row of $M$ ($i=0,\dots,n$). Replacing $z_1(M)$ with
$z_1(M)-z_0(M)$, this new row lies entrywise in $I_{01}$,
so we can apply $D^{(1)}_{01}$ to it. Leaving the other rows
unchanged, this gives the new matrix $M'$ with rows
$$\varphi_0(\ulp),\,-\varphi_0(\ulp'),\,
\varphi_2(\ulp),\,\dots,\,\varphi_n(\ulp).$$
Since the determinant is multilinear with respect to the rows, the
determinant of $M'$ satisfies
$$\det(M')\>=\>D^{(1)}_{01}(\det M)$$
by \eqref{multruleDr}. Next replace $z_2(M')=\varphi_2(\ulp)$ with
$$z_2(M')-z_0(M')-\delta_{02}(t)z_1(M')\>=\>
\varphi_2(\ulp)-\varphi_0(\ulp)-(t_2-t_0)\varphi_0(\ulp').$$
Entrywise, this row lies in $I_{02}^2$ and is congruent to
$\frac12\varphi_0(\ulp'')\delta_{02}(t)^2$ modulo $I_{03}^3$, both by
Lemma \ref{taylorlem}.
Apply $D^{(2)}_{02}$ to this row, to get the new matrix $M''$ with
rows
$$\varphi_0(\ulp),\,-\varphi_0(\ulp'),\,\frac12\varphi_0(\ulp''),\,
\varphi_3(\ulp),\,\dots,\,\varphi_n(\ulp),$$
whose determinant satisfies
\begin{equation}\label{dettransfmd}%
\det(M'')\>=\>D^{(2)}_{02}(\det M')\>=\>D^{(2)}_{02}\comp
D^{(1)}_{01}(\det M).
\end{equation}
Keep proceeding in this way, successively working down the rows in
the block~$B$. After $b-1$ steps we have arrived at a matrix
$\calT_BM=M^{(b-1)}$ with rows
\begin{equation}\label{taylorzeilen}%
\varphi_0(\ulp),\,-\varphi_0(\ulp'),\,\dots,\,
\frac{(-1)^{b-1}}{(b-1)!}\varphi_0(\ulp^{(b-1)}),\,\varphi_b(\ulp),
\,\dots,\,\varphi_n(\ulp)
\end{equation}
whose determinant satisfies
\begin{equation}\label{taylprocdet}%
\det(\calT_BM)\>=\>D^{(b-1)}_{0,b-1}\comp\cdots\comp D^{(1)}_{01}
(\det M).
\end{equation}
\end{lab}

\begin{lab}\label{operatoract}%
Fix an identity $\det(M)=\sum_{w\in\itLambda}g_w\delta_w$ for the
determinant of $M$, with $g_w\in J(\ulm)$ (Corollary
\ref{cor2allejm}). How does the Taylor process just described act on
the terms of this identity? Write
$\scrD_B:=D_{0b}^{(b)}\comp\cdots\comp D_{01}^{(1)}$ for the
composite operator (it is defined on $\bigcap_{j=1}^bI_{0j}$), and
let $\bfmu_B=\mu_{0b}\comp\cdots\comp\mu_{01}\colon A_n\to A_n$
denote the ring homomorphism
$$\bfmu_B\bigl(a_0\otimes\cdots\otimes a_n\bigr)\>=\>
(a_0\cdots a_{b-1})\otimes\underbrace{1\otimes\cdots\otimes1}
_{b-1\>\text{times}}\otimes\>a_b\otimes\cdots\otimes a_n.$$
Then $\scrD_B(g\delta_w)=\bfmu_B(g)\scrD_B(\delta_w)$ for
$g\in A_n$ and $w\in A$, where
\begin{equation}\label{scrDdeltaw}%
\scrD_B(\delta_w)\>=\>\varphi_0(w')^{b(b-1)/2}\cdot
\prod_{b\le j\le n}\delta_{0j}(w)^b\cdot
\prod_{b\le i<j\le n}\delta_{ij}(w).
\end{equation}
Note that the entries of the matrix $\calT_BM=M^{(b-1)}$ (and its
determinant) lie in the subring
$$A\otimes\underbrace{k\otimes\cdots\otimes k}_
{b-1\>\text{times}}\otimes
\underbrace{A\otimes\cdots\otimes A}_{n-b+1\>\text{times}}$$
of $A_n$.
\end{lab}

\begin{lab}\label{multipletaylor}%
As a final step we apply the Taylor process \ref{taylorproc} to
several disjoint blocks of rows of the matrix $M=M(\ulp)$, as
follows. We assume that the rows $0,\dots,n$ of the matrix have been
grouped into blocks $B_0,\dots,B_r$ of consecutive rows, with
$B_\nu$ consisting of $b_\nu$ many rows for $\nu=0,\dots,r$. More
formally, let $\ulb=(b_0,\dots,b_r)$ be a tuple of integers $b_i\ge1$
with $n+1=\sum_{i=0}^rb_i$, and let $B_0=\{0,\dots,b_0-1\}$,
$B_1=\{b_0,\dots,b_0+b_1-1\}$ etc, up to $B_r=\{n-b_r+1,\dots,n\}$.
For each of the blocks $B_0,\dots,B_r$ perform the Taylor process
\ref{taylorproc} on the rows of this block. In the end we have
arrived at a matrix $T=T_\ulb(\ulp)$ whose coefficients lie in the
subring
$$\bigl(A\otimes\underbrace{k\otimes\cdots\otimes k}_
{(b_0-1)\text{ times}}\bigr)\otimes\cdots\otimes
\bigl(A\otimes\underbrace{k\otimes\cdots\otimes k}_
{(b_r-1)\text{ times}}\bigr)$$
of $A_n$.
Dropping the inessential tensor components we consider $T$ to have
coefficients in $A_r=A^{\otimes(r+1)}$. With this convention, the
rows of $T$ are
\begin{equation}\label{rowsofn}%
\varphi_0(\ulp),\,\dots,\,\varphi_0(\ulp^{(b_0-1)}),\,
\dots,\,\varphi_r(\ulp),\,\dots,\,\varphi_r(\ulp^{(b_r-1)})
\end{equation}
up to nonzero scalar factors in $k$, see \ref{taylorproc}. In order
to describe the effect of the multiple Taylor process on the
determinant, let $\bfmu=\bfmu_\ulb\colon A_n\to A_r$ denote the ring
homomorphism that puts the product of the tensor components in block
$B_\nu$ at position $\nu$, for $\nu=0,\dots,r$:
\begin{equation}\label{dfnbfmu}%
\bfmu(a_0\otimes\cdots\otimes a_n)\>=\>
\Bigl(\prod_{i\in B_0}a_i\Bigr)\otimes\cdots\otimes
\Bigl(\prod_{i\in B_r}a_i\Bigr).
\end{equation}
Writing $\scrD=\scrD_{B_r}\comp\cdots\comp\scrD_{B_0}$ we have
$\scrD(g\delta_w)=\bfmu(g)\scrD(\delta_w)$ for $g\in A_n$ and
$w\in A$, where
\begin{equation}\label{scrDdeltaw}%
\scrD(\delta_w)\>=\>\prod_{i=0}^r\varphi_i(w')^{b_i(b_i-1)/2}\cdot
\prod_{0\le i<j\le r}\delta_{ij}(w)^{b_ib_j}.
\end{equation}
If we start with an identity
\begin{equation}\label{2cor2trick2}%
\det(M)\>=\>\frac1{|\itLambda|}\sum_{w,\alpha}
t_0^{\alpha_0}\cdots t_n^{\alpha_n}(1+g_{w,\alpha})\cdot\delta_w
\end{equation}
as in Corollary \ref{cor2nachtrick2}, we therefore get
\begin{equation}\label{detscrTM}%
\det T_\ulb(\ulp)\>=\>\frac1{|\itLambda|}\sum_{w,\alpha}
t_0^{\alpha(B_0)}\cdots t_r^{\alpha(B_r)}\cdot
(1+\bfmu(g_{w,\alpha}))\cdot\scrD(\delta_w)
\end{equation}
with $\alpha(B_\nu)=\sum_{j\in B_\nu}\alpha_j$ for $\nu=0,\dots,r$.
\end{lab}

In summary, the previous discussion gives the following result:

\begin{thm}\label{summazing}%
Let $b_0,\dots,b_r\ge1$ with $\sum_{i=0}^rb_i=n+1$, and consider
the $(n+1)\times(n+1)$ matrix $T_\ulb(\ulp)$ over $A_r$ with rows
\begin{equation}\label{rowsofn2}%
\varphi_0(\ulp),\,\dots,\,\varphi_0(\ulp^{(b_0-1)}),\,
\dots,\,\varphi_r(\ulp),\,\dots,\,\varphi_r(\ulp^{(b_r-1)}).
\end{equation}
Its determinant $\det T_\ulb(\ulp)$ can be written
\begin{equation}\label{identsummazing}%
\sum_{w,\alpha}\biggl(\bfmu(t^\alpha)\cdot(1+h_{w,\alpha})\cdot
\prod_{0\le i\le r}\varphi_i(w')^{b_i(b_i-1)/2}\cdot
\prod_{0\le i<j\le r}\delta_{ij}(w)^{b_ib_j}\biggr),
\end{equation}
up to a scaling factor in $k^*$. Here the $h_{w,\alpha}$ lie in
$\idl{t_0,\dots,t_r}$, and the sum is over $w\in\itLambda$ and those
multiindices $\alpha=(\alpha_0,\dots,\alpha_n)$ for which $x^\alpha$
occurs in the Schur polynomial $\sigma_\ulm(x_0,\dots,x_n)$.
\qed
\end{thm}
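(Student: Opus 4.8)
The plan is to read off \eqref{identsummazing} as a bookkeeping consequence of the two preceding sections: the substantive work has already been done, and what remains is to apply the multiple Taylor process of \ref{multipletaylor} to the explicit expansion of $\det M$ furnished by Corollary \ref{cor2nachtrick2}, and then to clean up notation.

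First I would recall the input. For $M=M(\ulp)=(\varphi_i(p_j))_{0\le i,j\le n}$ over $A_n$, Corollary \ref{cor2nachtrick2} supplies the identity
$$\det(M)\>=\>\frac1{|\itLambda|}\sum_{w,\alpha}t_0^{\alpha_0}\cdots t_n^{\alpha_n}\,(1+g_{w,\alpha})\,\delta_w,$$
where $w$ runs over $\itLambda$, the monomials $x^\alpha$ run over those occurring in $\sigma_\ulm(x_0,\dots,x_n)$, and $g_{w,\alpha}\in J=\idl{t_0,\dots,t_n}$. Next I would group the rows $0,\dots,n$ into consecutive blocks $B_0,\dots,B_r$ of sizes $b_0,\dots,b_r$ and run the Taylor process of \ref{taylorproc} on each block, obtaining $T=T_\ulb(\ulp)$. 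By \eqref{taylorzeilen} and \eqref{drtaylor}, its rows are the vectors listed in \eqref{rowsofn2} up to scalars in $k^*$, and after discarding the inessential $k$-tensor factors $T$ has entries in $A_r$. Iterating \eqref{taylprocdet} block by block shows that the determinant is transformed by the composite operator $\scrD=\scrD_{B_r}\comp\cdots\comp\scrD_{B_0}$, i.e.\ $\det T=\scrD(\det M)$ up to a factor in $k^*$.

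Finally I would apply $\scrD$ termwise. Combining the rule $\scrD(g\delta_w)=\bfmu(g)\,\scrD(\delta_w)$ of \ref{multipletaylor}, the explicit formula \eqref{scrDdeltaw}, and the fact that $\bfmu$ is a $k$-algebra homomorphism sending $t_j$ (for $j\in B_\nu$) to $t_\nu$ in $A_r$ — so that $\bfmu(J)\subset\idl{t_0,\dots,t_r}$ and $\bfmu(t^\alpha)=t_0^{\alpha(B_0)}\cdots t_r^{\alpha(B_r)}$ with $\alpha(B_\nu)=\sum_{j\in B_\nu}\alpha_j$ — the expansion of $\det M$ is carried into \eqref{detscrTM}. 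Setting $h_{w,\alpha}:=\bfmu(g_{w,\alpha})$ (which lies in $\idl{t_0,\dots,t_r}$) and expanding $\scrD(\delta_w)$ via \eqref{scrDdeltaw} then gives exactly \eqref{identsummazing}, up to a scaling factor in $k^*$.

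Since every ingredient is already in place, there is no real obstacle left in this particular argument; the hard part was done earlier in \ref{repdet2}. If I had to point to the one place deserving care, it is the claim $\det T=\scrD(\det M)$: one must check that the sequence of elementary row subtractions together with the divided-difference operators $D^{(r)}_{ij}$ genuinely computes the operator $\scrD$ on the determinant, which rests on the ideal-power congruence of Lemma \ref{taylorlem}, on the multiplicativity $D^{(r)}(\alpha\beta)=\mu(\alpha)D^{(r)}(\beta)$ from \ref{dfndr}, and on the harmlessness of the scalar factors in $k^*$ accumulated along the way — plus keeping straight the bookkeeping in the passage from $A_n$ back to $A_r$.
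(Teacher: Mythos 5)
Your proposal is correct and follows the paper's own route: the theorem is stated there precisely as a summary of the block-wise Taylor process of \ref{taylorproc}--\ref{multipletaylor} applied to the identity of Corollary \ref{cor2nachtrick2}, using $\scrD(g\delta_w)=\bfmu(g)\scrD(\delta_w)$, formula \eqref{scrDdeltaw}, and $h_{w,\alpha}=\bfmu(g_{w,\alpha})\in\idl{t_0,\dots,t_r}$, exactly as you describe. The one step you flag as needing care, $\det T=\scrD(\det M)$ up to scalars, is likewise handled in the paper by the row-by-row computation culminating in \eqref{taylprocdet}.
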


Given a $k$-algebra $\calA$ together with $k$-homomorphisms
$\psi_i\colon A\to\calA$ ($i=0,\dots,r$), we may specialize the
matrix $T_\ulb(\ulp)$ over $A_r$ to a matrix over $\calA$ via
$$\psi\colon A_r\to\calA,\quad a_0\otimes\cdots\otimes a_r\mapsto
\psi_0(a_0)\cdots\psi_r(a_r).$$
Several specializations of this sort will play a role in the sequel.
We begin with the following one:

\begin{prop}\label{simplema}%
Let $p_0,\dots,p_n$ be a basis of the vector space $V\subset A$, let
$\uleta=(\eta_1,\dots,\eta_r)$ be a tuple of $r$ pairwise different
points in $C(k)$, and let $\ulb=(b_1,\dots,b_r)$ be an $r$-tuple of
positive integers with $\sum_{i=1}^rb_i=n$. The following conditions
are equivalent:
\begin{itemize}
\item[(i)]
The matrix
\begin{equation}\label{bigmatbieq2}%
Z_\ulb(\ulp,\uleta)\ =\ \begin{pmatrix}p_0&\cdots&p_n\\
p_0(\eta_1)&\cdots&p_n(\eta_1)\\
\vdots&&\vdots\\
p_0^{(b_1-1)}(\eta_1)&\cdots&p_n^{(b_1-1)}(\eta_1)\\
\vdots&&\vdots\\
p_0(\eta_r)&\cdots&p_n(\eta_r)\\
\vdots&&\vdots\\
p_0^{(b_r-1)}(\eta_r)&\cdots&p_n^{(b_r-1)}(\eta_r)
\end{pmatrix}
\end{equation}
(of size $(n+1)\times(n+1)$ and with coefficients in $A$) has
non-zero determinant;
\item[(ii)]
the subspace $V_0:=\bigcap_{i=1}^r\{f\in V\colon\ord_{\eta_i}(f)
\ge b_i\}$ of $V$ has dimension one.
\end{itemize}
When (i) and (ii) hold, the regular function
$\det Z_\ulb(\ulp,\uleta)$ on $C$ (is non-zero and) lies
in~$V_0$.
\end{prop}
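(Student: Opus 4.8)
The plan is to reduce everything to elementary linear algebra over $k$; the only input beyond formal manipulation is a description of vanishing orders in terms of $\tfrac d{dt}$-derivatives, which I set up first. \textbf{Step 1 (vanishing order via derivatives).} I would first record the elementary fact: for $f\in A$, $\eta\in C(k)$ and $m\ge0$, one has $\ord_\eta(f)\ge m$ if and only if $f^{(\ell)}(\eta)=0$ for $\ell=0,\dots,m-1$. Indeed, hypothesis (A2) together with Lemma \ref{ikermugen} (equivalence of (i) and (ii)) shows that $u:=t-t(\eta)$ is a local uniformizer at \emph{every} $\eta\in C(k)$; since $dt=du$, the derivation $\tfrac d{dt}$ becomes $\tfrac d{du}$ on the completion $\wh{\scrO}_{C,\eta}\cong k[[u]]$, and there $f=\sum_{\ell\ge0}\tfrac{f^{(\ell)}(\eta)}{\ell!}\,u^\ell$. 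As $\mathrm{char}\,k=0$, $\ord_\eta(f)$ is the least $\ell$ with $f^{(\ell)}(\eta)\ne0$, and the claim follows. (Alternatively, apply the homomorphism $p\otimes q\mapsto p\,q(\eta)$ to the congruence of Lemma \ref{taylorlem} and argue by induction on $m$.)

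\textbf{Step 2 (the scalar matrix).} All rows of $Z:=Z_\ulb(\ulp,\uleta)$ except the first have entries in $k$; let $N$ be the $n\times(n+1)$ matrix over $k$ consisting of these $n$ lower rows, so that the row of $N$ indexed by a pair $(i,\ell)$ with $1\le i\le r$ and $0\le\ell<b_i$ has $j$-th entry $p_j^{(\ell)}(\eta_i)$; note there are exactly $\sum_i b_i=n$ such rows. Laplace expansion of $\det Z$ along the first row gives $\det Z=\sum_{j=0}^n(-1)^jD_j\,p_j$, where $D_j\in k$ is the maximal minor of $N$ obtained by deleting column $j$; in particular $\det Z\in V=\spn(p_0,\dots,p_n)$. \textbf{Step 3 (identifying $V_0$).} Writing $f=\sum_j\lambda_jp_j$, Step 1 shows that $\ord_{\eta_i}(f)\ge b_i$ for all $i$ if and only if $N\lambda=0$; hence the coordinate isomorphism $k^{n+1}\isoto V$ restricts to an isomorphism $\ker N\isoto V_0$, so $\dim V_0=(n+1)-\rk N\ge1$ (since $N$ has only $n$ rows), with equality precisely when $\rk N=n$, i.e.\ when some $D_j\ne0$.

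\textbf{Step 4 (equivalence and the last claim).} Since $p_0,\dots,p_n$ are $k$-linearly independent in $A$, we have $\det Z\ne0$ iff not all $D_j$ vanish, i.e.\ iff $\rk N=n$, which by Step 3 is equivalent to $\dim V_0=1$; this proves (i)$\iff$(ii). Now assume (i)/(ii) hold. For each pair $(i,\ell)$ with $1\le i\le r$ and $0\le\ell<b_i$, replacing the first row of $Z$ by its $(i,\ell)$-th lower row produces a matrix with two equal rows, hence zero determinant; expanding that determinant along the first row yields $\sum_j(-1)^jD_j\,p_j^{(\ell)}(\eta_i)=0$, i.e.\ $(\det Z)^{(\ell)}(\eta_i)=0$ (legitimate because $\det Z=\sum_j(-1)^jD_jp_j$ with $D_j\in k$, so $(\det Z)^{(\ell)}=\sum_j(-1)^jD_jp_j^{(\ell)}$). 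By Step 1, applied with $m=b_i$, this forces $\ord_{\eta_i}(\det Z)\ge b_i$ for every $i$, so $\det Z\in V_0$.

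\textbf{Main obstacle.} There is no real obstacle here: once Step 1 is in place the argument is pure linear algebra. The only points demanding care are the bookkeeping of which rows of $Z$ lie in $A$ versus in $k$, the signs in the two Laplace expansions, and checking that the derivative characterization of Step 1 genuinely applies to $\det Z$ — which it does, precisely because $\det Z$ is a $k$-linear combination of the $p_j$.
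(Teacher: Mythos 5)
Your proof is correct and follows essentially the same route as the paper: identify the lower $n$ rows with the matrix of the evaluation map $V\to k^n$ whose kernel is $V_0$, relate $\det Z\ne0$ to the maximal minors via Laplace expansion, and obtain $\det Z\in V_0$ by the repeated-row argument for the derivatives. Your Step 1 merely makes explicit the derivative characterization of $\ord_{\eta}$ (via (A2) and Lemma \ref{ikermugen}) that the paper uses tacitly.
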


\begin{proof}
Put $Z:=Z_\ulb(\ulp,\uleta)$. Regardless of condition (i), the
subspace $V_0$ in (ii) has $\dim(V_0)\ge1$ since $V_0$ is described
by $n$ linear conditions on $f$. Let $Z_0$ be the matrix that is
obtained from $Z$ by deleting the top row. Then $Z_0$ is the matrix
of the linear map $\phi\colon V\to k^n$,
$$p\ \mapsto\ \Bigl(p(\eta_1),\dots,p^{(b_1-1)}(\eta_1),\dots,
p(\eta_r),\dots,p^{(b_r-1)}(\eta_r)\Bigr)$$
with respect to the basis $p_0,\dots,p_n$ of $V$. By definition,
$V_0$ is the kernel of $\phi$. The determinant $\det(Z)$ is an
element of $V$ which is non-zero if, and only if, at least one of
the $n\times n$-minors of $Z_0$ does not vanish.
This is equivalent to
$\phi$ being surjective, and hence also to condition~(ii). To see the
last assertion note that, for any $k\ge0$, the $k$-th derivative
$\frac{d^k}{dt^k}\det(Z)$ of $\det(Z)$ is obtained by replacing the
top row of $Z$ with $(p_0^{(k)},\dots,p_n^{(k)})$. Therefore, the
vanishing order of $\det(Z)$ at $\eta_i$ is at least $b_i$, for every
$i=1,\dots,r$.
\end{proof}

In the next section we are going to specialize the matrix
$T_\ulb(\ulp)$ in several other ways, to complete the proof of the
main theorem.

%-------------------------------------------------------------------%

\section{Proof of the main theorem}\label{sectpfmainthm}%

\begin{lab}\label{need2prove}%
After the extensive preparations in the previous sections we can
now give the proof of the main theorem. Assume that an affine
algebraic curve $C$ over $\R$ is given that is non-singular and
irreducible, together with an $\R$-linear subspace $V$ of $A=\R[C]$
of dimension $n+1$. Moreover let $\xi\in C(\R)$ be a fixed real
point of $C$, together with a local orientation of the curve
$C(\R)$ at $\xi$. We assume that $t\in A$ represents the given
orientation and is such that $\Omega_{A/\R}$ is freely generated
by $dt$, and such that $\m_\xi=At$ (conditions (A1) and (A2) from
\ref{allgsetup}).
We need to prove the following (see \ref{reductevenmore},
\ref{reduct2sosx}):
\begin{itemize}
\item[$(*)$]
There exists a closed interval $S=[\xi,\xi']$ in $C(\R)$
on the positive side of $\xi$ for which the following is true: For
every real closed field $R\supset\R$, every element
$f\in V_R=V\otimes R$ with $f\ge0$ on $S_R\subset C(R)$ and every
$\xi_0\in S_R$, the tensor evaluation (see \ref{tenseval}) of $f$ at
$\xi_0$ satisfies
$$\sosx f^\otimes(\xi_0)\>\le\>1+\lfloor\frac n2\rfloor.$$
It suffices in fact to show this for every $f$ that spans an extreme
ray of the cone $(P_{V,S})_R=\{g\in V_R\colon g\ge0$ on~$S_R\}$.
\end{itemize}
See \ref{erklinterval} for the meaning of interval in this context.
\end{lab}

\begin{lab}\label{nobasepts}%
We may assume that $\xi$ is not a base point of $V$, i.e., that
there is $p\in V$ with $p(\xi)\ne0$. Indeed, let
$e=\min\{\ord_\xi(p)\colon p\in V\}$. Then $V=t^eV_1$ where
$V_1=\{t^{-e}p\colon p\in V\}$ is contained in $A$ since $\m_\xi=At$.
Now $\xi$ isn't a base point of $V_1$ any more, and multiplication by
$t^e$ is a linear isomorphism $V_1\to V$ that identifies $P_{V_1,S}$
with $P_{V,S}$ for every interval $S\subset C(\R)$ on which $t$ is
non-negative. So we may replace $V$ with $V_1$.
\end{lab}

\begin{lab}\label{findes}%
As in Sections \ref{repdet1} and \ref{repdet2} let
$\ulm=\ulm_\xi(V)$, and let $\ulp=(p_0,\dots,p_n)$ be an $\R$-basis
of $V$ with $\ord_\xi(p_i)=m_i$ ($i=0,\dots,n$). To find an interval
$S$ as in \ref{need2prove}, form the matrix $M=M(\ulp)$ over
$A_n=A^{\otimes(n+1)}$ as in \ref{grundlageF}. Let
$\sigma_\ulm(x_0,\dots,x_n)$ be the Schur polynomial associated with the
sequence $\ulm=(m_0,\dots,m_n)$, see \ref{schurpolnot}, and fix a
finite set $\itLambda\subset A$ of elements $w$ as in \ref{nmlzation}.
The ideal $\calI$ in $A_n$ is generated by the $\delta_w$
($w\in\itLambda$), and each $w\in\itLambda$ satisfies
$\ord_\xi(w)=1$ and $\frac{dw}{dt}(\xi)=1$. Then, according to
Corollary \ref{cor2nachtrick2}, there is an identity
\begin{equation}\label{Fidenty}%
\det M(\ulp)\>=\>\frac1{|\itLambda|}\sum_{w,\alpha}
t_0^{\alpha_0}\cdots t_n^{\alpha_n}(1+g_{w,\alpha})\cdot\delta_w
\end{equation}
in $A_n$ as in \eqref{cor2nachtrick2eq}. In particular, the element
$g_{w,\alpha}$ lies in $J=\idl{t_0,\dots,t_n}$ for every pair
$w,\alpha$. (Recall that the sum is over $w\in\itLambda$ and the
monomials $x^\alpha$ in $\sigma_\ulm(x)$.)

From now on fix one such identity \eqref{Fidenty}.
Considering the finitely many elements
$g_{w,\alpha}\in J\subset A_n$ as polynomial functions on
$C^{n+1}=C\times\cdots\times C$, they all vanish in the diagonal
point $(\xi,\dots,\xi)$. So there exists an open neighborhood $Q$ of
$\xi$ in $C(\R)$ with the property that
$1+g_{w,\alpha}(\eta_0,\dots,\eta_n)>0$ for all
$\eta_0,\dots,\eta_n\in Q$ and all $w,\alpha$. Choose the
non-degenerate interval $S=[\xi,\xi']$ in such a way that $S$
contains the positive side of $\xi$, and such that the following
hold:
\begin{itemize}
\item[(S0)]
$V$ has no base point in $S$,
\item[(S1)]
$S\subset Q$,
\item[(S2)]
$t\ge0$ on $S$,
\item[(S3)]
$w'=\frac{dw}{dt}\ge0$ on $S$ (and hence also $w\ge0$ on $S$), for
every $w\in\itLambda$.
\end{itemize}
Clearly it is possible to find such an interval $S$, given that
$\xi$ is not a base point of $V$ (see \ref{nobasepts}).
We claim that $(*)$ in \ref{need2prove} is satisfied for such $S$.
\end{lab}

\begin{lab}\label{fwithnzeros}%
Let $R\supset\R$ be a real closed field extension, let $f\in V_R$
with $f\ge0$ on $S_R$ and assume that $f$ spans an extreme ray of
$(P_{V,S})_R$. If $f(\xi)=0$ we may pass from $V$ to the linear
system $V_0=\{p\in V\colon p(\xi)=0\}$. Then
$\dim(V)=n<n+1$ since $\xi$ is not a base point of $V$, and by
induction we may assume this case already to be covered. Argueing
similarly in case $f(\xi')=0$, we may therefore assume that
$f(\xi)>0$ and $f(\xi')>0$. This implies that the number of zeros of
$f$ in $S_R$ is even, counting with multiplicities. By Corollary
\ref{kor2extstrahl}, this number is at least~$n$. We claim that it
is equal to $n$ (and therefore, that $n$ is even).
To see this, consider the following lemma:
\end{lab}

\begin{lem}\label{n+1eval}%
With $S$ as in \ref{findes}, assume that $\xi_0,\dots,\xi_r\in S_R$
are pairwise different and also different from $\xi$. Let integers
$b_0,\dots,b_r\ge1$ with $\sum_{i=0}^rb_i=n+1$ be given. Then the
determinant of the matrix
\begin{equation}\label{bigmatptsinS}%
\begin{pmatrix}
p_0(\xi_0)&\cdots&p_n(\xi_0)\\
\vdots&&\vdots\\
p_0^{(b_0-1)}(\xi_0)&\cdots&p_n^{(b_0-1)}(\xi_0)\\
\vdots&&\vdots\\
p_0(\xi_r)&\cdots&p_n(\xi_r)\\
\vdots&&\vdots\\
p_0^{(b_r-1)}(\xi_r)&\cdots&p_n^{(b_r-1)}(\xi_r)
\end{pmatrix}
\end{equation}
is a nonzero element of~$R$.
\end{lem}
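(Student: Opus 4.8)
The plan is to realise the matrix \eqref{bigmatptsinS} as a specialisation of the matrix $T_\ulb(\ulp)$ from Theorem~\ref{summazing}, and then to extract from the explicit identity \eqref{identsummazing} that the resulting determinant is, up to a sign, a sum of positive elements of $R$.

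First I would reorder the points: since $\xi_0,\dots,\xi_r$ are pairwise different and $t$ is injective on the (small) interval $S$, the values $t(\xi_0),\dots,t(\xi_r)$ are pairwise different, and after relabelling — which only permutes blocks of rows in \eqref{bigmatptsinS}, hence changes the determinant by a sign — I may assume $t(\xi_0)<\cdots<t(\xi_r)$. Then I would take the block decomposition $B_0,\dots,B_r$ of $\{0,\dots,n\}$ with $|B_\nu|=b_\nu$, form the $(n+1)\times(n+1)$ matrix $T_\ulb(\ulp)$ over $A_r$ of Theorem~\ref{summazing}, and specialise it along the $\R$-algebra homomorphism $\psi\colon A_r\to R$, $a_0\otimes\cdots\otimes a_r\mapsto a_0(\xi_0)\cdots a_r(\xi_r)$. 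Since $\psi\comp\varphi_i$ is evaluation at $\xi_i$, the rows \eqref{rowsofn2} of $T_\ulb(\ulp)$ are sent to the rows of \eqref{bigmatptsinS}, up to nonzero scalar factors in $\R$ coming from the Taylor process. Hence it suffices to show $\psi(\det T_\ulb(\ulp))\ne0$.

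Applying $\psi$ to \eqref{identsummazing}, the element $\psi(\det T_\ulb(\ulp))$ equals, up to a factor in $\R^*$,
$$\sum_{w,\alpha}\left(\prod_{i=0}^r t(\xi_i)^{\alpha(B_i)}\right)\bigl(1+h_{w,\alpha}(\xi_0,\dots,\xi_r)\bigr)\left(\prod_{i=0}^r w'(\xi_i)^{b_i(b_i-1)/2}\right)\prod_{0\le i<j\le r}\bigl(w(\xi_i)-w(\xi_j)\bigr)^{b_ib_j},$$
the sum running over $w\in\itLambda$ and over the monomials $x^\alpha$ of $\sigma_\ulm(x)$. Now I would verify, using the transfer principle from $\R$ to the real closed field $R$ together with conditions (S1)--(S3) (and a sufficiently small choice of the interval $S$ in \ref{findes}, which is compatible with (S0)--(S3) because $t'(\xi)=1$ and $w'(\xi)=1$), that for every $w\in\itLambda$ and all $i\ne j$: $t(\xi_i)>0$ (as $t\ge0$ on $S$ and $\xi_i\ne\xi$, with $t$ strictly monotone on $S$); $w'(\xi_i)>0$; $1+h_{w,\alpha}(\xi_0,\dots,\xi_r)>0$ (here $h_{w,\alpha}=\bfmu(g_{w,\alpha})$, so its value at $(\xi_0,\dots,\xi_r)$ is the value of $g_{w,\alpha}$ at the point of $C^{n+1}$ carrying $\xi_\nu$ in the slots of $B_\nu$, all of which lie in $S_R\subset Q_R$); and $0<w(\xi_0)<\cdots<w(\xi_r)$, because $w$ is strictly increasing along $S$ (as $w'>0$ there), $w(\xi)=0$, and $t(\xi_0)<\cdots<t(\xi_r)$.

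Consequently every summand above is a nonzero element of $R$ with sign $(-1)^{\sum_{0\le i<j\le r}b_ib_j}$, and this sign is the same for all $w$ and $\alpha$, since $w(\xi_i)-w(\xi_j)<0$ for $i<j$ while all the remaining factors are strictly positive. As $\itLambda\ne\emptyset$ and $\sigma_\ulm\ne0$, the index set is nonempty, so the sum is a nonzero multiple of a sum of positive elements of $R$; hence $\psi(\det T_\ulb(\ulp))\ne0$, and therefore $\det$ of \eqref{bigmatptsinS} is a nonzero element of $R$. I expect the main obstacle to be precisely this final sign analysis: arranging the ordering of the $\xi_i$ and the block decomposition, keeping track of exactly where (S1)--(S3) and the transfer to $R$ enter, and observing that the common sign $(-1)^{\sum_{i<j}b_ib_j}$ depends only on $\ulb$.
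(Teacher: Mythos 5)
Your proposal is correct and follows essentially the same route as the paper: specialize $T_\ulb(\ulp)$ along the evaluation homomorphism $a_0\otimes\cdots\otimes a_r\mapsto\prod_i a_i(\xi_i)$ and read off the sign of each summand of \eqref{identsummazing} from (S0)--(S3). The only (cosmetic) difference is that the paper labels the points so that $w(\xi_0)>\cdots>w(\xi_r)$, making every factor strictly positive outright, whereas your increasing ordering forces the extra observation that all summands share the common sign $(-1)^{\sum_{i<j}b_ib_j}$ — both are fine.
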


\begin{proof}
On the interval $S$, each $w\in\itLambda$ is strictly increasing as a
function of $t$, by hypothesis (S3) in \eqref{Fidenty}. So we may
label the $\xi_i$ in such a way that
$$w(\xi_0)\>>\>w(\xi_1)\>>\>\cdots\>>\>w(\xi_r)$$
for each $w\in\itLambda$.
Consider the matrix $T_\ulb(\ulp)$ over $A_r$ from Theorem
\ref{summazing}, with parameters $\ulb=(b_0,\dots,b_r)$ as given
above. The matrix \eqref{bigmatptsinS} is the image of
$T_\ulb(\ulp)$ under the ring homomorphism
\begin{equation}\label{ringhomanr}%
A_r\to R,\quad a_0\otimes\cdots\otimes a_r\mapsto
\prod_{i=0}^ra_i(\xi_i)
\end{equation}
Hence the determinant of \eqref{bigmatptsinS} is the image of the sum
\eqref{identsummazing} under this ring homomorphism. But from
conditions (S0)--(S3) it follows that every factor in a typical
summand
$$\bfmu(t^\alpha)\cdot(1+h_{w,\alpha})\cdot\prod_i
\varphi_i(w')^{b_i(b_i-1)/2}\cdot\prod_{i<j}
\delta_{ij}(w)^{b_ib_j}$$
of \eqref{identsummazing} maps to a \emph{strictly positive} number
in $R$ under \eqref{ringhomanr}. This proves the lemma.
\end{proof}

\begin{prop}\label{genaunzeros}%
$f$ as in \ref{fwithnzeros} has precisely $n$ zeros ($\ne\xi,\,\xi'$)
in $S_R$, counting with multiplicities. In particular, $n$ is even.
\end{prop}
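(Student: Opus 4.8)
The plan is to combine the lower bound already available from Corollary \ref{kor2extstrahl} with an upper bound coming straight out of Lemma \ref{n+1eval}. By Corollary \ref{kor2extstrahl} the extreme‑ray generator $f$ has at least $n$ zeros in $S_R$, counted with multiplicity, and by the reductions in \ref{fwithnzeros} we may assume $f(\xi)>0$ and $f(\xi')>0$; in particular $\xi$ and $\xi'$ are not among the zeros of $f$. So it remains to show that $f$ cannot have $n+1$ or more zeros in $S_R$.

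Suppose it does. Since $f$ is a nonzero regular function on the irreducible curve $C$, its zero set in $S_R$ is finite; list the distinct zeros as $\xi_0,\dots,\xi_r$, all lying in $S_R\setminus\{\xi,\xi'\}$, with multiplicities $\ord_{\xi_i}(f)$. Because the total multiplicity is at least $n+1$ and each individual multiplicity is a positive integer, we may pick integers $b_i\ge1$ with $b_i\le\ord_{\xi_i}(f)$ for each $i$ and $\sum_{i=0}^r b_i=n+1$. Now recall that, by hypothesis (A2) from \ref{allgsetup}, the module $\Omega_{A/\R}$ is freely generated by $dt$, so by Lemma \ref{ikermugen} the element $t-t(\eta)$ is a local uniformizer at every point $\eta\in C(R)$. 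Consequently, for $p\in V_R$ the conditions $p(\xi_i)=p'(\xi_i)=\cdots=p^{(b_i-1)}(\xi_i)=0$ are equivalent to $\ord_{\xi_i}(p)\ge b_i$. Writing $f=\sum_{l} c_l p_l$ in the basis $\ulp=(p_0,\dots,p_n)$, the choice $b_i\le\ord_{\xi_i}(f)$ therefore means precisely that the vector $(c_0,\dots,c_n)$ is annihilated by the $(n+1)\times(n+1)$ matrix \eqref{bigmatptsinS} attached to the data $\xi_0,\dots,\xi_r$ and $b_0,\dots,b_r$.

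But Lemma \ref{n+1eval} applies to exactly this matrix — the $\xi_i$ are pairwise distinct and distinct from $\xi$, and $\sum b_i=n+1$ — and asserts that its determinant is a nonzero element of $R$. Hence the matrix is invertible, its kernel is zero, and $f=0$, contradicting $f\ne0$. Therefore $f$ has at most $n$, and hence exactly $n$, zeros in $S_R$ counted with multiplicity. Finally, since $f\ge0$ on $S_R$ and neither endpoint $\xi,\xi'$ is a zero, every zero of $f$ lies in the interior of $S_R$ and, $f$ not changing sign there, has even multiplicity; so $n$ is even. The one point that needs a little care — rather than a genuine obstacle — is the translation in the second paragraph between vanishing of $\tfrac{d}{dt}$‑derivatives and order of vanishing, which is where hypothesis (A2) (via Lemma \ref{ikermugen}) is essential; everything else is bookkeeping plus the already‑proven Lemma \ref{n+1eval}.
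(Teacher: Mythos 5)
Your proof is correct and rests on the same key input as the paper's: Lemma \ref{n+1eval} applied to a hypothetical system of $n+1$ zeros (legitimate because $f(\xi)>0$ and $f(\xi')>0$ keep the endpoints out of the zero set). The only difference is the endgame: the paper adjoins an auxiliary function $q$ and invokes Proposition \ref{simplema} for the enlarged space $V_R\oplus Rq$ to reach a contradiction with uniqueness, whereas you observe directly that the coefficient vector of $f$ in the basis $\ulp$ lies in the kernel of the invertible matrix \eqref{bigmatptsinS}, which is the more economical route; your explicit justification, via (A2) and Lemma \ref{ikermugen}, that vanishing of the first $b_i$ many $\frac{d}{dt}$-derivatives at $\xi_i$ is equivalent to $\ord_{\xi_i}(f)\ge b_i$ is exactly the point that makes this translation legitimate.
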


\begin{proof}
Assume to the contrary that $f$ has at least $n+1$ zeros in $S_R$
(different from $\xi,\,\xi'$). Then there are
$\xi_0,\dots,\xi_r\in S_R$, pairwise different, together with
integers $b_0,\dots,b_r\ge1$ such that $\ord_{\xi_i}(f)\ge b_i\ge1$
for $i=0,\dots,r$ and $\sum_{i=0}^rb_i=n+1$. By Lemma \ref{n+1eval},
the determinant of the matrix \eqref{bigmatptsinS} is nonzero. Choose
an arbitrary element $q\in A$ that is linearly independent from
$p_0,\dots,p_n$. Consider the extended tuple
$\tilde\ulp=(q,p_0,\dots,p_n)$ and form the matrix of size
$(n+2)\times(n+2)$ over $A$ with rows
$$\tilde\ulp,\ \tilde\ulp(\xi_0),\dots,\tilde\ulp^{(b_0-1)}(\xi_0),\
\dots,\ \tilde\ulp(\xi_r),\dots,\tilde\ulp^{(b_r-1)}(\xi_r)$$
Deletion of row $0$ and column $0$ in this matrix gives back the
matrix \eqref{bigmatptsinS}, whose determinant is $\ne0$. By
Proposition \ref{simplema}, therefore, the unique element (up to
scaling) in $V_R\oplus Rq=\spn(\tilde\ulp)_R$ with at least the given
zeros is this determinant, and it is not contained in $V_R$. But by
assumption, $f\in V_R$ is another nonzero element with at least
these zeros, contradicting uniqueness and thereby proving Proposition
\ref{genaunzeros}.
\end{proof}

\begin{lab}
Let $f$ span an extreme ray in $(P_{V,S})_R$, with $f$ as in
\ref{fwithnzeros}. As was just proved, $n$ is even and $f$ has
precisely $n$ zeros in $S_R$. Let these be $\xi_1,\dots,\xi_r$,
pairwise different, with respective (even) multiplicities
$b_1,\dots,b_r\ge2$, so $\sum_{i=1}^rb_i=n$. Write
$\ulb=(b_1,\dots,b_r)$ and $\ulxi=(\xi_1,\dots,\xi_r)$ and form the
matrix $Z=Z_\ulb(\ulp,\ulxi)$ over $A_R=A\otimes R$ as in
\eqref{bigmatbieq2}, with rows
$$\ulp,\ \ulp(\xi_1),\,\dots,\,\ulp^{(b_1-1)}(\xi_1),\,\dots,\,
\ulp(\xi_r),\,\dots,\,\ulp^{(b_r-1)}(\xi_r).$$
The determinant $\det(Z)$ is an element of $V_R$ with at least the
same zeros in $S_R$ as $f$. Choose any $\xi_0\in S_R$ that is not in
$\{\xi,\xi_1,\dots,\xi_r\}$. Then $\det(Z)$, evaluated at $\xi_0$, is
nonzero by Lemma \ref{n+1eval},
showing in particular that $\det(Z)$ is not identically zero. By
Proposition \ref{simplema}, therefore, $f=c\cdot\det(Z)$ with some
nonzero scalar $c\in R$. For calculating $\sosx f^\otimes(\xi_0)$
with $\xi_0\in S_R$,
we may assume $f=\det(Z)$.

Consider the matrix $T=T_{(1,\ulb)}(\ulp)$ with
$(1,\ulb):=(1,b_1,\dots,b_r)$, see \eqref{rowsofn}. So the blocks for
the Taylor process are
\begin{equation}\label{blox}%
B_0=\{0\},\ B_1=\{1,\dots,b_1\},\ B_2=\{b_1+1,\dots,b_1+b_2\}
\text{ etc},
\end{equation}
and $T$ is the $(n+1)\times(n+1)$ matrix over $A_r$ with rows
$$\varphi_0(\ulp),\ \varphi_1(\ulp),\,\dots,\,
\varphi_1(\ulp^{(b_1-1)}),\,\dots,\,\varphi_r(\ulp),\,\dots,\,
\varphi_r(\ulp^{(b_r-1)}),$$
up to nonzero scalar factors in~$\R$.
The matrix $Z$ arises from $T$ by applying the homomorphism
\begin{equation}\label{Ar2AotR}%
A_r\mapsto A\otimes R,\quad a_0\otimes\cdots\otimes a_r\>\mapsto\>
a_0\otimes\bigl(a_1(\xi_1)\cdots a_r(\xi_r)\bigr).
\end{equation}
By definition, the tensor evaluation of $f$ in $\xi_0\in S_R$ is the
image of $f=\det(Z)\in A\otimes R$ under
\begin{equation}\label{AotR2RotR}%
A\otimes R\to R\otimes R,\quad q\otimes a\mapsto q(\xi_0)\otimes a.
\end{equation}
So altogether, $f^\otimes(\xi_0)$ is the image of $\det(T)\in A_r$
under the ring homomorphism $A_r\to R\otimes R$,
\begin{equation}\label{Ar2RR}%
a_0\otimes\cdots\otimes a_n\>\mapsto a_0(\xi_0)\otimes
\bigl(a_1(\xi_1)\cdots a_r(\xi_r)\bigr).
\end{equation}
\end{lab}

\begin{lab}\label{endofpf}%
Theorem \ref{summazing} gives an expression \eqref{identsummazing}
for $\det(T)$, from which we get $f^\otimes(\xi_0)$ by applying the
homomorphism \eqref{Ar2RR} to the summands. Taking a typical summand
\begin{equation}\label{typsummand}%
\bfmu(t^\alpha)\cdot(1+h_{w,\alpha})\cdot\prod_{0\le i\le r}
\varphi_i(w')^{b_i(b_i-1)/2}\cdot\prod_{0\le i<j\le r}
\delta_{ij}(w)^{b_ib_j}
\end{equation}
in \eqref{identsummazing}, let us discuss its image under
\eqref{Ar2RR}, factor by factor:
\begin{itemize}
\item
$\bfmu(t^\alpha)=\bfmu(t_0^{\alpha_0}\cdots t_n^{\alpha_n})$ is
mapped to
$$t(\xi_0)^{\alpha_0}\otimes\Bigl(t(\xi_1)^{\alpha(B_1)}\cdots
t(\xi_r)^{\alpha(B_r)}\Bigr)$$
(with $B_1,\dots,B_r$ as in \eqref{blox} and
$\alpha(B_\nu)=\sum_{i\in B_\nu}\alpha_i$). By (S2) in
\ref{findes}, this is a tensor in $R\otimes R$ of the form
$a_1\otimes a_2$ with $a_1,\,a_2\ge0$.
\item
Let $\calO$ be the convex hull of $\R$ in $R$, a valuation ring of
$R$ with residue field $\R$. Let moreover $\omega\mapsto\ol\omega$
denote the natural residue map $\calO\otimes\calO\to\R$. The factor
$1+h_{w,\alpha}$ in \eqref{identsummazing} is mapped to an element
$\omega\in\calO\otimes\calO$ for which $\ol\omega>0$ in $\R$, by
(S1) in \ref{findes}. Such $\omega$ satisfies $\sosx(\omega)=1$,
according to \cite[Prop.~3.5]{sch:sxdeg}.
\item
$\varphi_i(w')$ is mapped to $w'(\xi_0)\otimes1$ (for $i=0$) or to
$1\otimes w'(\xi_i)$ (for $1\le i\le r$), respectively. In either
case, $w'(\xi_i)\ge0$ by (S3) in \ref{findes}.
\item
For $1\le i<j\le r$, $\delta_{ij}(w)^{b_ib_j}$ is mapped to an
element $1\otimes a$ with $a>0$, since $b_ib_j$ is even.
\item
\emph{The critical factor is} $\prod_{j=1}^r\delta_{0j}(w)^{b_j}$.
Under \eqref{Ar2RR} it is mapped to
\begin{equation}\label{mappd2}%
\prod_{j=1}^r\Bigl(w(\xi_0)\otimes1-1\otimes w(\xi_j)
\Bigr)^{b_j}.
\end{equation}
Since $b_1,\dots,b_r$ are even numbers with $\sum_{j=1}^rb_j=n$, this
is the square of a product of $\frac n2$ factors, each of the form
$w(\xi_0)\otimes1-1\otimes c_i$ with $c_i\in R$
($i=1,\dots,\frac n2$). Therefore, \eqref{mappd2} is equal to
$(a_0\otimes b_0+\cdots+a_{n/2}\otimes b_{n/2})^2$ with
$a_\nu,\,b_\nu\in R$.
In particular, the tensor \eqref{mappd2} has $\sosx\le1+\frac n2$.
\end{itemize}
All factors in the preceding list, except for the last, have
$\sosx\le1$. Their product therefore has $\sosx\le1+\frac n2$, see
\cite[Lemma~3.4(c)]{sch:sxdeg}. Since this holds for every summand
$w,\alpha$, it follows that
$$\sosx f^\otimes(\xi_0)\>\le\>1+\frac n2,$$
which finally completes the proof of the main theorem.
\qed
\end{lab}

%===================================================================%


\begin{thebibliography}{9999}

\bibitem{av}
G.~Averkov:
Optimal size of linear matrix inequalities in semidefinite approaches
to polynomial optimization.
SIAM J. Appl.\ Algebra Geom.\ \textbf{3}, 128--151 (2019).

\bibitem{avsch}
G.~Averkov, C.~Scheiderer:
Convex hulls of monomial curves, and a sparse positivstellensatz.
Math.\ Prog., published online 2024.

\bibitem{ega4}
A.~Grothendieck:
\'El\'ements de g\'eom\'etrie alg\'ebrique~IV, Quatri\`eme partie.
Publ.\ math.\ I.H.\'E.S.\ \textbf{32}, 5--361 (1967).

\bibitem{Mat}
H.~Matsumura:
\emph{Commutative Ring Theory}.
Cambridge University Press, Cambridge, 1987.

\bibitem{sch:curv}
C.~Scheiderer:
Semidefinite representation for convex hulls of real algebraic
curves.
SIAM J.~Appl\ Algebra Geom.\ \textbf{2}, 1--25 (2018).

\bibitem{sch:hn}
C.~Scheiderer:
Spectrahedral shadows.
SIAM J.~Appl\ Algebra Geom.\ \textbf{2}, 26--44 (2018).

\bibitem{sch:sxdeg}
C.~Scheiderer:
Second-order cone representation for convex sets in the plane.
SIAM J. Appl.\ Algebra Geom.\ \textbf{5}, 114--139 (2021).

\bibitem{Sch}
C.~Scheiderer:
\emph{A Course in Real Algebraic Geometry. Positivity and Sums of
Squares}.
Graduate Texts in Mathematics \textbf{303}, Springer Nature, Cham,
2024.

\bibitem{St2}
R.~Stanley:
\emph{Enumerative Combinatorics}, Volume~2.
Cambridge Studies in Advanced Mathematics \textbf{62}

\end{thebibliography}
\end{document}